\setlist[enumerate]{itemsep=3pt,topsep=3pt}
\setlist[enumerate,1]{label=\rm{(\roman*)}}
\tikzset{
  bigblack/.style={circle, draw=black!100,fill=black!100,thick, inner sep=1.5pt, minimum size=6mm},
  medblack/.style={circle, draw=black!100,fill=black!100,thick, inner sep=1.5pt, minimum size=4mm},
  blackcirc/.style={circle, draw=black!100,thick, inner sep=1.5pt, minimum size=6mm},  
}
\renewcommand\subsection{\@startsection{subsection}{2}%
  \z@{-.5\linespacing\@plus-.7\linespacing}{.5\linespacing}%
  {\normalfont\bfseries}}
\date{}
\newtheoremstyle{case}{}{}{\normalfont}{}{\itshape}{:}{ }{}
\theoremstyle{plain}
\newtheorem*{thm*}{Theorem}
\newtheorem{thm}{Theorem}
\Crefname{thm}{Theorem}{Theorems}
\numberwithin{thm}{section}
\newtheorem*{lem*}{Lemma}
\newtheorem{lem}[thm]{Lemma}
\Crefname{lem}{Lemma}{Lemmas}
\newtheorem*{claim*}{Claim}
\newtheorem{claim}[thm]{Claim}
\crefname{claim}{Claim}{Claims}
\Crefname{claim}{Claim}{Claims}
\newtheorem{prop}[thm]{Proposition}
\Crefname{prop}{Proposition}{Propositions}
\newtheorem{cor}[thm]{Corollary}
\crefname{cor}{Corollary}{Corollaries}
\newtheorem{conj}[thm]{Conjecture}
\crefname{conj}{Conjecture}{Conjectures}
\Crefname{qn}{Question}{Questions}
\Crefname{obs}{Observation}{Observations}
\Crefname{ex}{Example}{Examples}
\theoremstyle{definition}
\Crefname{prob}{Problem}{Problems}
\Crefname{defn}{Definition}{Definitions}
\theoremstyle{remark}
\newtheorem{rem}[thm]{Remark}
\Crefname{rem}{Remark}{Remarks}
\xpatchcmd{\proof}{\itshape}{\normalfont\proofnamefont}{}{}
\newcommand{\proofnamefont}{}
\renewcommand{\proofnamefont}{\bfseries}
\numberwithin{equation}{section}
\numberwithin{equation}{section}
\newcommand{\A}{\mathcal{A}}
\newcommand{\B}{\mathcal{B}}
\newcommand{\N}{\mathbb{N}}
\newcommand{\R}{\mathbb{R}}
\newcommand{\F}{\mathcal{F}}
\renewcommand{\S}{\mathcal{S}}
\newcommand{\eps}{\varepsilon}
\newcommand{\col}[2]{\mathcal{C}(#1,#2)} %colex
\newcommand{\lex}[3]{\mathcal{L}(#1,#2,#3)}
\newcommand{\cl}[2]{[#1]^{(#2)}}
\DeclareMathOperator{\lexx}{lex}
\begin{document}

\title[Lagrangians of Hypergraphs II: When colex is best]{Lagrangians of Hypergraphs II: \\ When colex is best}
\author{Vytautas Gruslys \and Shoham Letzter \and Natasha Morrison}

\address{Department of Pure Mathematics and Mathematical Statistics,        University of Cambridge, Wilberforce Road, CB3 0WB Cambridge,        UK;}\email{v.gruslys|morrison@dpmms.cam.ac.uk}

\address{ETH Institute for Theoretical Studies, ETH, 8092 Zurich}\email{shoham.letzter@eth-its.ethz.ch}

\thanks{The second author is supported by Dr.~Max
        R\"ossler, the Walter Haefner Foundation and by the ETH Zurich Foundation. The third author is supported by a research fellowship from Sidney Sussex College, Cambridge.}

    \begin{abstract}
		A well-known conjecture of Frankl and F\"{u}redi from 1989 states that an initial segment of the colexicographic order has the largest Lagrangian of any $r$-uniform hypergraph with $m$ hyperedges. We show that this is true when $r=3$. We also give a new proof of a related conjecture of Nikiforov for large $t$ and a counterexample to an old conjecture of Ahlswede and Katona.
    \end{abstract}

\maketitle

\section{Introduction}

	The notion of the Lagrangian of a graph was originally introduced in 1965 by Motzkin and Straus~\cite{MotStr} to provide a beautiful new proof of Tur\'{a}n's theorem. This concept was later generalised to uniform hypergraphs, where the study of Lagrangians has played an important role in the advancement of our understanding of hypergraph Tur\'{a}n problems. Notably, hypergraph Lagrangians were used by Frankl and R\"{o}dl~\cite{FraRod} to disprove a conjecture of Erd\H{o}s~\cite{Erd} on jumps of hypergraph Tur\'{a}n densities. See, for example, \cite{HefKee,BenNorYep,BraIrwJia} and the excellent survey of Keevash~\cite{Kee} for further applications. 
		
	In many of these results, the Tur\'{a}n problem can be converted into the problem of determining (or finding good bounds for) the Lagrangian of a particular hypergraph. In this paper we are interested in determining the maximum value, over all $r$-uniform hypergraphs (which we call \emph{$r$-graphs}) with a given number of edges, of the Lagrangian, which we will now define. 
	
	For a set $X$, let $X^{(r)}:= \{X \subseteq \N: |X| = r\}$. Let $G \subseteq \N^{(r)}$ be a finite $r$-graph. We define the \emph{Lagrangian} of $G$ to be
	\begin{equation}\label{def:lag}
		\lambda(G) 
		:= \max \left\{\sum_{e \in E(G)}\prod_{x \in e}w(x): w(x) \in \R^{\ge 0} \text{ for all } x \in \N, \text{ and } \sum_{x \in \N}w(x) = 1\right\}.
	\end{equation}
	
	Observe that this maximum always exists by compactness. When $r=2$ (i.e.\ $G$ is a graph), it is not difficult to see that $\lambda(G)$ is obtained by spreading the weight equally over the vertices of a clique of maximum size in $G$ (see, e.g.\ Motzkin and Straus \cite{MotStr}). But for $r \ge 3$, it is generally hard to determine the Lagrangian of a given $r$-graph.

	A conjecture of Frankl and F\"{u}redi~\cite{FraFur} from 1989 states that for any $r$-graph $G$ with $|G| = m$, 
	\begin{equation}\label{conj:frankl-furedi}
		\lambda(G) \le \lambda(\col{m}{r}),
	\end{equation}
	where $\col{m}{r}$ is the family containing the first $m$ sets in the colexicographic order, in which $A < B$ if and only if $\sum_{i \in A} 2^i < \sum_{i \in B} 2^i$.
	
	The result of Motzkin and Straus~\cite{MotStr} confirms the truth of this conjecture when $r=2$. In \cite{Us1}, we disproved this conjecture by finding an infinite family of counterexamples for all $r \ge 4$. In this paper we resolve the remaining case, that is, when $r=3$. Our main theorem is the following.
	
	\begin{thm}\label{thm:r3}
		Let $m$ be sufficiently large. Then for every $3$-graph $G$ with $m$ edges  
		\[	
			\lambda(G) \le \lambda(\col{m}{3}).
		\]
	\end{thm}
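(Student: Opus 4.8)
The plan is to reduce the problem to hypergraphs on a bounded vertex set and then carry out an exact extremal comparison with the colex structure. First I would take a would-be counterexample $G$, fix an optimal weighting $w$ realising $\lambda(G)$, and let $S$ be its support. Since only edges contained in $S$ contribute to $\sum_{e}\prod_{x\in e}w(x)$, we have $\lambda(G)=\lambda(G[S])$ and $|E(G[S])|\le m$, so it suffices to bound $\lambda(H)$ over $3$-graphs $H$ on a set $S$ of size $s$ with at most $m$ edges that admit a strictly positive optimal weighting. Two standard reductions tidy this up: a weight-transfer argument lets me assume $H$ is \emph{covering} (every pair of $S$ lies in an edge), and the Lagrange/KKT stationarity conditions give that every vertex link $L_v$ has the same weighted size, $\sum_{\{a,b\}\in L_v}w_aw_b=3\lambda(H)$ for each $v\in S$.

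The second step is to bound $s$. The covering condition already yields $\binom{s}{2}\le 3|E(H)|\le 3m$, hence $s=O(\sqrt{m})$. Writing $\binom n3\le m<\binom{n+1}{3}$, the target satisfies $\lambda(\col m3)\ge\lambda(\cl n3)=\tfrac{(n-1)(n-2)}{6n^2}$, which is within $O(1/n)$ of $\tfrac16$; combined with the elementary bound $\lambda(H)\le\tfrac16\bigl(1-\sum_x w_x^2\bigr)$ this forces $\sum_x w_x^2\le 3/n+o(1/n)$, so the weight is spread over $\Omega(n)$ vertices. The crux of this step is to upgrade the polynomial bound $s=O(\sqrt{m})$ to the essentially linear bound $s\le n+O(1)$. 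I would argue by stability: since each link captures pair-weight $3\lambda\approx\tfrac12$ while the total available pair-weight $\tfrac12(1-\sum_x w_x^2)$ is at most $\tfrac12$, every link must be an almost-complete weighted graph, so the high-weight vertices form a near-complete $3$-graph using up almost all of the edge budget $m\approx\binom n3$; an edge-counting argument then shows that only $O(1)$ further vertices can carry positive weight.

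Having reduced to $s\le n+O(1)$, the final step is an exact optimisation. Here the extremal configuration is driven to a clique $\cl n3$ together with one extra vertex whose link is a graph $F$ with $p:=m-\binom n3$ edges. For a fixed weighting with $w_1\ge w_2\ge\cdots$, the contribution of the extra vertex is governed by $\sum_{\{a,b\}\in F}w_aw_b$, which among all graphs with $p$ edges is maximised by taking the $p$ pairs of largest weight, that is, the colex graph; this is a rearrangement argument of Kruskal–Katona type. Iterating such edge- and weight-shifting moves while keeping the edge count fixed, and checking that the base is forced to be a clique, should show that colex is locally, and hence globally, optimal on this bounded vertex set.

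The main obstacle is the support bound of the middle step: the inequality to be proved is tight, so the soft bounds $\lambda(H)\le\tfrac16(1-\sum_x w_x^2)$ and $\binom s2\le 3m$ throw away exactly the $O(1/n)$ of room that matters, and ruling out the intermediate regime $n+O(1)<s=O(\sqrt{m})$ genuinely requires stability rather than counting. A secondary difficulty is that the concluding comparison must be exact for \emph{every} residue $p$ — in particular for the awkward middle range where the colex link $F$ is only a partial star — so the shifting moves must be shown never to decrease $\lambda$. The hypothesis that $m$ is large is precisely what lets the $O(1/n)$ gaps dominate the unavoidable discrete corrections in these comparisons.
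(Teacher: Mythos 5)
Your first step (restricting to the support, covering via weight-transfer, and the stationarity condition $w(L_v)=3\lambda$) is standard and fine, but both of the steps that carry the actual weight of the theorem have genuine gaps. The support bound is the first: upgrading $s=O(\sqrt m)=O(n^{3/2})$ to $s\le n+O(1)$ is not a routine stability argument but a hard theorem in its own right (for $r=3$ it is Talbot's theorem; in this paper it is \Cref{thm:t-vs}, proved in the companion paper \cite{Us1}). Your sketch does not bridge it: stationarity says every positively-weighted vertex's link captures pair-weight $3\lambda\approx\tfrac12$, but this is a statement about \emph{weighted} links and gives no control on the number of vertices of very small positive weight --- exactly these are the obstruction, and your covering count only yields $s=O(n^{3/2})$. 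Moreover, the exact comparison you plan afterwards needs the clean bound $s\le t$ with no additive slack (together with left-compression, cf.\ \Cref{cor:t-vs-compressed}); an unspecified $O(1)$ surplus of support vertices would re-open the comparison you are trying to close.

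The second gap is fatal as stated. Your rearrangement claim --- that for a fixed decreasing weighting the $p$ heaviest pairs form the colex graph $\col{p}{2}$ --- is false: the set of $p$ heaviest pairs depends on $w$, and for skewed weights (say $w_1\gg w_2=\cdots=w_n$) it is the lex-like star of pairs through vertex $1$, not colex. Colex beats lex only for near-uniform weights, and then only at \emph{second} order: in the range left open by \Cref{thm:main1}, namely $m=\binom t3-a$ with $a\le t-2$, every near-clique candidate has weight $\lambda(\cl t3)-a/t^3$ to first order, and competing structures are separated only at order $a^2/t^4$ --- precisely the order that soft shifting/local-move arguments discard, as you yourself concede about your step 2. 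This lex/colex tension is the heart of the problem, and it is why the paper does not argue by local moves: it proves the second-order expansion \Cref{lem:eval-lag}, which reduces the question to maximising the sum of degrees squared $P_2(\overline G)$ of the complement, characterises those maximisers in \Cref{prop:i-1-intersecting-families} (a star on an $(r-1)$-set, \emph{or} a subgraph of the clique $\cl{r+1}{r}$), and handles larger $a$ by a quantitative swap-plus-twin-averaging argument (\Cref{thm:non-edges-big-range}). A litmus test confirms the gap: nothing in your outline is specific to $r=3$, yet the identical statement is false for every $r\ge4$ \cite{Us1}, with counterexamples living exactly in the clique branch of the dichotomy of \Cref{prop:i-1-intersecting-families} that colex-shifting moves cannot detect; even for $r=3$ the residues $a\in\{3,4\}$ resist the paper's own machinery and are quoted from Talbot and Tang--Peng--Zhang--Zhao. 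So any proof along your lines must break, and this is where it does.
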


	Before discussing the proof of \Cref{thm:r3}, let us briefly summarise the body of previous work that built up to it.
	When discussing results in support of Frankl and F\"uredi's conjecture \eqref{conj:frankl-furedi}, it is helpful to simultaneously consider the ranges $\binom{t}{r} \le m < \binom{t+1}{r}$, for $t \in \N$ (where $t$ is sometimes taken to be sufficiently large). This is a sensible division, as when $m = \binom{t}{r}$, the colex graph $\col{m}{r}$ is the clique $\cl{t}{r}$. 
	
	The first progress on Frankl and F\"uredi's conjecture was made in 2002 by Talbot, who proved the conjecture in the case $r = 3$ and and $\binom{t}{3} - 2 \le m \le \binom{t+1}{3} - (2t-1)$ for any $t \in \N$. More progress was made by Tang, Peng, Zhang and Zhao~\cite{TanPen,TanPen2}, who extended this range to $\binom{t}{3}-4 \le m \le \binom{t+1}{3} - (\frac{3t}{2} - 1)$.
	The range was further increased by Tyomkyn~\cite{Tyo} and Lei, Lu and Peng~\cite{LLP} who respectively increased it to all but the largest $t + O(t^{3/4})$ and $t + O(t^{2/3})$ values of $m$ in the interval $[\binom{t}{3}, \binom{t+1}{3}]$. In \cite{Us1}, we proved the following.

	\begin{thm}[\cite{Us1}]\label{thm:main1}		
		Let $r \ge 3$, let $t \in \N$ be large, and let $m$ satisfy $\binom{t-1}{r} \le m \le \binom{t}{r} - \binom{t-2}{r-2}$. Let $G$ be an $r$-graph with $m$ edges. Then $\lambda(G) \le \lambda(\col{m}{r})$.	
	\end{thm}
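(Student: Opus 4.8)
The plan is a double induction: on the uniformity $r$ (the base case $r=2$ being the Motzkin--Straus theorem~\cite{MotStr}, which gives exactly \eqref{conj:frankl-furedi}), and, for each fixed $r$, on the number of edges $m$. Fix an optimal weighting $w$ of $G$, write $p_G(w):=\sum_{e\in E(G)}\prod_{x\in e}w(x)$ so that $\lambda(G)$ is the maximum in \eqref{def:lag}, and let $S=\{x:w(x)>0\}$ be its support. Two standard reductions come first. Since the colex-compression $C_{ij}$ (replacing each edge $e\ni j$, $e\not\ni i$ by $(e\setminus\{j\})\cup\{i\}$ whenever the latter is not already present) never decreases the Lagrangian, I may assume $G$ is shifted. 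And if some pair $\{u,v\}\subseteq S$ lies in no edge of $G$, then $p_G$ is linear along the segment $w(u)+w(v)=\mathrm{const}$, so I may push all of its weight onto one endpoint without loss; iterating, I obtain an optimal $w$ that is non-increasing, whose support is an initial segment $S=[n]$, and in which every pair of $[n]$ is covered by an edge.

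If $n\le t-1$ the result is immediate: only edges inside $[n]$ contribute, so by monotonicity of $\lambda$ under edge addition $\lambda(G)=\lambda(G[[n]])\le\lambda(\cl{n}{r})$, and since $\binom{n}{r}\le\binom{t-1}{r}\le m$ and colex initial segments are nested, $\lambda(\cl{n}{r})=\lambda(\col{\binom{n}{r}}{r})\le\lambda(\col{m}{r})$. The substance is therefore the case $n\ge t$, and here I would peel off the top vertex $n$. Writing $s=w(n)$ and $d=\deg_G(n)$, and splitting edges according to whether they contain $n$, one gets
\[
\lambda(G)=p_{G-n}(w)+s\,p_{G_n}(w)\le (1-s)^r\,\lambda(G-n)+s(1-s)^{r-1}\,\lambda(G_n),
\]
where $G-n$ is an $r$-graph with $m-d$ edges and the link $G_n$ is an $(r-1)$-graph with $d$ edges. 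Induction on the number of edges bounds $\lambda(G-n)\le\lambda(\col{m-d}{r})$, and induction on $r$ bounds $\lambda(G_n)\le\lambda(\col{d}{r-1})$. The target $\lambda(\col{m}{r})$ I would bound from below by an explicit two-level weighting of the colex graph--weight $b$ on the top vertex $t$ and weight $a$ on each vertex of $[t-1]$--exploiting that $\col{m}{r}-t=\cl{t-1}{r}$ (because $m\ge\binom{t-1}{r}$) and that the link $(\col{m}{r})_t=\col{d^\ast}{r-1}$ with $d^\ast=m-\binom{t-1}{r}$. Matching the upper bound for $G$ against this lower bound for the colex graph then becomes a one-variable optimisation in the top weight.

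The crux, and the step I expect to be hardest, is exactly this final matching, and both range hypotheses enter here. The upper bound $m\le\binom{t}{r}-\binom{t-2}{r-2}$ is equivalent, via $\binom{t-1}{r}+\binom{t-2}{r-1}=\binom{t}{r}-\binom{t-2}{r-2}$, to $d^\ast\le\binom{t-2}{r-1}$; that is, the colex link $\col{d^\ast}{r-1}$ fits inside the clique $\cl{t-2}{r-1}$. This keeps the $(r-1)$-dimensional subproblem safely below its own clique threshold--precisely the regime in which the conjecture holds and in which the counterexamples of \cite{Us1} for $r\ge4$ do not intrude--and is what makes the induction on $r$ legitimate. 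The lower bound $m\ge\binom{t-1}{r}$, in turn, is what guarantees that the colex graph already contains the full clique $\cl{t-1}{r}$, so that its decomposition has the clean shape above.

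The genuinely delicate points, which is where I expect the real work to lie, are threefold. First, I must control a general top-degree $d$ in $G$: as $d$ grows the link term grows while the deletion term shrinks, and I have to show the combined bound is maximised at the colex value $d=d^\ast$, reducing everything to a univariate inequality that must be verified throughout the stated range. Second, both inductions supply colex bounds only for edge counts lying in admissible ranges, whereas $m-d$ and $d$ can land in the near-clique ``gaps'' just below a binomial coefficient; these must be bypassed separately, using the easier boundary fact that the clique maximises the Lagrangian among graphs whose size is exactly a binomial coefficient. Third, I must check that the simple two-level weighting is close enough to optimal for $\col{m}{r}$ to absorb the slack introduced by replacing the link's own optimal weighting by a uniform one on $[t-1]$. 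Pinning down this last comparison, and showing the univariate inequality survives across the whole interval $\binom{t-1}{r}\le m\le\binom{t}{r}-\binom{t-2}{r-2}$, is the heart of the argument.
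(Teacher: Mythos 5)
You propose a double induction with the vertex-deletion bound $\lambda(G)\le\max_s\big[(1-s)^r\lambda(G-n)+s(1-s)^{r-1}\lambda(G_n)\big]$, but this is exactly where the argument breaks --- not merely where it is delicate. First note that throughout the stated range $\lambda(\col{m}{r})=\lambda(\cl{t-1}{r})$: since $d^\ast=m-\binom{t-1}{r}\le\binom{t-2}{r-1}$, the link of $t$ in $\col{m}{r}$ lies inside $[t-2]^{(r-1)}$, so no edge contains both $t-1$ and $t$, weight can be shifted off one of them, and the edges at $t$ contribute nothing to the Lagrangian. Now evaluate your bound at the colex split $d=d^\ast$, $m-d=\binom{t-1}{r}$, where both inductive inputs are as favourable as possible:
\[
F(s):=(1-s)^r\,\lambda(\cl{t-1}{r})+s(1-s)^{r-1}\,\lambda(\col{d^\ast}{r-1}),
\qquad
F'(0)=\lambda(\col{d^\ast}{r-1})-r\,\lambda(\cl{t-1}{r}).
\]
Since $r\lambda(\cl{t-1}{r})=\binom{t-2}{r-1}(t-1)^{-(r-1)}$, while at the endpoint $d^\ast=\binom{t-2}{r-1}$ one has $\lambda(\col{d^\ast}{r-1})=\lambda(\cl{t-2}{r-1})=\binom{t-2}{r-1}(t-2)^{-(r-1)}$, we get $F'(0)>0$; the same holds for all $d^\ast\ge(1-\Theta(1/t))\binom{t-2}{r-1}$. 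Hence $\max_s F(s)>\lambda(\col{m}{r})$ near the top of the range $m\le\binom{t}{r}-\binom{t-2}{r-2}$: your upper bound strictly overshoots the target \emph{even when $G$ is the colex graph itself}. The loss is intrinsic to replacing $p_{G-n}(w)$ and $p_{G_n}(w)$ by their separate maxima (the two truncated weightings cannot both be extremal simultaneously), and no refinement of the two-level lower bound for the colex graph can absorb it, because $\lambda(\col{m}{r})$ genuinely equals $\lambda(\cl{t-1}{r})$ here. The ``univariate matching inequality'' you defer to the end is therefore false on part of the interval, not just unverified.

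Separately, the induction on $r$ is not available in the form you invoke it. The bound $\lambda(G_n)\le\lambda(\col{d}{r-1})$ is the full Frankl--F\"uredi statement for $(r-1)$-graphs, which is false for $r-1\ge 4$ for infinitely many $d$ just below binomial values (the counterexamples of \cite{Us1}), and for $r-1=3$ it is the main theorem of the present paper, known only for large $m$ --- circular as an ingredient here. Your range hypothesis controls $d^\ast$, the colex top degree, but gives no a priori control of the actual degree $d=\deg_G(n)$, which can land anywhere in $[0,\binom{n-1}{r-1}]$, including the near-clique gaps; the boundary fact you cite (cliques maximise at exact binomial sizes) does not cover those gaps, and falling back on the weaker bound $\lambda(G_n)\le\lambda(\cl{t'}{r-1})$ (which is available via \Cref{thm:t-vs}) only widens the overshoot above. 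For comparison: the present paper does not prove \Cref{thm:main1} at all --- it is imported from the companion paper \cite{Us1}, where it is established by an entirely different, non-inductive route: the bounded-support theorem (\Cref{thm:t-vs}), reduction to a left-compressed $G\subseteq[t]^{(r)}$ (\Cref{cor:t-vs-compressed}), and direct perturbation estimates on a maximal weighting (the machinery echoed in \Cref{prop:prelims-old,prop:prelims-new}) comparing $G$ with the clique $\cl{t-1}{r}$. That global comparison, rather than edge-by-edge deletion of a top vertex, is what avoids the loss identified above.
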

 
	This theorem contains all cases of Frankl and F\"uredi's conjecture confirmed by earlier work (except for small values of $t$, and the case $r = 3$ and $m = \binom{t}{3} - a$, where $a \in [4]$).
	The most noteworthy such work, apart from the above progress on the $r = 3$ case, is a paper by Tyomkyn~\cite{Tyo} who proved the conjecture for $r \ge 4$ and `almost all' values of $m$. Other relevant works on the case $r \ge 4$ include Nikiforov~\cite{Nik}, Lu~\cite{Lu} and Lei and Lu~\cite{LeiLu}. See \cite{Us1} for a more detailed overview of their work. 
	
	Our main theorem applies for all $r \ge 3$ and, together with \Cref{thm:main1}, significantly extends the range of values of $m$ for which \eqref{conj:frankl-furedi} is known to hold. 
	
	\begin{thm}\label{thm:range}
		Let $r \ge 3$, let $t \in \N$ be sufficiently large, let $r+2 \le a \le t - (r-1)$ and set $m := \binom{t}{r} - a$. Suppose that $G$ maximises the Lagrangian among $r$-graphs with at most $m$ edges. Then $G$ is isomorphic to the colex graph $\col{m}{r}$. 
	\end{thm}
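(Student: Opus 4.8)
The plan is to reduce to deleting a few edges from a clique, and then to show that the colex deletion is optimal by combining an exchange inequality with a perturbative analysis of the optimal weighting.

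\emph{Reduction to $\cl{t}{r}\setminus F$.} Since $\lambda$ is monotone under adding edges, a maximiser $G$ has exactly $m$ edges, and we may pass to the restriction of $G$ to the support $U$ of an optimal weighting $w$. As $a\le t-(r-1)$ is far smaller than $\binom{t-1}{r-1}=\binom{t}{r}-\binom{t-1}{r}$, we have $m>\binom{t-1}{r}$, so $|U|\ge t$. For the reverse bound, note that $\col{m}{r}\supseteq\cl{t-1}{r}$ in this range, giving the easy lower bound $\lambda(G)\ge\lambda(\col{m}{r})\ge\lambda(\cl{t-1}{r})$; combined with the standard fact that an optimal weighting admits no vertex whose link is contained in another's, a short dilution estimate rules out $|U|>t$. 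Hence we may take $G=\cl{t}{r}\setminus F$ with $|F|=a$, and the task becomes: over all $F$ with $|F|=a$, the maximiser of $\lambda(\cl{t}{r}\setminus F)$ is obtained by deleting the $a$ colex-largest $r$-sets.

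\emph{Optimal weighting and exchange.} The first-order (KKT) conditions for \eqref{def:lag} give that on the support every vertex has the same weighted link sum $\sum_{e\ni i}\prod_{j\in e\setminus i}w_j=r\lambda(G)$; since each vertex lies in at least $\binom{t-1}{r-1}-a$ edges, the weighting has full support. The crucial elementary observation is the exchange inequality: if $e\in G$ and $f\notin G$ then $\prod_{i\in e}w_i\ge\prod_{i\in f}w_i$, since otherwise replacing $e$ by $f$ would strictly increase the objective at $w$. Thus $F$ is exactly the set of the $a$ edges of smallest $w$-product; ordering the vertices by decreasing weight this makes $F$ compressed (closed under replacing a vertex by one of smaller weight), and it remains to identify this self-consistent optimum with colex.

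\emph{The crux: concentration equals colex.} Writing $w_i=1/t+\delta_i$ and expanding $\lambda(\cl{t}{r}\setminus F)$ to second order in $\delta$, the linearised KKT system shows that a vertex's optimal weight decreases with the number of deleted edges through it, so the weight order is governed by the deletion-degree sequence of $F$. The first-order loss $\lambda(\cl{t}{r})-\lambda(\cl{t}{r}\setminus F)$ equals $a/t^r$ for every $F$; the decisive second-order term measures how much weight can be shifted off the damaged vertices, and I would show it is minimised by making the deletion-degree sequence as concentrated (majorisation-maximal) as possible. This optimum is the star consisting of all deleted edges through the common lowest-weight $(r-1)$-set $T=\{t-r+2,\dots,t\}$, which is precisely the colex configuration. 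Here the hypothesis $a\le t-(r-1)$ is essential: it is exactly the threshold below which the $a$ colex-largest sets all contain $T$, so that this star is feasible and the nearly-tied product comparisons obstructing larger $a$ never arise; the hypothesis $a\ge r+2$ removes the small-deletion regime in which a competing, more symmetric weighting could tie with colex. I expect this perturbative optimisation to be the main obstacle, precisely because the optimal weights are only $O(1/t)$ away from uniform, so that the exchange inequalities are nearly tight and one must control $w$ to second order and prove \emph{strict} majorisation-monotonicity to obtain both optimality and uniqueness.

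\emph{Uniqueness and conclusion.} Strictness in the previous step shows that any non-colex $F$ yields a strictly smaller Lagrangian, so the maximiser is unique up to isomorphism and equals $\col{m}{r}$. The same analysis evaluates $\lambda(\col{m}{r})$ through the nested structure of the links — the link of the lowest-weight vertex is again a colex $(r-1)$-graph with $a$ deleted edges — which also indicates an alternative route by induction on $r$, with base case $r=2$ supplied by Motzkin and Straus.
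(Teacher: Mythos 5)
Your plan is half right: the small-$a$ half of your argument is essentially the paper's. Your second-order expansion is precisely Lemma~\ref{lem:eval-lag}, which gives $\lambda(G) = \mu_0 - a t^{-r} + \frac{1}{2\mu_2 t^{2(r-1)}}\sum_x e(x)^2 - \frac{r^2a^2}{2\mu_2 t^{2r-1}} + O(a^3t^{-3r+4})$, so that a maximiser of the Lagrangian must (nearly) maximise the sum of squared deletion-degrees; your ``concentration equals colex'' step is \Cref{prop:i-1-intersecting-families}, whose clique alternative (complement inside $\cl{r+1}{r}$) is exactly what the hypothesis $a \ge r+2$ excludes, and $a \le t-(r-1)$ is indeed what lets the $(r-1)$-star fit inside $[t]$ --- you identified the role of both hypotheses correctly. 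Two caveats on the reduction: your ``short dilution estimate'' bounding the support by $t$ vertices is not short --- it is \Cref{thm:t-vs}, a substantial Talbot-type theorem imported from the companion paper --- and your exchange inequality only shows $F$ is compressed with respect to the weight order, which is fine but is supplied in the paper by \Cref{cor:t-vs-compressed}.

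The genuine gap is that the perturbative argument cannot cover the full range $r+2 \le a \le t-(r-1)$; you acknowledge the expansion is delicate, but the problem is quantitative and fatal. The error term is $O(a^3 t^{-3r+4})$, while the second-order differences you must detect between competing configurations can be as small as $\Theta(a\, t^{-2(r-1)})$: comparing the full star with a star missing one leaf plus one stray edge changes $\sum_x e(x)^2$ only by $\Theta(a)$. These two quantities are already comparable when $a^2 = \Theta(t^{r-2})$, i.e.\ $a \approx \sqrt{t}$ for $r=3$ --- far below $t-(r-1)$ --- so for polynomially large $a$ your ``strict majorisation-monotonicity to second order'' would be adjudicating differences smaller than the uncontrolled third-order error. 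This is precisely why the paper uses the expansion only for tiny $a$ (Corollary after \Cref{cor:complement-max-degrees-squared}, with $a = o(t^{1/3})$, applied when $a \le t^{0.01}$) and handles $t^{0.01} \le a \le t-(r-1)$ by a structurally different, non-perturbative argument (\Cref{thm:non-edges-big-range}): compress $G$, build a comparison graph $H$ of the same size whose non-edges all contain $I=\{t-(i-1),\ldots,t\}$ and in which two carefully chosen vertices $x,y$ are twins, bound the swap loss by $O\bigl(t^{-\gamma/r}\, a^2 t^{-2(r-1)}\bigr)$ via crude one-sided estimates, and then gain $w(N_H(x,y))\cdot\bigl(\frac{w(x)-w(y)}{2}\bigr)^2 = \Omega\bigl(t^{-\gamma/20r}\, a^2 t^{-2(r-1)}\bigr)$ by averaging the twins' weights --- a win that needs no exact expansion. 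Without some analogue of this large-$a$ mechanism (or the known results for $a \le 4$, $r = 3$, which the paper also invokes), your single-regime plan fails once $a$ exceeds roughly $t^{(r-2)/2}$.
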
	

	Observe that \Cref{thm:r3} follows immediately from this and \Cref{thm:main1}, using the aforementioned results of Talbot~\cite{Tal} and Tang, Peng, Zhang, and Zhao~\cite{TanPen} who proved the statement when $m = \binom{t}{r} - a$, for $a \in \{1,2,3,4\}$.
	
	Thus Frankl and F\"uredi's conjecture is resolved; it holds for $r \in \{2,3\}$ and is false for all $r \ge 4$. However, Nikiforov~\cite{Nik} noted that \eqref{conj:frankl-furedi} does not provide an explicit expression for $\lambda(\col{m}{r})$; in light of this he made the following conjecture and proved it for $3 \le r \le 5$ and sufficiently large $m$, using analytic arguments.
	
	\begin{conj}[Nikiforov~\cite{Nik}]
	\label{conj:nik}
		Let $r \ge 3$. If $m = \binom{x}{r}$, for some real $x$ which satisfies $x \ge r-1$, then
		$\lambda(m,r) \le m  x^{-r}$, with equality if and only if $x \in \N$. 
	\end{conj}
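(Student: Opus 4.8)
The plan is to isolate the equality case and then reduce everything to an analytic comparison. Write $\phi_r(x) := \binom{x}{r}x^{-r} = \tfrac{1}{r!}\prod_{i=0}^{r-1}(1-i/x)$, so that the assertion reads $\lambda(m,r) \le \phi_r(x)$ whenever $m = \binom{x}{r}$ and $x \ge r-1$. Note that $\phi_r$ is smooth and strictly increasing on $(r-1,\infty)$. When $x = t \in \N$ we have $m = \binom{t}{r}$, and placing weight $1/t$ on each vertex of the clique $\cl{t}{r}$ gives $\lambda(\cl{t}{r}) = \binom{t}{r}t^{-r} = \phi_r(t)$; Maclaurin's inequality applied to the elementary symmetric polynomial $e_r(w)$ shows this weighting is optimal and that $\cl{t}{r}$ is the extremal graph for $m=\binom{t}{r}$, so equality holds exactly at the integers. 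It remains to establish the strict inequality $\lambda(m,r) < \phi_r(x)$ for $t < x < t+1$.

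For the strict inequality I see two routes. For $r = 3$ one can lean directly on \Cref{thm:r3}, which gives $\lambda(m,3) = \lambda(\col{m}{3})$ for all large $m$, reducing the problem to the concrete comparison $\lambda(\col{m}{3}) \le \phi_3(x)$ and exploiting the explicit structure of the colex graph (a clique $\cl{t}{3}$ together with a cone over a colex $2$-graph). For general $r$, where colex need not be extremal, the engine I would use is induction on $r$, with base case $r=2$ the Motzkin–Straus identity $\lambda(m,2) = \tfrac12(1 - 1/\omega) \le \phi_2(x)$ (here $\omega \le x$ is the clique number). For the inductive step, fix an extremal $G$ with $m$ edges and an optimal weighting $w$ supported on a set $S$. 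The first-order optimality conditions give $\partial_v\big(\sum_e\prod_{u\in e}w_u\big) = r\lambda$ for every $v \in S$, and the left-hand side is precisely the value at $w$ of the link $L_v$, an $(r-1)$-graph with $\deg(v)$ edges. Renormalising $w$ to the support of $L_v$ (of total mass $W_v \le 1-w_v$) and invoking the inductive hypothesis yields
\[
	r\lambda \;=\; \partial_v\Big(\sum_e \prod_{u\in e} w_u\Big) \;\le\; W_v^{\,r-1}\,\phi_{r-1}(x_v) \;\le\; (1-w_v)^{r-1}\,\phi_{r-1}(x_v),
\]
where $x_v$ is defined by $\binom{x_v}{r-1} = \deg(v)$. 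A check on $\cl{t}{r}$ with uniform weights (so $W_v = 1-1/t$ and $x_v = t-1$) shows this whole family of bounds is simultaneously tight, which in particular certifies that the factor $(1-w_v)^{r-1}$ is indispensable.

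The crux, and the step I expect to be the main obstacle, is to aggregate the $|S|$ per-vertex inequalities into the single bound $\lambda \le \phi_r(x)$. I would argue by contradiction: assuming $\lambda > \phi_r(x)$, the per-vertex inequalities force each $x_v$ to be large, and since $\deg(v) = \binom{x_v}{r-1}$ with $\binom{\cdot}{r-1}$ increasing, this produces a lower bound on $\sum_{v\in S}\deg(v) = rm$ which, after feeding in $\sum_{v\in S} w_v = 1$, the estimate $|S| \ge x$, and the identity $r\binom{x}{r} = (x-r+1)\binom{x}{r-1}$, strictly exceeds $rm$ — the desired contradiction. Concretely this becomes a constrained optimisation over the pairs $(w_v, x_v)$ subject to $\sum_v w_v = 1$ and $\sum_v \binom{x_v}{r-1} = rm$, whose extremum is the symmetric clique configuration $w_v = 1/t$, $x_v = t-1$; proving that this configuration is the unique extremum is exactly where the convexity estimates and the real work reside, and it simultaneously delivers the equality characterisation (every link a clique $\cl{t-1}{r-1}$ under uniform weight, i.e.\ $G = \cl{t}{r}$ and $x = t$). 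For $r=3$ this aggregation can be circumvented entirely: by the colex reduction above it suffices to bound $\lambda(\col{m}{3})$, which I would do by establishing that $m \mapsto \phi_3(x(m))$ is strictly concave and that $\lambda(\col{m}{3})$ lies below the chord of $\phi_3$ joining consecutive clique values, the delicate part there being a sufficiently precise estimate of $\lambda(\col{m}{3})$ near the clique endpoints to extract strictness for non-integer $x$.
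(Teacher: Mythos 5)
Your outline contains a genuine gap at exactly the point you flag as ``the crux''. The per-vertex link inequalities $r\lambda \le (1-w_v)^{r-1}\binom{x_v}{r-1}x_v^{-(r-1)}$ are fine (they follow from \Cref{lem:easy}~\ref{itm:lag-w-nb} plus the inductive hypothesis), but the aggregation step --- the claim that the constrained optimisation over pairs $(w_v, x_v)$ with $\sum_v w_v = 1$ and $\sum_v \binom{x_v}{r-1} = rm$ has its unique extremum at the symmetric clique configuration --- is asserted, not proved, and it is precisely the hard analytic content of the problem. This is essentially Nikiforov's original strategy, and with arguments of this type he only reached $3 \le r \le 5$; nothing in your sketch indicates how to push the convexity analysis through for general $r$. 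There is also a structural flaw in the induction itself: the links $L_v$ can have arbitrarily few edges no matter how large $m$ is (low-degree vertices of small weight), so the inductive step needs the conjecture at level $r-1$ for \emph{all} $m$, whereas the statement being proved here (and every tool in this paper) holds only for $t$ sufficiently large; for $r \ge 4$ you would have to import Lu's full result, which defeats the purpose. Two smaller points: Maclaurin gives $\lambda(\cl{t}{r}) = \binom{t}{r}t^{-r}$ but does \emph{not} show $\cl{t}{r}$ is extremal among all $r$-graphs with $\binom{t}{r}$ edges --- that is the content of \Cref{thm:main1} --- and your $r=3$ fallback rests on unproven chord/concavity estimates for $\lambda(\col{m}{3})$ near the clique endpoints.

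For contrast, the paper's proof needs no induction on $r$ and no aggregation. It takes a maximiser $G$, places it on $[t]$ vertices by \Cref{thm:t-vs}, writes $m = \binom{t}{r} - a$, and disposes of $a \ge \binom{t-2}{r-2}$ (and $a=0$) via \Cref{thm:main1}. For $0 < a < \binom{t-2}{r-2}$ it plays two one-line estimates against each other: by \Cref{prop:prelims-new}~\ref{itm:useful-w-t-large} the minimum weight is $\Omega(t^{-1})$, so each of the $a$ non-edges costs $\Omega(t^{-r})$, giving $\lambda(G) \le \lambda(\cl{t}{r}) - \Omega(at^{-r})$; meanwhile, writing $x = t-\eps$ and using $a \le \eps\binom{t}{r-1}$, a direct expansion gives $mx^{-r} \ge \lambda(\cl{t}{r}) - O(at^{-(r+1)})$. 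The factor-$t$ separation between the loss and the slack closes the argument and yields strictness for every $a \ge 1$, which is the entire equality characterisation. In short: your proposal identifies the right first-order structure but leaves the decisive step as an unproven optimisation claim with known obstructions, while the paper's structural machinery reduces the whole statement to a comparison of $\Omega(at^{-r})$ against $O(at^{-(r+1)})$.
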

	
	This conjecture was recently proven for all $r$ and $m$ by Lu~\cite{Lu} using a different analytic approach to that of Nikiforov. We have an independent\footnote{We uploaded to arXiv a preliminary version of this paper (see \href{https://arxiv.org/abs/1807.00793v2}{\textcolor{blue}{arXiv1807.00793v2}}), which also includes the content of our other paper \cite{Us1}, a few months after Lu's paper appeared on arXiv.} proof of this conjecture for $r \ge 3$ and sufficiently large $t$, which follows directly from the methods we use to prove \Cref{thm:range}. We include this result in an appendix; we believe that, as we use very different techniques to Lu, our proof is of independent interest. 	

	\subsection{Maximising the sum of degrees squared}\label{subsec:sqs}
	
		The proof of \Cref{thm:range} relies on exploiting the relationship between the problem of maximising the Lagrangian and the problem of maximising the sum of degrees squared. We remark that this relationship is also explored in \cite{Us1}, where we used it to find an infinite family of counterexamples to \eqref{conj:frankl-furedi} for $r \ge 4$. 

		We now define the \emph{sum of degrees squared} of a hypergraph $H$, denoted $P_2(H)$, by setting 
		\[
			P_2(H) := \sum_{x \in V(H)} d(x)^2,
		\] 
		where $d(x)$ is the degree of $x \in H$ (i.e.\ the number of edges in $H$ that contain $x$).
		We shall be interested in the following parameters, that maximise $P_2(\cdot)$ under some conditions.
		\begin{align*}
			& P_2(r,m):= \max \{P_2(H): H \subseteq \N^{(r)}, |H| = m\},  \text{ and }\\
			& P_2(r,m,t):= \max \{P_2(H): H \subseteq [t]^{(r)}, |H| = m\}.
		\end{align*}		

		We will show (see \Cref{cor:complement-max-degrees-squared} below) that,  under certain conditions, if $G$ is an $r$-graph that maximises the Lagrangians among $r$-graphs with the same size, then the complement of $G$ must be \emph{close} to maximising the sum of degrees squared. This along with \Cref{prop:i-1-intersecting-families} (stated below and proved in \Cref{sec:sum-degs-squared}) will give the proof of \Cref{thm:range} when $\binom{t}{r} - m$ is `small'.
		
		\begin{prop} \label{prop:i-1-intersecting-families}
			Let $H$ be an $r$-graph that maximises $P_2(\cdot)$ among $r$-graphs with the same number of edges. Then either $H$ is isomorphic to a subgraph of the clique $\cl{r+1}{r}$, or
			there is a set of $r-1$ vertices which is contained in every edge of $H$.
		\end{prop}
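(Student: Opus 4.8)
The plan is to recast the problem in terms of pairwise edge intersections, where the extremal behaviour becomes transparent. Write $m = |H|$ and recall that $\sum_x d(x) = rm$ is determined by $m$ alone. Double counting pairs $(\{e,f\},x)$ with $e \ne f$ and $x \in e \cap f$ gives $\sum_x \binom{d(x)}{2} = \sum_{\{e,f\}} |e \cap f|$, the sum ranging over unordered pairs of distinct edges, and hence
\[
	P_2(H) = \sum_x d(x)^2 = \sum_x d(x) + 2\sum_x \binom{d(x)}{2} = rm + 2\sum_{\{e,f\}} |e \cap f|.
\]
Therefore maximising $P_2$ over $r$-graphs with $m$ edges is equivalent to maximising the total pairwise intersection $I(H) := \sum_{\{e,f\}} |e\cap f|$.

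Since two distinct $r$-sets meet in at most $r-1$ vertices, $I(H) \le (r-1)\binom{m}{2}$. I would then observe that this bound is attained: fixing an $(r-1)$-set $C$ and taking the $m$ edges $C \cup \{w\}$ over $m$ distinct vertices $w \notin C$ produces a valid $r$-graph with $m$ edges (using that $\N$ is infinite) in which every two edges meet in exactly $r-1$ vertices. Consequently any maximiser $H$ must satisfy $I(H) = (r-1)\binom{m}{2}$, which forces $|e \cap f| = r-1$ for \emph{every} pair of distinct edges of $H$. It then remains to classify the $r$-graphs in which all pairwise intersections equal $r-1$, and to show each is either a star (an $(r-1)$-set in every edge) or a subgraph of $\cl{r+1}{r}$.

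This classification is the heart of the argument. Discarding the trivial cases $|H| \le 2$ (where $H$ is a subgraph of $\cl{r+1}{r}$ directly), fix two edges $A = C \cup \{a\}$ and $B = C \cup \{b\}$ with $A \cap B = C$, so $A \cup B$ is an $(r+1)$-set. I would split according to whether every edge contains $C$: if so, $H$ is a star and we are in the second case of the proposition. Otherwise, pick an edge $D$ with $C \not\subseteq D$; the constraints $|D \cap A| = |D \cap B| = r-1$ pin $D$ down, forcing it to omit exactly one element $c \in C$ and to contain both $a$ and $b$, so that $D \subseteq A \cup B$. The final step is to propagate this to every remaining edge $E$: if some $E$ contained a vertex $z \notin A \cup B$, then each of $A, B, D$ meets $E$ in $r-1$ vertices while avoiding $z$, which pins $E$ to the form $W \cup \{z\}$ with $W = A \setminus \{x_A\} = B \setminus \{x_B\} = D \setminus \{x_D\}$ a common $(r-1)$-subset of $A, B, D$; but then $W = A \cap B = C \subseteq D$, contradicting the choice of $D$. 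Hence every edge lies in the $(r+1)$-set $A \cup B$, placing $H$ in the first case.

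I expect the classification step to be the main obstacle: the reformulation and the extremal bound are routine, but deducing the rigid global structure from the purely local condition $|e \cap f| = r-1$ requires the careful case analysis above, and in particular the argument ruling out a vertex outside $A \cup B$ once a single edge fails to contain the core $C$.
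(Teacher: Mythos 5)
Your proposal is correct and follows essentially the same route as the paper: your identity $P_2(H) = rm + 2\sum_{\{e,f\}}|e\cap f|$ is just the unordered-pair form of the paper's $P_2(H) = \sum_{e,f\in E(H)}|e\cap f|$, and both proofs compare against the star to force $|e\cap f| = r-1$ for all distinct edges. The classification step is likewise the paper's argument in mild disguise: your explicit determination of $D = (C\setminus\{c\})\cup\{a,b\}$ matches the paper's proof that the first exceptional edge $g$ lies in $e\cup f$, and your $W$-argument ruling out a vertex $z\notin A\cup B$ reaches the same contradiction ($C\subseteq D$, i.e.\ $|e\cap f\cap g|\ge r-1$) as the paper's.
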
    
		
	   \Cref{prop:i-1-intersecting-families} characterises the $r$-graphs $H$ that satisfy $P_2(H) = P_2(r,|H|)$. We could not find such a result in the literature, but as it is a natural problem, it is not implausible that such a solution exists. Related results of Friedgut and Kahn~\cite{FriKah} consider the problem of maximising the number of labelled copies of a hypergraph over all hypergraphs with a fixed number of edges. This generalises earlier work of Alon~\cite{Alo} who considered the problem in graphs (note that the sum of degrees squared in a graph $G$ is essentially the number of labelled copies of $K_{1,2}$). 
	   
	   In contrast, the problem of characterising $r$-graphs $H$ on $t$ vertices for which $P_2(H) = P_2(r,|H|,t)$ has drawn considerable attention. For $r = 2$, Ahlswede and Katona \cite{AhlKat} and, independently (but significantly later), Olpp \cite{Olp} showed that for every $m$ and $t$ either the colex graph $\col{m}{2}$ or the lex graph $\lex{m}{t}{2}$ graph are maximisers of $P_2(H)$, among $t$-vertex graphs with $m$ edges. (The lex graph $\lex{m}{t}{r}$ is defined as follows: given sets $A, B \in [t]^{(r)}$, $A <_{\lexx} B$ if and only if $\min\{A, B\} \in A$. The graph $\lex{m}{t}{r}$ is the initial segment according to $<_{\lexx}$ of $[t]^{(r)}$ of size $m$.) 
			
		Characterising the maximisers precisely is a surprisingly delicate task (see, e.g.\ \cite{AhlKat,PelPetSte,AbrFerNeuWat}). For $r \ge 3$, even the task of calculating $P_2(r,m,t)$ seems out of reach. Ahlswede and Katona~\cite{AhlKat} conjectured that when $r=3$, the sum of degrees squared is maximised by an initial segment of lex or colex.
		We found a counterexample to their conjecture by computer search. Take 
		\[
			H:= \{123,\, 124,\, 125,\, 126,\, 127,\, 134,\, 135,\, 136,\, 145,\, 146,\, 156\} \subseteq [7]^{3}.
		\]
		One can check that $P_2(H) = 211$, while any lex $3$-graph or a complement of a lex $3$-graph (this includes all colex graphs) with $11$ edges has sum of degrees squared at most $209$. So it is not the case that for every $r,m,t$ either the corresponding colex or lex graph are maximisers of $P_2(r,m,t)$. Nevertheless, some upper bounds on $P_2(r,m,t)$ have been proved \cite{Cae,Nik07,Bey}. 

		In the next section we gather together notation, definitions and some preliminary bounds that will be used throughout the paper. The proof of \Cref{thm:range} is divided into two regimes; where $\binom{t}{r} - m$ is `large' (i.e.\ at least $t^{0.01}$), considered in Section~\ref{sec:a-small} and where it is `small', considered in~\Cref{sec:a-med}. We remind the reader that our proof of \Cref{prop:i-1-intersecting-families} is included in \Cref{sec:sum-degs-squared} and the proof of \Cref{conj:nik} is contained in \Cref{app:nik}.

\section{Preliminaries}\label{sec:prelims}

	Here we collect together notation and definitions that we shall use throughout the paper, as well as preliminary results that will be used in the proof of \Cref{thm:range}.
	In \Cref{subsec:defs} we provide notation and definitions, in \Cref{subsec:prelims-old} we state some relevant results from previous papers, and in \Cref{subsec:prelims-new} we provide additional preliminary results, along with their proofs. We note that all of the content of \Cref{subsec:defs,subsec:prelims-old} appears in \cite{Us1} together with the proofs.

	\subsection{Essential definitions} \label{subsec:defs}
		Say that $w = (w(x))_{x \in \N}$ is a \emph{weighting} of $\N$ if $w(i) \ge 0$, for all $x \in \N$ and $\sum_{x \in \N}w(x) = 1$. For an element $e \in \N^{(r)}$, define 
		\[	
			w(e):= \prod_{x \in e}w(x),
		\]
		and for a finite subset $G \subseteq \N^{(r)}$, 
		\[
			w(G):= \sum_{e \in G}w(e).
		\]
		We say that $w$ is a \emph{weighting of $[t]$} if $w$ is a weighting of $\N$ supported on $[t]$. For $G \subseteq [t]^{(r)}$, we say that $w$ is a \emph{weighting of $G$} if it is a weighting of $[t]$. We call a weighting $w$ of $\N$ \emph{maximal} for $G$ if $w(G) = \lambda(G)$. 
		Also define
		\[
			\Lambda(m,r):= \max\{\lambda(G): G \subseteq \N^{(r)}, |G| = m\}.
		\]
		So using these definitions, $\lambda(G)$ can be expressed as $\max\{w(G): w\text{ is a weighting of $\N$}\}$ and \eqref{conj:frankl-furedi} can be phrased as saying that there exists some weighting $w$ such that $w(\col{m}{r}) = \Lambda(m,r)$.  Say that a weighting $w$ of $G$ is \emph{decreasing} if $w(i) \ge w(j)$ whenever $i < j$.

		It is often convenient to work with $r$-graphs that are \emph{left-compressed}. Recall that for $1 \le x < y \le t$, the \emph{xy}-\emph{compression} of $F \in [t]^{(r)}$ is
		\[
			C_{xy}(F):= 
				\begin{cases}
					(F \setminus y) \cup x & \text{if } x \notin F, y \in F,\\
					F & \text{otherwise},
				\end{cases}
		\]
		and for $\F \subseteq [t]^{(r)}$ we define
		\[
			C_{xy}(\F):= \{C_{xy}(F): F \in \F\} \cup \{F \in \F: C_{xy}(F) \in \F\}.
		\]
		$\F$ is \emph{left-compressed} if $C_{xy}(\F) = \F$ for all $1 \le x < y \le t$.

		Let us introduce some more technical notation. For $G \subseteq [t]^{(r)}$ and $S \subseteq [t]$,
		define $N_G(S):=  \{e \setminus S: e \cup S \in G\}$; whenever $G$ is clear from the context we omit the subscript $G$. We may sometimes abuse notation and write $N(v_1,\ldots,v_s)$ when $S = \{v_1,\ldots,v_s\}$. For $x \in [t]$, define $G\setminus \{x\}$ to be the hypergraph on vertex set $[t]\setminus \{x\}$ and edge set $\{e \in G: x \notin e\}$. For vertices $x, y \in V(G)$, we define $N_y(x) := N(x) \setminus \{y\}$. 

	\subsection{Useful results from previous papers} \label{subsec:prelims-old}

		In this subsection we collect the results that we need from previous papers.

		First, we mention a few basic properties of Lagrangians proved in \cite{FraRod}.
		
		\begin{lem}[Frankl-R\"odl \cite{FraRod}]
			\label{lem:easy}
			Let $w$ be a maximal weighting of $G \subseteq [t]^{(r)}$.  Then
			\begin{enumerate}[]
				\item\label{itm:lag-w-nb} 
					For any $x \in [t]$ with $w(x) > 0$ we have 
					\[
						w(G) =  \frac{w(N(x))}{r}.
					\]
				\item\label{itm:frankl} 
					If $x,y \in [t]$ are such that $w(x), w(y) > 0$ then 
					\begin{equation} \label{eq:frankl}
						w(N(x,y)) (w(x) - w(y)) = w(N_y(x)) - w(N_x(y)).
					\end{equation} 
				\item \label{itm:lag-clique}
					$\lambda(\cl{t}{r}) = \frac{1}{t^r} \binom{t}{r}$.
			\end{enumerate}
		\end{lem}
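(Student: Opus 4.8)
The plan is to treat $f(w) := w(G)$ as a polynomial in the weights $(w(x))_{x\in[t]}$ and to exploit its two structural features: it is \emph{multilinear} (each variable occurs to degree at most one in every monomial) and \emph{homogeneous of degree $r$}. The single computation underlying everything is that the partial derivative of $f$ in the direction $w(x)$ is exactly the link weight, $\partial f/\partial w(x) = w(N(x))$, since the edges through $x$ are precisely those contributing a monomial divisible by $w(x)$. Double-counting incidences then yields the Euler-type identity $\sum_{x} w(x)\,w(N(x)) = r\,w(G)$, valid for \emph{every} weighting; I would record this first, as it drives both (i) and (ii).

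For (i), I would first show that any two vertices of positive weight have equal link weight. Given $x,y$ with $w(x),w(y)>0$, consider the perturbed weighting that transfers $\eps$ of weight from $y$ to $x$ (legitimate for small $\eps>0$ since $w(y)>0$). Writing $f$ in the form $w(N(x,y))\,w(x)w(y) + w(N_y(x))\,w(x) + w(N_x(y))\,w(y) + \text{const}$, one checks that the first-order change is $\eps\bigl(w(N(x)) - w(N(y))\bigr)$. Maximality forces this to be nonpositive for both signs of the transfer, so $w(N(x)) = w(N(y))$. Hence all positive-weight vertices share a common link weight $\mu$; substituting this into the Euler identity (only positive-weight vertices contribute, and $\sum_x w(x)=1$) gives $\mu = r\,w(G)$, which is exactly the claim $w(G) = w(N(x))/r$.

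Part (ii) then falls out algebraically. Splitting the link of $x$ according to whether an edge contains $y$ gives $w(N(x)) = w(y)\,w(N(x,y)) + w(N_y(x))$, and symmetrically $w(N(y)) = w(x)\,w(N(x,y)) + w(N_x(y))$. Subtracting these and using $w(N(x)) = w(N(y))$ from (i) rearranges at once to \eqref{eq:frankl}. For (iii), I would observe that on the clique $f(w) = e_r(w(1),\dots,w(t))$ is the $r$-th elementary symmetric polynomial, so Maclaurin's inequality (equivalently AM--GM across the $\binom{t}{r}$ products) bounds it by $\binom{t}{r}(1/t)^r$, attained by the uniform weighting $w\equiv 1/t$; this gives $\lambda(\cl{t}{r}) = \binom{t}{r}/t^{r}$.

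The only genuinely delicate point is the constrained optimization in (i): one must resist naively applying Lagrange multipliers across the boundary of the simplex, and instead argue via admissible perturbations that keep all weights nonnegative, which is precisely why the statement is restricted to vertices of positive weight. The homogeneity (Euler) identity is the device that upgrades the local fact ``all positive-weight link weights are equal'' to the exact value $r\,w(G)$.
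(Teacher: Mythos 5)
Your proof is correct, and the paper itself gives no proof of this lemma --- it is quoted from Frankl and R\"odl \cite{FraRod}, whose argument is essentially the one you give: an admissible $\eps$-transfer between two positive-weight vertices forces equal link weights, the degree-$r$ Euler identity $\sum_x w(x)\,w(N(x)) = r\,w(G)$ pins down the common value, and (ii) follows by splitting each link over $y$ (resp.\ $x$). The only cosmetic difference is in (iii), where you invoke Maclaurin's inequality for $e_r$ rather than deducing the value of $\lambda(\cl{t}{r})$ from (i) via symmetry of the clique; both are one-line finishes and your handling of the simplex boundary (perturbing only positive-weight coordinates) is exactly the right care.
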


		A useful result that we proved in \cite{Us1} tells us that if $G$ maximises the Lagrangian among $r$-graphs with at most $m \le \binom{t}{r}$ edges, then $G$ has at most $t$ vertices of non-zero weight with respect to any maximal weighting.
		  
		\begin{thm}[\cite{Us1}]\label{thm:t-vs} 
			Let $r \ge 3$ and let $t \in \N$ be sufficiently large. Suppose that $G$ is an $r$-graph with at most $\binom{t}{r}$ edges that maximises the Lagrangian among $r$-graphs of the same size, and suppose that $w$ is a decreasing maximal weighting of $G$ which satisfies $w(x) > 0$ for every $x \in V(G)$. Then $G \subseteq [t]^{(r)}$.            
		\end{thm}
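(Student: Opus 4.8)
The plan is to argue by contradiction: suppose $n := |V(G)| \ge t+1$. The driving constraint is \Cref{lem:easy}\,(i), which gives $w(N(x)) = r\lambda(G)$ for every vertex $x$ (all of positive weight). Since $w$ is decreasing with $\sum_x w(x) = 1$, the minimum weight satisfies $w(n) \le 1/n \le 1/(t+1)$. First I would reduce to the top range $\binom{t-1}{r} < m \le \binom{t}{r}$: if instead $m \le \binom{t-1}{r}$, then applying the same statement to the (still large) parameter $t-1$ for the very same pair $(G,w)$ yields $G \subseteq \cl{t-1}{r} \subseteq \cl{t}{r}$, so there is nothing to prove. This is effectively an induction on $t$, leaving only the hardest range to treat.

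Second, I would pin down a lower bound for $r\lambda(G)$. As $G$ maximises the Lagrangian and $m > \binom{t-1}{r}$, the colex graph $\col{m}{r}$ contains the clique $\cl{t-1}{r}$, and $\lambda$ is monotone under adding edges, so by \Cref{lem:easy}\,(iii)
\[ \lambda(G) = \Lambda(m,r) \ge \lambda(\col{m}{r}) \ge \lambda(\cl{t-1}{r}) = \binom{t-1}{r}\big/(t-1)^{r}. \]
Using $r\binom{t-1}{r} = (t-1)\binom{t-2}{r-1}$, this gives $r\lambda(G) \ge \binom{t-2}{r-1}/(t-1)^{r-1} = (1-o(1))/(r-1)!$.

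Third, and this is the heart, I would show that on $n \ge t+1$ vertices some vertex $v$ must have $w(N(v)) < r\lambda(G)$, contradicting \Cref{lem:easy}\,(i). The idea is that $G$ carries only $m \le \binom{t}{r}$ edges spread over at least $t+1$ vertices, so its average degree $rm/n$ dips below the clique degree $\binom{t-1}{r-1}$, forcing a genuine link-deficit somewhere. For a vertex $v$ of small weight and small degree one estimates $w(N(v)) \le (1-w(v))^{r-1}\lambda(N(v)) \le (1-w(v))^{r-1}\Lambda(d(v),r-1)$, where the link $N(v)$ is an $(r-1)$-graph on $d(v)$ edges and $\Lambda(d(v),r-1)$ is controlled sharply by the $(r-1)$-uniform instances of the conjecture available through \Cref{thm:main1} (or Motzkin--Straus when $r=3$). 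The aim is to push this quantity strictly below the lower bound of the previous step.

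The main obstacle is that the clique $\cl{t}{r}$ essentially saturates every inequality in sight, so the lower bound on $r\lambda(G)$ and the upper bound on $w(N(v))$ differ only in lower-order terms; a crude average-degree estimate for $d(v)$ is off by the wrong order and does not separate them. Overcoming this demands a quantitatively tight accounting of the edge-deficit together with a choice of $v$ that is simultaneously light and of light (weighted) link. To locate such a $v$ I would exploit the Frankl relation \eqref{eq:frankl}: comparing vertices along the decreasing weight order shows that lighter vertices carry lighter links, so the minimum-weight vertex $n$ is forced to absorb the largest deficit (the weight of its full potential link is the largest among all vertices, yet must be trimmed down to the common value $r\lambda(G)$). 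Converting this concentration of the deficit onto a single light vertex into the strict inequality $w(N(n)) < r\lambda(G)$ is the delicate step, and it is precisely here that the hypotheses that $w$ is decreasing and has full support are used in earnest.
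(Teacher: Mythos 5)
There is a genuine gap, and it sits exactly where you flag it yourself. Note first that this paper does not prove \Cref{thm:t-vs} at all --- it is imported from \cite{Us1} --- so the comparison is against that source, where the result (Talbot's theorem for $r=3$, extended to all $r$) is the outcome of a lengthy compression-plus-second-order analysis. Your steps one and two are sound: the descent to the range $\binom{t-1}{r} < m \le \binom{t}{r}$ works (modulo the caveat that the induction bottoms out at the ``sufficiently large'' threshold, so bounded $m$ needs a separate remark), and $r\lambda(G) \ge r\binom{t-1}{r}/(t-1)^r$ is correct. But the contradiction $w(N(v)) < r\lambda(G)$ is never derived; you explicitly defer it as ``the delicate step'', and nothing in the sketch produces it. The obstruction is quantitative and real: by \Cref{lem:easy} every positive-weight vertex satisfies $w(N(v)) = r\lambda(G)$ exactly, so the deficit is invisible at first order. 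With $n = t+1$ and $m = \binom{t}{r}$, the minimum degree can be as large as $rm/n = \bigl(1-\tfrac{1}{t+1}\bigr)\binom{t-1}{r-1}$, while the factor $(1-w(n))^{r-1}$ in your bound $w(N(n)) \le (1-w(n))^{r-1}\Lambda(d(n),r-1)$ is \emph{closest to $1$} precisely when $w(n)$ is small --- small weight works against you here, not for you --- so the two sides of the would-be contradiction differ only in competing terms of relative order $1/t$. The Frankl relation \eqref{eq:frankl} compares links pairwise but does not by itself convert ``lighter vertices carry lighter links'' into a strict numerical separation; moreover, without first establishing something like left-compressedness (which you cannot freely impose, since the conclusion concerns the given $G$), the minimum-weight vertex need not even be the minimum-degree vertex, so the pairing of ``light'' with ``light link-count'' is itself unjustified. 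This conversion is the actual content of the theorem, and your sketch stops where that work begins.

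Separately, the tool you name for bounding $\Lambda(d(v),r-1)$ is not available in this paper's logical order. \Cref{thm:main1} covers only edge counts in the window $\bigl[\binom{t-1}{r}, \binom{t}{r}-\binom{t-2}{r-2}\bigr]$ and uniformities $r \ge 3$, so it says nothing about $\Lambda(d(v),r-1)$ for a general $d(v)$, and nothing at all when $r = 3$ (there Motzkin--Straus does suffice, as you note). For $r \ge 4$ the sharp general bound you would need is Nikiforov's conjecture in uniformity $r-1$; but in this paper that conjecture is proved (in \Cref{app:nik}) \emph{using} \Cref{thm:t-vs} itself, so invoking it is circular unless you restructure the whole argument as an induction on the uniformity with the Motzkin--Straus case as base --- a nontrivial scheme your proposal does not set up --- or import an external proof such as Lu's \cite{Lu} to break the cycle. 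So the proposal is a plausible opening frame (contradiction via $w(N(v)) = r\lambda(G)$ is indeed how one starts), but both the key quantitative step and the key auxiliary bound are missing.
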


		We note that a similar statement was simultaneously and independently proved by Lei and Lu~\cite{LeiLu}. In the case $r=3$, the corresponding result was proved by Talbot~\cite{Tal} and was used in \cite{Tal,TanPen,Tyo,LLP} to prove that \eqref{conj:frankl-furedi} holds for certain ranges.

		In fact, for the purpose of this paper, it is convenient to use the following corollary of \Cref{thm:t-vs}, proved in \cite{Us1}.

		\begin{cor} \label{cor:t-vs-compressed}
			Let $r \ge 3$, let $t \in \N$ be sufficiently large, and let $m$ satisfy $\binom{t}{r} - \binom{t-2}{r-2} < m \le \binom{t}{r}$. Suppose that $G$ maximises the Lagrangian among $r$-graphs with $m$ edges. Then $G$ is isomorphic to a left-compressed subgraph of $[t]^{(r)}$ with $m$ edges.
		\end{cor}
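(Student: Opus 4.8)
The plan is to prove the two features of the conclusion separately: that a maximiser can be placed inside $\cl{t}{r}$ (equivalently, has at most $t$ vertices), for which I would invoke \Cref{thm:t-vs}, and that it may then be taken left-compressed, for which I would use compressions and swaps. So let $G$ be a maximiser, fix a maximal weighting $w$, and — since $\lambda$ is invariant under relabelling vertices — relabel so that $w$ is decreasing, with positive-weight support $S = \{x : w(x) > 0\} = \{1, \dots, s\}$.

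First I would reduce to the support. Setting $G' := \{e \in G : e \subseteq S\}$, every edge meeting $\N \setminus S$ carries a zero factor, so $w(G') = w(G) = \lambda(G)$; since $G' \subseteq G$ gives $\lambda(G') \le \lambda(G)$, we get $\lambda(G') = \lambda(G)$. As $\Lambda(\cdot, r)$ is non-decreasing (extra edges never hurt) and $|G'| \le m$, the chain $\Lambda(m,r) = \lambda(G') \le \Lambda(|G'|, r) \le \Lambda(m, r)$ collapses, so $G'$ is itself a maximiser among $r$-graphs with $|G'|$ edges, carrying the decreasing maximal weighting $w|_S$, which is positive on every vertex of $G'$ (the condition in the first part of \Cref{lem:easy} forces each positive-weight vertex to lie in an edge of $G'$, as otherwise $w(N(x)) = 0 = r\,\lambda(G)$). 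Since $|G'| \le m \le \binom{t}{r}$, \Cref{thm:t-vs} applies and yields $G' \subseteq \cl{t}{r}$, so in particular $s = |S| \le t$.

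Next I would upgrade this to $G = G'$, i.e.\ that no edge of $G$ uses a zero-weight vertex. The clean route is strict monotonicity of $\Lambda(\cdot, r)$ on the interval in question: if some edge met $\N \setminus S$ then $|G'| < m$, yet we showed $\Lambda(|G'|, r) = \Lambda(m, r)$, contradicting strictness. Strictness here reflects the near-clique regime, where a maximiser for any $m' - 1 < \binom{t}{r}$ edges lies in $\cl{t}{r}$ with a positive maximal weighting, so adjoining one of the $\ge 1$ absent edges of $\cl{t}{r}$ raises the Lagrangian strictly. This gives $V(G) = S$, all weights positive, and $G \subseteq \cl{t}{r}$. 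For left-compression I would then run the swap argument on $G$ directly: it is standard (see \cite{FraRod}) that each $C_{xy}$ preserves $|G|$ and does not decrease $\lambda$, so $C_{xy}(G)$ is again a maximiser; moreover, for $x < y$ with $w(x) > w(y)$ and any edge $e$ with $y \in e$, $x \notin e$, replacing $e$ by $f := (e \setminus y)\cup x$ alters $w(G)$ by $w(e)\bigl(w(x)/w(y) - 1\bigr) > 0$, which would exceed $\lambda(G)$ unless $f \in G$. Hence $G$ is $C_{xy}$-compressed across every pair of distinct weight classes. For $x,y$ in a common weight class the swap is weight-neutral, so I would exploit the freedom to permute vertices inside each weight class (an isomorphism preserving $w$) to choose an order realising the remaining within-class compressions, producing an isomorphic copy of $G$ that is a left-compressed subgraph of $\cl{t}{r}$ with $m$ edges.

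The main obstacle is this last step. Because $m$ is close to $\binom{t}{r}$, the maximal weighting is essentially uniform, so nearly all vertices share a single weight and the strict swap yields no gain among them; one must instead show that one within-class relabelling simultaneously achieves \emph{all} within-class compressions. The balance identity in \Cref{lem:easy} (giving $w(N_y(x)) = w(N_x(y))$ when $w(x) = w(y)$) constrains the link structure within a class, and together with the cross-class compressedness this should pin down a consistent order; making that rigorous is the crux. If one only needs to exhibit \emph{some} left-compressed maximiser in $\cl{t}{r}$ (sufficient for the later applications, where $G$ may be replaced by any isomorphic maximiser), the tie-breaking can be bypassed: among all maximisers with $m$ edges choose one minimising the potential $\sum_{e \in G}\sum_{i \in e} 2^i$, since any nontrivial compression strictly lowers this potential while preserving maximality, the minimiser is left-compressed, lies in $\cl{t}{r}$ by the argument above, and has $m$ edges.
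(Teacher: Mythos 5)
Your architecture (restrict to the support and apply \Cref{thm:t-vs}, upgrade to $|G| = m$ edges inside $\cl{t}{r}$ via strict monotonicity of $\Lambda(\cdot,r)$ in this range, then left-compress) is the natural one, and steps one and two are essentially sound; note that the paper itself gives no proof here but imports the corollary from \cite{Us1}. The genuine gap is exactly the one you flag and then fail to close: the corollary asserts that \emph{every} maximiser is isomorphic to a left-compressed subgraph of $[t]^{(r)}$, and your argument does not deliver this. In the near-clique regime the maximal weighting is close to uniform, and by \Cref{prop:prelims-old} \ref{itm:useful-w-x} the weight $w(x)$ is governed by $e(x)$, so weight classes are typically enormous (for colex itself, all vertices outside the star centre fall into at most two classes); within a class every swap is weight-neutral, produces a new maximiser that is in general \emph{not} isomorphic to the old one, and \Cref{lem:easy} \ref{itm:frankl} yields only $w(N_y(x)) = w(N_x(y))$, which does not force the links of equal-weight vertices to be nested. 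Your potential-minimising bypass is correct but proves strictly less: that \emph{some} maximiser is a left-compressed subgraph of $\cl{t}{r}$, i.e.\ that $\Lambda(m,r) = \lambda(G_0)$ for such a $G_0$. Your claim that this weaker form suffices downstream is false precisely where the paper needs the strong form: \Cref{thm:range}, \Cref{cor:very-small-a}, \Cref{cor:complement-max-degrees-squared} and the closing corollary of \Cref{sec:a-small} all conclude that the \emph{arbitrary} maximiser $G$ is (isomorphic to) colex or left-compressed, and the opening move ``by \Cref{cor:t-vs-compressed} we may assume $G$ is left-compressed'' in the proof of \Cref{thm:non-edges-big-range} is only legitimate because the corollary speaks about every maximiser up to isomorphism. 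With the existence form alone, the whole chain degrades to the value statement $\Lambda(m,r) = \lambda(\col{m}{r})$ --- enough for the inequality in \Cref{thm:r3}, but not for the uniqueness assertions the paper actually makes.

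A secondary, fixable hole sits in your strictness argument for $\Lambda$. You adjoin ``one of the $\ge 1$ absent edges of $\cl{t}{r}$'' to a maximiser $H'$ with fewer than $\binom{t}{r}$ edges, but an absent $r$-set of $[t]$ may contain zero-weight vertices when the support $S$ of the maximal weighting is smaller than $[t]$, in which case its weight is $0$ and nothing is gained. You need a case split: if some $r$-subset of $S$ is a non-edge of $H'$, adjoin that one (strict gain, as all weights on $S$ are positive); otherwise $H' \supseteq S^{(r)}$ forces $H' = S^{(r)}$ with $|S| \le t-1$ (if $|S| = t$ there is nothing to do, since $|H'| < \binom{t}{r}$), whence $\lambda(H') \le \lambda(\cl{t-1}{r})$, and this is strictly below $\Lambda(m,r)$ because $m > \binom{t}{r} - \binom{t-2}{r-2} > \binom{t-1}{r}$ and \Cref{claim:useful-lower-H-i-one} gives $\lambda(\col{m}{r}) \ge \lambda(\cl{t-1}{r}) + \frac{b}{4(t-1)^r}$ with $b \ge 1$. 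With that repair your steps one and two stand; the within-class isomorphism step remains the real missing content relative to the statement as quoted from \cite{Us1}.
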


		The following observations follows immediately from \Cref{cor:t-vs-compressed}.
		\begin{cor} \label{cor:very-small-a}
			Let $r \ge 3$, let $t \in \N$ sufficiently large, let $a \in \{0,1,2\}$ and set $m:= \binom{t}{r} - a$. Suppose that $G$ is an $r$-graph that maximises the Lagrangian among $r$-graphs with $m$ edges. Then $G \simeq \col{m}{r}$.
		\end{cor}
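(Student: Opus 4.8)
The plan is to derive this directly from \Cref{cor:t-vs-compressed}, reducing it to a small combinatorial fact about left-compressed families whose complement in $[t]^{(r)}$ has size at most $2$.

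First I would check that the hypotheses of \Cref{cor:t-vs-compressed} are met: since $a \le 2$ and $\binom{t-2}{r-2} \ge t-2 > 2$ for large $t$, we have $\binom{t}{r} - \binom{t-2}{r-2} < m \le \binom{t}{r}$. Hence $G$ is isomorphic to a left-compressed $r$-graph $F \subseteq [t]^{(r)}$ with exactly $m = \binom{t}{r} - a$ edges, so its complement $\overline{F} := [t]^{(r)} \setminus F$ has exactly $a \le 2$ edges. It therefore suffices to show that the only left-compressed $F \subseteq [t]^{(r)}$ with $|\overline F| = a$ is the colex initial segment $\col{m}{r}$, for each $a \in \{0,1,2\}$.

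Next I would translate left-compressedness of $F$ into a monotonicity property of $\overline F$. A short argument shows that $C_{xy}(F) = F$ for all $x<y$ is equivalent to $\overline F$ being an \emph{up-set}: whenever $A \in \overline F$ and $x \in A$, $y \notin A$ with $x < y$, we also have $(A \setminus x) \cup y \in \overline F$. [Indeed, if $(A\setminus x)\cup y \in F$, then left-compression would force its compression $A$ into $F$, a contradiction.] Now $\{t-r+1, \ldots, t\}$ is the unique maximal $r$-set under this order: any $A \ne \{t-r+1,\ldots,t\}$ has $\min A \le t-r$ and misses some $z \ge t-r+1$, and replacing $\min A$ by $z$ is an up-move. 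Hence every nonempty up-set contains $\{t-r+1,\ldots,t\}$. This immediately settles $a = 0$ (then $F = \cl{t}{r} = \col{m}{r}$) and $a = 1$ (then $\overline F = \{\{t-r+1, \ldots, t\}\}$, the single colex-largest set).

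For $a = 2$, I would argue that $\overline F = \{A, \{t-r+1,\ldots,t\}\}$ where the up-closure of $A$ is exactly $\{A, \{t-r+1,\ldots,t\}\}$; since a single up-move changes one element, this forces $A$ to differ from the top set in exactly one coordinate (otherwise no single up-move reaches the top, yet one always exists), and then a direct check of the remaining up-moves pins down $A = \{t-r, t-r+2, t-r+3, \ldots, t\}$, the second-largest set in colex. In each case $\overline F$ is precisely the final segment of the colex order of size $a$, so $F = \col{m}{r}$ and hence $G \simeq \col{m}{r}$. The only mildly delicate point is the case analysis for $a = 2$, but it is an entirely finite check; everything else is immediate from \Cref{cor:t-vs-compressed}.
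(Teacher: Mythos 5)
Your proposal is correct and follows exactly the route the paper intends: the paper derives this corollary as an immediate consequence of \Cref{cor:t-vs-compressed}, the point being that a left-compressed subgraph of $[t]^{(r)}$ whose complement has size $a \le 2$ is forced to be $\col{m}{r}$. Your write-up simply makes explicit the finite check (complement is an up-set under right-shifts, hence equals the top $a$ sets in colex) that the paper leaves unstated, including the correct identification of the second-largest set $\{t-r,\,t-r+2,\ldots,t\}$ in the $a=2$ case.
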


		Finally, we quote a proposition from \cite{Us1} with some useful preliminary results. Before stating it, we need a definition.
		For a graph $G \subseteq [t]^{(r)}$ and vertex $x \in V(G)$, define $e(x)$ to be the number of edges of $\overline{G} := [t]^{(r)} \setminus G$ that contain $x$. Note that when $G$ is left-compressed we have $e(t) \ge e(t-1) \ge \ldots e(1)$. In what follows, our big-$O$ notation treats $r$ as a constant.
			
		\begin{prop} \label{prop:prelims-old} 
			Let $r \ge 3$, let $t \in \N$ be sufficiently large, let $a$ be such that $0 \le a \le \binom{t-2}{r-2}$  and set $m := \binom{t}{r} - a$.
			Suppose that $G$ is a subgraph of $[t]^{(r)}$ with at most $m$ edges, and suppose that $G$ is left-compressed and that it maximises the Lagrangian among $r$-graphs with $m$ edges. Let $w$ be a decreasing maximal weighting of $G$.
			The following statements hold.
			\begin{enumerate}[]
				\item \label{itm:useful-missing-1}
					$e(1) \le \frac{ra}{t} = O(t^{r-3})$.
				\item \label{itm:useful-w-x}
					$w(t) = w(1) - \Theta\left(\frac{e(t) - e(1)}{t^{r-1}}\right)$ and, for $x < t$, $w(x) = w(1) - \Theta\left(\frac{e(x) - e(1)}{t^{r-2}}\cdot w(t)\right)$.
			\end{enumerate}
		\end{prop}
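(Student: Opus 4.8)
The plan is to treat the two parts separately: part (i) follows from a double count in the complement $\overline{G} := [t]^{(r)}\setminus G$, while part (ii) is driven by the Frankl identity \eqref{eq:frankl} combined with the compression structure of $\overline{G}$. Throughout, since adding edges never decreases the Lagrangian, I may assume $G$ has exactly $m$ edges, so $|\overline{G}| = \binom{t}{r}-m = a$.

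For (i), counting incidences between vertices and edges of $\overline{G}$ gives $\sum_{x\in[t]} e(x) = r\lvert\overline{G}\rvert = ra$. Because $G$ is left-compressed the sequence $e(1)\le e(2)\le\cdots\le e(t)$ is nondecreasing, so $e(1)$ is at most the average, $e(1)\le ra/t$; and since $a\le\binom{t-2}{r-2} = O(t^{r-2})$, this is $O(t^{r-3})$, as claimed.

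For (ii), I would first record the regularity statement that every weight satisfies $w(v)=\Theta(1/t)$. Granting this, every $(r-1)$-set $f$ has $w(f)=\Theta(t^{-(r-1)})$, and for the pair $\{x,1\}$ the link $N(x,1)$ omits from the $\binom{t-2}{r-2}=\Theta(t^{r-2})$ subsets of $[t]\setminus\{x,1\}$ only the $\le e(1)=O(t^{r-3})$ sets $g$ with $g\cup\{x,1\}\in\overline{G}$, so $w(N(x,1))=\Theta(1)$. The engine is then \eqref{eq:frankl} taken with reference vertex $1$, which after rearranging reads
\[
	w(1)-w(x) = \frac{w(N_x(1)) - w(N_1(x))}{w(N(x,1))}.
\]
Writing each of $N_1(x)$ and $N_x(1)$ as the full family of $(r-1)$-subsets of $[t]\setminus\{1,x\}$ with the appropriate non-edges deleted, the common part cancels and the numerator becomes $w(A_x)-w(A_1)$, where $A_v := \{f\subseteq[t]\setminus\{1,x\} : |f|=r-1,\ f\cup\{v\}\in\overline{G}\}$. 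The decisive structural point is that left-compression forces $A_1\subseteq A_x$: if $f\cup\{1\}\in\overline{G}$ with $x\notin f$, then $f\cup\{x\}\in\overline{G}$ as well, since otherwise compressing $x\mapsto 1$ would place $f\cup\{1\}$ back in $G$. Hence the numerator equals $\sum_{f\in A_x\setminus A_1} w(f)$, a sum of exactly $\lvert A_x\rvert - \lvert A_1\rvert = e(x)-e(1)$ positive terms, each of size $\Theta(t^{-(r-1)})$. Dividing by $w(N(x,1))=\Theta(1)$ yields $w(1)-w(x)=\Theta\bigl((e(x)-e(1))\,t^{-(r-1)}\bigr)$. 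Taking $x=t$ gives the first formula; for $x<t$, substituting $t^{-(r-1)}=\Theta(w(t)\,t^{-(r-2)})$, which is valid because $w(t)=\Theta(1/t)$, gives the second (so the two stated forms in fact coincide up to this identification).

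The conceptual core here—the Frankl identity together with the containment $A_1\subseteq A_x$, which removes any cancellation in the numerator—is short and clean. I expect the main obstacle to be the regularity step $w(v)=\Theta(1/t)$, which is needed in both directions to obtain matching $\Theta$-bounds. The upper bound $w(1)=O(1/t)$ and the lower bound $w(t)=\Omega(1/t)$ both require quantitative use of the near-completeness of $G$ (only $a=O(t^{r-2})$ edges are missing) together with the optimality conditions of \Cref{lem:easy}, namely $w(N(v))=r\lambda(G)$ with $\lambda(G)=\Theta(1)$ (the value being controlled by monotonicity of $\lambda$ under adding edges and comparison with $\cl{t-1}{r}$). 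Bootstrapping from the crude bound $w(1)\le 1/r$ through \eqref{eq:frankl}, or invoking a stability version of Maclaurin's inequality, is the delicate part of the argument.
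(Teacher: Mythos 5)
Your part (i) is correct and complete: the incidence count $\sum_x e(x)=ra$ together with the monotonicity $e(1)\le\cdots\le e(t)$ (which you correctly derive from left-compression) gives $e(1)\le ra/t=O(t^{r-3})$. Your skeleton for (ii) — the identity \eqref{eq:frankl} with reference vertex $1$, the cancellation down to $w(A_x)-w(A_1)$, and the containment $A_1\subseteq A_x$ forced by compression so that the numerator is a sum of exactly $e(x)-e(1)$ positive terms — is the right engine. But the proposal has a genuine gap, and you have put your finger on it yourself without closing it: everything quantitative rests on the deferred ``regularity step'' $w(v)=\Theta(1/t)$ for \emph{all} $v$, in particular the lower bounds $w(t-1),w(t)=\Omega(1/t)$. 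This is not a routine loose end. Left-compression pushes the non-edges to the \emph{top} of the vertex order, so the sets $f\in A_x\setminus A_1$ are precisely the top-heavy $(r-1)$-sets — typically containing $t$ or $t-1$ — and the per-term lower bound $w(f)=\Omega(t^{-(r-1)})$ is exactly where the claim would fail if $w(t)$ were small. (The denominator is the easy part: $w(1)=O(1/t)$ plus monotonicity already give $w(x)\ge(1-o(1))/t$ for all $x\le t/2$, whence $w(N(1,x))=\Theta(1)$; no such cheap argument reaches the top vertices.)

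Moreover, assuming $w(t)=\Theta(1/t)$ at this point is structurally illegitimate, not merely unproven. The proposition is deliberately phrased with the factor $w(t)$ in the $x<t$ case precisely because no lower bound on $w(t)$ is available at this stage; your observation that ``the two stated forms in fact coincide'' under your identification should have been a warning sign rather than a simplification. In this paper's development, the bound $w(t)\ge\frac{1}{4t}\big(1+O(t^{-1})\big)$ is \Cref{prop:prelims-new} \ref{itm:useful-w-t-large}, which is proved \emph{using} the present proposition (and only under the strictly smaller range $a<\binom{t-2}{r-2}$, whereas here $a=\binom{t-2}{r-2}$ is allowed; recall that just outside this range, at $m=\binom{t-1}{r}$, the maximiser has $w(t)=0$, so any lower bound on $w(t)$ is genuinely delicate). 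Invoking it, or re-deriving it, inside a proof of the present statement is circular. The way to close the gap without a priori regularity is to exploit that $G$ maximises the Lagrangian over \emph{all} $m$-edge graphs: for every edge $e$ and non-edge $g$ one has $w(g)\le w(e)$ (this swap-optimality is exactly how the paper controls weights elsewhere, cf.\ the computation \eqref{eq:annoying-calc}), combined with the compression pairing of non-edges avoiding $t$ with non-edges containing $t$; this yields the $x<t$ estimate in its stated form, with $w(t)$ as an unexamined factor, rather than via the shortcut $t^{-(r-1)}=\Theta\big(w(t)t^{-(r-2)}\big)$. Note also that this paper contains no proof of the proposition — it is quoted from \cite{Us1} — but as written your argument does not stand on its own.
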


	\subsection{Additional useful results} \label{subsec:prelims-new}
		
		In this subsection we give another proposition with further useful preliminary results, along with its proof.

		\begin{prop}\label{prop:prelims-new}
			Let $r \ge 3$, $t \in \N$ large, $a$ such that $0 \le a < \binom{t-2}{r-2}$, and set $m := \binom{t}{r} - a$.
			Let $G \subseteq [t]^{(r)}$ maximise the Lagrangian among $r$-graph with $m$ edges, and suppose that $G$ is left-compressed. Let $w$ be a decreasing maximal weighting of $G$. Then the following properties hold.
			\begin{enumerate}[]				
				\item \label{itm:useful-w-1}
					$w(1) = \frac{1}{t} + O(a t^{-r})$.
				\item \label{itm:useful-e-t}
					$e(t) = \Omega(a)$.
				\item \label{itm:useful-w-t-large}
					$w(t) \ge \frac{1}{4t}\big(1 + O(t^{-1})\big)$.
				\item \label{itm:useful-e-t-1}
					If $\frac{7}{8} \cdot \binom{t-2}{r-2} \le a < \binom{t-2}{r-2}$ then  $e(t-1) = \Omega(a)$.
			\end{enumerate}
		\end{prop}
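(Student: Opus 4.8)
The plan is to prove the four items in the order (i), (ii), (iv), (iii): part (i) is a short averaging computation, parts (ii) and (iv) both run through the link between the Lagrangian and the sum of degrees squared of the complement, and part (iii) is the most delicate and is cleanest once the degree estimates are in hand. For (i) I start from $\sum_{x=1}^{t}w(x)=1$ (every vertex has positive weight here, since $m>\binom{t-1}{r}$ forces $w(t)>0$ and $w$ is decreasing). Writing each $w(x)$ in terms of $w(1)$ using the weight estimates in \Cref{prop:prelims-old}, summing, and using $\sum_{x}e(x)=ra$ together with $e(1)=O(t^{r-3})$ from \Cref{prop:prelims-old}, the total deviation $\sum_{x}(w(1)-w(x))=tw(1)-1$ is $O(at^{-(r-1)})$; solving gives $w(1)=\tfrac1t+O(at^{-r})$. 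A mild bootstrap (using $w(t)\le w(1)$) handles the factor $w(t)$ appearing in the error term.

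The engine for (ii) and (iv) is an identity obtained by expanding the clique Lagrangian around the uniform weighting. Writing $w(x)=\tfrac1t+\eta_x$ with $\sum_x\eta_x=0$, multilinearity of the elementary symmetric polynomial $e_r$ gives $e_r(w)=\lambda(\cl{t}{r})-\tfrac{A}{2}\sum_x\eta_x^2+\dots$ with $A=\binom{t-2}{r-2}t^{-(r-2)}=\Theta(1)$ (via \Cref{lem:easy}), while $w(\overline G)=at^{-r}+t^{-(r-1)}\sum_x e(x)\eta_x+\dots$. Since $\lambda(G)=w(G)=e_r(w)-w(\overline G)$ and $w$ is chosen optimally, optimising the resulting quadratic in $\eta$ (subject to $\sum_x\eta_x=0$) shows that, up to controlled error, maximising $\lambda(G)$ is equivalent to maximising $P_2(\overline G)=\sum_x e(x)^2$. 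As $G$ beats the colex graph, $P_2(\overline G)\ge P_2(\overline{\col{m}{r}})-o(a^2)$, and since every one of the top $a<\binom{t-2}{r-2}$ colex sets contains $t$ (indeed contains $\{t-1,t\}$), we get $P_2(\overline{\col{m}{r}})\ge a^2$ and hence $P_2(\overline G)=\Omega(a^2)$. Now $e(t)=\max_x e(x)$ gives $P_2(\overline G)\le e(t)\sum_x e(x)=e(t)\cdot ra$, so $e(t)\ge P_2(\overline G)/(ra)=\Omega(a)$, which is (ii).

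For (iv) I run the same argument one degree down. Every one of the first $\binom{t-2}{r-2}$ colex sets contains the pair $\{t-1,t\}$, so for $a<\binom{t-2}{r-2}$ the colex complement has $e(t)=e(t-1)=a$ and thus $P_2(\overline{\col{m}{r}})\ge 2a^2$. The bridge gives $P_2(\overline G)\ge 2a^2-o(a^2)$; subtracting $e(t)^2\le a^2$ leaves $\sum_{x\neq t}e(x)^2\ge a^2-o(a^2)$, and then $e(t-1)=\max_{x\neq t}e(x)\ge\big(\sum_{x\neq t}e(x)^2\big)/\big(\sum_{x\neq t}e(x)\big)\ge (a^2-o(a^2))/(ra)=\Omega(a)$. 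The hypothesis $a\ge\tfrac78\binom{t-2}{r-2}$ is exactly what forces $a\gg t^{r-3}$, so that the additive error in the bridge (which I expect to be of order $a\cdot t^{r-3}$, coming from $e(1)=O(t^{r-3})$ and the cubic terms) is $o(a^2)$ and does not swamp the subtraction.

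Finally, for (iii) I use \Cref{lem:easy} to equate the weighted links of $1$ and $t$: since $w(N(1))=w(N(t))=r\lambda(G)$, and each link is a full link minus the corresponding link in $\overline G$, this rearranges to
\[
    w(1)-w(t)=\frac{w\big(N_{\overline G}(t)\big)-w\big(N_{\overline G}(1)\big)}{e_{r-2}\big(w(2),\dots,w(t-1)\big)}.
\]
The denominator is robustly $\binom{t-2}{r-2}t^{-(r-2)}(1-o(1))=\Theta(1)$, since it is a sum of $\binom{t-2}{r-2}$ nearly-equal terms and reducing the few top weights changes only a lower-order fraction of them. Combining with (i) reduces (iii) to the bound $w(1)-w(t)\le(\tfrac34+o(1))\tfrac1t$, i.e.\ to $w\big(N_{\overline G}(t)\big)\le(\tfrac34+o(1))\binom{t-2}{r-2}t^{-(r-1)}$. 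I expect this to be the main obstacle: the naive estimate $w\big(N_{\overline G}(t)\big)\le e(t)\cdot w(1)^{r-1}$ with $e(t)\le a<\binom{t-2}{r-2}$ only yields the factor $1$, hence the useless $w(t)>w(1)-\tfrac1t$. To gain a constant factor I must exploit that the missing edges through $t$ concentrate on high-index, and therefore low-weight, vertices (as shown by (ii) and the up-closedness of $\overline G$): each such $(r-1)$-set contains a vertex of weight strictly below $w(1)$, suppressing the numerator by a factor bounded away from $1$. Making this self-consistent — the suppression depends on the very weights $w(t-1),w(t-2),\dots$ one is trying to bound — is the crux, and I would resolve it by a short bootstrap, first bounding the top weights below by $\tfrac1{4t}$ using the extremal colex (star) configuration as the worst case, then feeding this back into the numerator estimate. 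The slack between the true worst-case factor (which is $\tfrac12$ for the star complement) and the stated $\tfrac34$ absorbs both the $o(1)$ errors and the $O(t^{-1})$ in the claim.
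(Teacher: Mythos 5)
Your part (i) is fine (and genuinely different from the paper, which instead plays the uniform-weighting lower bound for $\lambda(G)$ against the link bound $\lambda(G) = w(N(1))/r \le \frac1r(1-w(1))^{r-1}\binom{t-1}{r-1}(t-1)^{-(r-1)}$; your summation of \Cref{prop:prelims-old}\ref{itm:useful-w-x} over $x$ works equally well). The serious gap is in the ``bridge'' you use for (ii) and (iv). The quadratic expansion you describe is exactly \Cref{lem:eval-lag}, and its error term is $O(a^3t^{-3r+4})$, not $O(a\cdot t^{r-3})$ as you guess: the cubic terms in the multilinear expansion of $e_r(w)$ and of $w(\overline G)$ contribute at that order (equivalently, the multiplicative estimate $w(x_1)\cdots w(x_s)=t^{-s}(1+O(at^{-(r-2)}))$ underlying the expansion becomes trivial when $a=\Theta(t^{r-2})$). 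Transferred to the $P_2$ scale, comparing $G$ with colex via the bridge yields only $P_2(\overline G)\ge P_2(\overline{\col{m}{r}})-O(a^3t^{-r+2})$, and $a^3t^{-r+2}=\Theta(a^2)$ precisely when $a=\Theta(t^{r-2})$ --- which is the \emph{entire} range of (iv), since there $\frac78\binom{t-2}{r-2}\le a<\binom{t-2}{r-2}$, and also the top of the range of (ii). (The paper itself flags this: after \Cref{lem:eval-lag} it notes the bound is only effective for $a=o(t^{r-2})$.) So your subtraction $P_2(\overline G)\ge 2a^2-o(a^2)$ in (iv) is unjustified, and (ii) is not closed at the top of its range. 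The paper avoids the bridge entirely here: (ii) comes from the telescoping lower bound $\lambda(F_a)\ge\lambda(\cl{t}{r})-\frac{a}{t^r}+\Omega(a^2t^{-2(r-1)})$ (\Cref{claim:useful-lower-H-i-two}, valid for all $a\le\binom{t-2}{r-2}$) together with $w(\overline G)\ge a\,w(t)^r$; and (iv) is proved from (i)--(iii) via the Frankl--R\"odl identity \eqref{eq:frankl}, not via $P_2$.

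Part (iii) is also not actually proved. You correctly identify the crux --- gaining a constant factor in $w\big(N_{\overline G}(t)\big)$ beyond the naive $e(t)\,w(1)^{r-1}$ --- but your proposed resolution is circular: ``first bounding the top weights below by $\frac1{4t}$ using the extremal colex (star) configuration as the worst case'' assumes both the conclusion of (iii) and an extremality transfer that is nowhere established (nothing yet shows the quasi-star complement is the worst case for the weight of $N_{\overline G}(t)$, since the suppression factor depends on the very weights $w(t-1),w(t-2),\dots$ being bounded). The paper's proof of (iii) is a global swap argument, not a link identity: writing $e(G)=\binom{t}{r}-\binom{t-2}{r-2}+b$, it lower-bounds $\lambda(G)\ge\lambda(\cl{t-1}{r})+\frac{b}{4(t-1)^r}$ via the half-weight construction of \Cref{claim:useful-lower-H-i-one}, then deletes $b$ edges containing $\{t-1,t\}$ and upper-bounds the resulting graph by $\lambda(\cl{t-1}{r})$ using \Cref{thm:main1}; comparing gives $w(t)w(t-1)\ge\frac1{4t^2}(1+O(t^{-1}))$ directly, with no self-consistent bootstrap needed. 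Without (iii) in hand, your claimed suppression factor for (iii), and hence also any attempt to salvage (iv) through weight estimates, remains unsupported.
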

			
		\begin{proof}
			Note that 
			\[	
				\lambda(G) \ge \left(\binom{t}{r} - a\right)\frac{1}{t^r},
			\]
			as this is the weight of $G$ with respect to the uniform weight on $[t]$. Moreover, using \Cref{lem:easy} \ref{itm:lag-w-nb} and \ref{itm:lag-clique} and observing that $N(1)$ is a subgraph of the complete graph $\cl{t-1}{r-1}$, we have
			\[	
				\lambda(G) = \frac{w(N(1))}{r} \le \frac{1}{r}\cdot \big(1 - w(1)\big)^{r-1} \binom{t-1}{r-1}\cdot \frac{1}{(t-1)^{r-1}}.
			\]
			Putting the two inequalities together, we find that
			\begin{equation} \label{eqn:weight-1}
				(1 - w(1))^{r-1} 
				\ge  r\cdot \frac{ (t-1)^{r-1} }{\binom{t-1}{r-1}} \cdot \left(\binom{t}{r} - a \right)\frac{1}{t^r} 
				=  \left( \frac{t-1}{t} \right)^{r-1} \left(1 - \frac{a}{\binom{t}{r}}\right).
			\end{equation}
			Hence,
			\[	
				w(1) \le 1 - \left(1 - \frac{1}{t}\right)\left(1 - \frac{a}{\binom{t}{r}}\right)^{1/(r-1)} = \frac{1}{t} + O\left(a t^{-r}\right).
			\]
			This completes the proof of \ref{itm:useful-w-1}.				
		
			Now we prove a couple of claims about the Lagrangians of colex graphs.
			\begin{claim} \label{claim:useful-lower-H-i-one}
				For $0 \le b \le \binom{t-2}{r-2}$, let $H_b := \col{\binom{t}{r} - \binom{t-2}{r-2} + b}{r}$. Then 
				\[
					\lambda(H_b) \ge \lambda(\cl{t-1}{r}) + \frac{b}{4(t-1)^r}\,.
				\]
			\end{claim}
		
			\begin{proof}
				Let $w'$ be the weighting defined by $w'(t) = w'(t-1) = \frac{1}{2(t-1)}$ and $w'(x) = \frac{1}{t-1}$ for every $x \in [t-2]$. Then 
				\[
					\lambda(H_b) \ge w'(H_b) 
					= \overline{w}(\cl{t-1}{r}) + \frac{b}{4(t-1)^2} 
					= \lambda(\cl{t-1}{r}) + \frac{b}{4(t-1)^2},
				\]
				where $\overline{w}$ is the uniform weighting on $[t-1]$. This completes the proof of the claim.
			\end{proof}

			\begin{claim} \label{claim:useful-lower-H-i-two}
				For $0 \le a \le \binom{t-2}{r-2}$, let $F_a := \col{\binom{t}{r} - a}{r}$. Then 
				\[
					\lambda(F_a) 
					\ge \lambda(\cl{t}{r}) - \frac{a}{t^r} + \Omega\left(\frac{a^2}{t^{2(r-1)}}\right).
				\]
			\end{claim}

			\begin{proof}
				Note that $F_{a+1}$ can be obtained by removing one edge from $F_{a}$; denote this edge by $f_a$, and let $w_a$ be a weighting of $[t]$ such that $w_a(F_a) = \lambda(F_a)$. Since $e(t) = a$ in $F_a$ (i.e.\ all non-edges contain $t$), it follows from \ref{itm:useful-w-1} and \Cref{prop:prelims-old} \ref{itm:useful-missing-1} and \ref{itm:useful-w-x} that $w_a(t) = w_a(1) - \Omega(a t^{-(r-1)}) = t^{-1} - \Omega(a t^{-(r-1)})$ and thus $w_a(f_a) \le w(1)^{r-1} \cdot w(t) \le t^{-r} - \Omega(a t^{-2(r-1)})$.
				Therefore
				\[	
					\lambda(F_{a+1}) 
					\ge w_a(F_{a+1}) 
					= w_a(F_a) - w_a(f_a) 
					= \lambda(F_a) - \frac{1}{t^{r}} + \Omega\!\left(\frac{a}{t^{2(r-1)}}\right).
				\]
				Hence 
				\begin{align*}
					\lambda(\cl{t}{r}) - \lambda(F_a) 
					= \sum_{b=0}^{a-1}\! \left(\lambda(F_b) - \lambda(F_{b+1})\right)  \le \sum_{b=0}^{a-1} \left(\frac{1}{t^r} - \Omega(\frac{b}{t^{2(r-1)}})\right)
					= \frac{a}{t^{r}} - \Omega\!\left(\frac{a^2}{t^{2(r-1)}}\right).
				\end{align*}
				It follows that $\lambda(F_a) \ge \lambda(\cl{t}{r}) - \frac{a}{t^r} + \Omega(\frac{a^2}{t^{2(r-1)}})$, as required.
			\end{proof}
				
			We now prove \ref{itm:useful-e-t}. As the weight of each non-edge of $G$ is at least $w(t)^r$,
				\begin{align*}
					\lambda(\cl{t}{r}) \ge w(\cl{t}{r}) = w(G) + w(\overline{G}) \ge w(G) + a \cdot w(t)^r.
				\end{align*}
			On the other hand, as $G$ maximises the Lagrangian and by \Cref{claim:useful-lower-H-i-two},			
			\begin{align*}
				w(G) 
				\ge \lambda(H_a)
				\ge \lambda(\cl{t}{r}) - \frac{a}{t^r} 
				+ \Omega\left(\frac{a^2}{t^{2(r-1)}}\right).
			\end{align*}
			It follows that $w(t)^r \le t^{-r} - \Omega(a t^{-2(r-1)})$, so $w(t) \le t^{-1} - \Omega(a t^{-(r-1)}) \le w(1) - \Omega(a t^{-(r-1)})$, using $w(1) \ge t^{-1}$. By \Cref{prop:prelims-old} \ref{itm:useful-w-x} we have $w(1) - w(t) = O(\frac{e(t) -e(1)}{t^{r-1}})$. Hence $e(t) = \Omega(a)$ as needed for \ref{itm:useful-e-t}.
				
			Now we prove \ref{itm:useful-w-t-large}. Write $e(G) = \binom{t}{r} - \binom{t-2}{r-2} + b$ (so $b \ge 1$). Note that 
			\begin{equation} \label{eqn:lag-G-wrt-i}
				\lambda(G) \ge \lambda(F_b) \ge \lambda(\cl{t-1}{r}) + \frac{b}{4(t-1)^r},
			\end{equation}
			where the first inequality holds as $G$ maximises the Lagrangian, and the second inequality follows from \Cref{claim:useful-lower-H-i-one}.
			Let $G'$ be a graph obtained from $G$ by removing any $b$ edges that contain both $t$ and $t-1$ (note that such edges exist). By \ref{itm:useful-w-1}, we have $w(1) \le \frac{1}{t}\big(1 + O(t^{-1})\big)$, hence the weight of the edges removed is at most 
			\[	
				b \cdot w(t)w(t-1)w(1)^{r-2} 
				\le b \cdot w(t)w(t-1) \cdot \frac{1}{t^{r-2}}\big(1 + O(t^{-1})\big),
			\]
			and at least 
			\[
				w(G) - w(G') \ge w(G) - \lambda(\cl{t-1}{r}) 
				\ge \frac{b}{4(t-1)^r},
			\]
			by \Cref{thm:main1} and \eqref{eqn:lag-G-wrt-i}. Combining these inequalities gives that $w(t)w(t-1) \ge \frac{1}{4t^2}\big(1 + O(t^{-1})\big)$. In particular, $w(t) \ge \frac{1}{4t}\big(1 + O(t^{-1})\big)$, as required.

			Finally we turn our attention to \ref{itm:useful-e-t-1}. By  \ref{itm:useful-w-1}-\ref{itm:useful-w-t-large} and \Cref{prop:prelims-old} \ref{itm:useful-missing-1} and \ref{itm:useful-w-x},  $w(t) \le \frac{1 - \delta}{t}$, for some constant $\delta > 0$, and we may assume that $\delta < 1/10$. If $w(t-1) \le \frac{1 - \delta/r}{t}$, then by \Cref{prop:prelims-old} \ref{itm:useful-w-x} we have $e(t-1) = \Omega(a)$, as required. So suppose otherwise. By \Cref{lem:easy} \ref{itm:frankl}, we have
			\begin{equation} \label{eqn:compare-w-t-1}
				w(1) - w(t) = \frac{w(N_t(1)) - w(N_1(t))}{w(N(1, t))} \ge (e(t) - e(1)) \cdot w(t-1)^{r-1} \cdot (r-2)!,
			\end{equation}
			where the first inequality follows since $w(t-1)^{r-1}$ is a lower bound on the weight of any $(r-1)$-tuple in $[t] \setminus \{1, t\}$ and $w(N(1, t)) \le \lambda(\cl{t-2}{r-2}) \le \frac{1}{(r-2)!}$. 
			Since an $r$-tuple that does not contain $t$ has weight at least $\frac{(1 - \delta/r)^r}{t^r} \ge w(t) \cdot w(1)^{r-1}$, every non-edge contains $t$, i.e.\ $e(t) = a$. So
			\begin{align*}
				w(1) - w(t) 
				&\ge a \cdot  \frac{(1 - \delta/r)^{r-1}}{t^{r-1}} \cdot (r-2)! \cdot \left(1 + O(t^{-1})\right)
				\ge \frac{9a}{10}\cdot \frac{1}{t^{r-1}} \cdot (r-2)! ,
			\end{align*}
			where the first inequality follows from \ref{itm:useful-missing-1}, the assumptions on $w(t-1)$ and $e(t)$ and \eqref{eqn:compare-w-t-1}, and the second inequality follows since $\delta < 1/10$.
			Since  $w(1) \le \frac{1}{t}\left(1 + O(t^{-1})\right)$ and $w(t) \ge \frac{1}{4t}\left(1 + O(t^{-1})\right)$ (by \ref{itm:useful-w-1} and \ref{itm:useful-w-t-large}), it follows that
			\begin{align*}
				a \le \frac{3}{4}\cdot \frac{10}{9} \cdot \frac{t^{r-2}}{(r-2)!} \cdot (1 + O(t^{-1})) < \frac{7}{8} \binom{t-2}{r-2},
			\end{align*}
			a contradiction. Thus $e(t-1) = \Omega(a)$, as required for \ref{itm:useful-e-t-1}.
		\end{proof}

\subsection{Proof of \Cref{prop:i-1-intersecting-families} -- maximising sum of degrees squared} \label{sec:sum-degs-squared}

	\begin{proof}[Proof of \Cref{prop:i-1-intersecting-families}]
		First, we claim that $|e \cap f| = r - 1$ for every pair of distinct edges $e,f \in E(H)$. Indeed, let $m = e(H)$. Let $H'$ be an $r$-graph that consists of $m$ edges that all contain a fixed set of $r-1$ vertices. Then $P_2(H') = (r-1)m^2 + m$ (as the vertices in $S$ have degree $m$ and the remaining $m$ non-isolated vertices have degree $1$). Hence, $P_2(H) \ge (r-1)m^2 + m$. Also,
		$$
			P_2(H)  = \sum_{e \in E(H)} \sum_{x \in e} d(x)  = \sum_{e, f \in E(H)} |e \cap f| \le \sum_{e \in E(H)} (r + (m-1) \cdot (r-1)) \le (r-1)m^2 + m.
		$$
		Here, the first inequality follows as for any edge $e$, for every other edge $f$, we have $|e \cap f| \le r-1$. In fact, since $P_2(H) \ge (r-1)m^2 + m$, we must have equality. Hence, $|e \cap f| = r-1$ for every distinct $e, f \in E(H)$, as desired. 

		Let $e, f \in E(H)$ be distinct; so $|e \cap f| = r-1$. If $e \cap f \subseteq g$ for every $g \in E(H)$, then all edges of $H$ contain the set $e \cap f$, of size $r-1$, as claimed. So let us assume that there is an edge $g$ such that $e \cap f \nsubseteq g$. We claim that $h \subseteq e \cup f$ for every $h \in E(H)$. We first show this for $h = g$. Write $S = g \cap (e \cup f)$; if $g \nsubseteq e \cup f$ then $|S| \le r-1$ and $|S \cap e|, |S \cap f| \ge r-1$, which implies that $S = e \cap f$, a contradiction to the choice of $g$. Now let $h \in E(H)$. Suppose that $h \nsubseteq e \cup f$, and denote $T = h \cap (e \cup f)$. Then $|T| \le r-1$ and $|T \cap e|, |T \cap f|, |T \cap g| \ge r-1$. It follows that $|e \cap f \cap g| \ge r-1$, a contradiction. Hence, all edges of $H$ are contained in $e \cup f$, a set of size $r+1$, so $H$ is a subgraph of the clique $(e \cup f)^{(r)}$, where $|e \cup f| = r+1$, as claimed.
	\end{proof}

\section{Dealing with large $a$} \label{sec:a-med}

	The main theorem of the section is the following.
		
	\begin{thm} \label{thm:non-edges-big-range}
		Let $r \ge 3$ and $2 \le i \le r-1$, let $t \in \N$ be sufficiently large, and let $a$ be such that $t^{0.01} \cdot \binom{t-(i+1)}{r-(i+1)} \le a \le \binom{t-i}{r-i}$ and $a \neq \binom{t-2}{r-2}$. Suppose that $G$  maximises the Lagrangian among $r$-graphs with at most $m = \binom{t}{r} - a$ edges. Then $G$ is isomorphic to a left-compressed subgraph of $[t]^{(r)}$ with $m$ edges, all of whose non-edges contain $\{t - (i-1), \ldots, t\}$.
	\end{thm}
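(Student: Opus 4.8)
The plan is to fix a left-compressed maximiser together with a decreasing maximal weighting and argue by contradiction that no non-edge can avoid the vertex $t-i+1$. First I would put $G$ into normal form. A maximiser among $r$-graphs with at most $m$ edges has exactly $m$ edges, since adding an edge to a graph with fewer than $\binom{t}{r}$ edges strictly increases the Lagrangian. Because $2 \le i \le r-1$, $a \le \binom{t-i}{r-i}$ and $a \ne \binom{t-2}{r-2}$, we have $a < \binom{t-2}{r-2}$, so $m > \binom{t}{r} - \binom{t-2}{r-2}$ and \Cref{cor:t-vs-compressed} applies: $G$ is isomorphic to a left-compressed subgraph of $\cl{t}{r}$ with $m$ edges. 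Fix such a copy and a decreasing maximal weighting $w$; by \Cref{prop:prelims-new} every vertex of $[t]$ has positive weight, so every $r$-subset of $[t]$ has positive $w$-weight. The basic engine is the exchange inequality: if $A$ is a non-edge and $B$ an edge with $w(A) > w(B)$, then $(G\setminus B)\cup A$ is an $r$-graph with $m$ edges and $w$-weight $w(G) + w(A) - w(B) > w(G) = \Lambda(m,r)$, contradicting maximality. Hence every non-edge weighs at most every edge.

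Next I would reduce the goal. Since $G$ is left-compressed, its non-edges form an up-set under raising a vertex to a larger one: if $F \notin G$, $x < y$, $x \in F$, $y \notin F$, then $(F \setminus x)\cup y \notin G$. Consequently ``every non-edge contains $\{t-i+1,\dots,t\}$'' is equivalent to the single condition ``every non-edge contains $t-i+1$'', because a non-edge containing $t-i+1$ but missing some $u > t-i+1$ could be raised to a non-edge avoiding $t-i+1$. So it suffices to rule out a non-edge $A$ with $t-i+1 \notin A$. Writing $L := \{t-r,\dots,t\}\setminus\{t-i+1\}$, which minimises $w$ among all $r$-sets avoiding $t-i+1$ (as $w$ is decreasing, one should take the $r$ largest indices outside $\{t-i+1\}$), we have $w(A) \ge w(L)$. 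Let $\mathcal{N} := \{F \in \cl{t}{r} : \{t-i+1,\dots,t\} \subseteq F\}$, so $|\mathcal{N}| = \binom{t-i}{r-i} \ge a$.

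The contradiction will then follow from the following counting statement, which is the heart of the argument: \emph{at least $a$ members of $\mathcal{N}$ have $w$-weight strictly less than $w(L)$.} Granting this, since $A$ is a non-edge lying outside $\mathcal{N}$ and there are only $a$ non-edges in total, at most $a-1$ members of $\mathcal{N}$ are non-edges; hence one of the $\ge a$ light members of $\mathcal{N}$ is an edge $B^*$ with $w(B^*) < w(L) \le w(A)$, and the exchange inequality is violated.

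Finally, proving this counting statement is the step I expect to be the main obstacle, and it is where both bounds on $a$ and the quantitative estimates enter. Cancelling the common factor $w(t-i+2)\cdots w(t)$, the inequality $w(F) < w(L)$ for $F = \{t-i+1,\dots,t\}\cup C \in \mathcal{N}$ reduces to the condition $w(t-i+1)\,w(C) < \prod_{j=t-r}^{t-i} w(j)$ on the $(r-i)$-set $C \subseteq [t-i]$, and the content is that enough $C$ satisfy it. Using \Cref{prop:prelims-old} \ref{itm:useful-w-x} and \ref{itm:useful-missing-1} with \Cref{prop:prelims-new}, the weights $w(x)$ for $x \le t-i$ are all $w(1)(1+o(1))$ up to corrections governed by $e(x) - e(1)$, whose total $\sum_x (e(x) - e(1)) \le ra = O(a)$ is controlled by the upper bound $a \le \binom{t-i}{r-i}$; meanwhile the very top vertices carry a genuine weight deficit (for $t$ this is \Cref{prop:prelims-new} \ref{itm:useful-e-t}, and the lower bound $a \ge t^{0.01}\binom{t-(i+1)}{r-(i+1)}$ is what forces this deficit to be large enough for the count to reach $a$). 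I would most likely establish the claim by peeling the top vertices off one at a time --- showing non-edges contain $t$, then $\{t-1,t\}$, and so on up to $\{t-i+1,\dots,t\}$ --- so that at each stage the weight comparison takes place in the link of the already-fixed top vertices, where the surviving weights are close to uniform and the counting is cleanest. Keeping the accumulated weight deviations under control across these stages, and in particular turning the qualitative bounds $e(t-1),\dots = \Omega(a)$ (cf.\ \Cref{prop:prelims-new} \ref{itm:useful-e-t-1}) into the exact conclusion $e(t-i+1) = a$, is the delicate part.
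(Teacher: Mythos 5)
Your normal-form reductions are fine (exactly $m$ edges, left-compressedness via \Cref{cor:t-vs-compressed}, the up-set property of non-edges reducing the goal to ``every non-edge contains $t-i+1$''), and the exchange inequality and the logic ``$\ge a$ light members of $\mathcal{N}$ plus $\le a-1$ non-edges in $\mathcal{N}$ yields a contradiction'' are sound as stated. But the entire content of the theorem sits in your counting claim, which you do not prove, and the route you sketch for it does not go through. Concretely: your inequality for $F = \{t-i+1,\dots,t\} \cup C \in \mathcal{N}$ reduces to $w(t-i+1)\,w(C) < \prod_{j=t-r}^{t-i} w(j)$, and since $w$ is decreasing and $C \subseteq [t-i]$, even the most favourable $C$ only gives the boundary condition $w(t-i+1) < w(t-r)$; getting the count up to $a$ requires the deficit $w(1)-w(t-i+1)$, i.e.\ essentially $e(t-i+1)$, to dominate the deficits of $t-r,\dots,t-i$ by a definite margin. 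The available estimates do not supply this: \Cref{prop:prelims-new}~(ii) only gives $e(t) = \Omega(a)$, and \Cref{prop:prelims-new}~(iv), which you cite for ``$e(t-1),\dots = \Omega(a)$'', is valid only in the narrow window $\frac{7}{8}\binom{t-2}{r-2} \le a < \binom{t-2}{r-2}$, so for general $i$ and general $a$ in the theorem's range nothing prevents the regime $e(t-(i-1)) = o(a)$ (say $e(t-(i-1)) \le t^{-\gamma/20r}a$), in which $w(t-i+1)$ is within $o(a/t^{r-1})$ of $w(1)$ and your condition can fail for all but $o(a)$ of the sets $C$. Your closing remark that the delicate part is ``turning the qualitative bounds into the exact conclusion $e(t-i+1) = a$'' concedes the point: $e(t-i+1) = a$ \emph{is} the theorem, so the proposal is circular exactly where it matters.

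There is also a structural reason why fixed-weighting, single-edge exchanges are too weak here, and it explains what the paper does instead. Maximality of $(G,w)$ makes every single swap non-profitable to first order, so per-edge comparisons under $w$ only see differences of size $O(e(x)/t^{2r-3})$; the paper's contradiction instead comes from a \emph{second-order} gain after changing the weighting. It builds a comparison graph $H$ with all non-edges containing $I$ and an engineered \emph{twin} pair (\Cref{Htwins}), bounds the total loss of all swaps at once by $O\bigl(t^{-\gamma/r} a^2 t^{-2(r-1)}\bigr)$ (\Cref{cl:weight-lost-general-case}), and then equalises the weights of two twins $x,y$ with $w(x)-w(y) = \Omega\bigl(t^{-\gamma/10r} a t^{-(r-1)}\bigr)$, gaining $w(N_H(x,y))\bigl(\frac{w(x)-w(y)}{2}\bigr)^2 = \Omega\bigl(t^{-\gamma/5r} a^2 t^{-2(r-1)}\bigr)$ as in \eqref{eqn:weight-gained-case-1}, which beats the loss. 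Crucially, \Cref{cl:pick-two-vs} handles precisely your problematic regime by a case split: if $e(t-(i-1))$ is small it pairs $t-(i-1)$ with $t$ (whose deficit is $\Omega(a)$ by \Cref{prop:prelims-new}~(ii)), and otherwise it shows $e(t-i) = \Omega(t^{-\gamma/10r}a)$ and pairs $1$ with $t-i$ --- pairings your framework cannot exploit, since in $G$ these vertices are not twins and no single-edge swap captures the variance gain. To repair your argument you would need either a proof of the deficit-domination statement for $t-i+1$ (which I do not believe follows from the cited estimates) or to import a second-order mechanism of this kind, at which point you have reconstructed the paper's proof.
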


	\begin{rem}
		We believe that we can extend this to all $a > (r+1) \binom{t - (i+1)}{r - (i+1)}$ (using a more careful analysis of the arguments in the proof of \Cref{thm:non-edges-big-range} as well as a different argument for very small $a$ that uses \Cref{lem:eval-lag}). However, we did not wish to make the paper more complicated, so we do not include this proof here.
	\end{rem}
	
	It is not difficult to see that \Cref{thm:non-edges-big-range} implies \Cref{thm:range} when $a \ge t^{0.01}$.

	\begin{proof}[Proof of \Cref{thm:range} when $a \ge t^{0.01}$]
		By \Cref{thm:non-edges-big-range}, we may assume that $G$ is a subgraph of $[t]^{(r)}$ on $m$ edges all of whose non-edges contain $\{t-(r-2), \ldots, t\}$. But this determines $G$ uniquely and implies that $G \simeq \col{m}{r}$, as required.
	\end{proof}
	
	The rest of the section is devoted to the proof of \Cref{thm:non-edges-big-range}. Say that $x \not= y \in V(G)$ are \emph{twins} if $N_x(y) = N_y(x)$. 
	
	As in the proof of \Cref{thm:main1} in \cite{Us1}, we will compare $G$ with a graph $H$ on the same number of edges in which all non-edges contain $\{t-(i-1),\ldots,t\}$. As $G$ and $H$ have the same number of edges, we think of the members of $E(G)\setminus E(H)$ as being paired with the members of $E(H)\setminus E(G)$. We think of $H$ as being obtained from $G$ by swapping the edges in each of these pairs. We first find an upper bound on the weight lost when replacing $G$ with $H$, and later we show that a modification of $w$ allows us to gain more weight than we lost, thus reaching a contradiction. For technical reasons,  we require that $1$ and $t-i$ are twins in $H$. 
	
	\begin{proof}[Proof of \Cref{thm:non-edges-big-range}]
		By \Cref{cor:t-vs-compressed}, we may assume that $G$ is a left-compressed subgraph of $[t]^{(r)}$ on $m$ edges. Let $w$ be a decreasing maximal weighting of $G$.
		Let $I := \{t-(i-1), \ldots, t\}$. Suppose that there exists an edge of $\overline{G} := [t]^{(r)} \setminus G$ that does not contain $I$. 
				
		\begin{claim}\label{Htwins}
			There exists an $r$-graph $H$ on vertex set $[t]$ with $|H| = |G|$ such that: every edge of $\overline{H}$ contains $I$; the vertices $1$ and $t-i$ are twins in $H$, and; all but $O(t^{r-i-1})$ $r$-tuples in $E(H) \setminus E(G)$ do not contain $I$.
		\end{claim}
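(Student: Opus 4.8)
The plan is to build $H$ by working entirely with its non-edges, exploiting the fact that since $G$ is left-compressed, $\overline G$ is closed under replacing a smaller vertex by a larger one. Write $A := \{e \in \overline G : I \subseteq e\}$ for the non-edges already containing $I$, and $B := \overline G \setminus A$ for the rest, so $|A| + |B| = a$. Every $r$-tuple containing $I$ has the form $I \cup T$ with $T \in [t-i]^{(r-i)}$ (note $1, t-i \notin I$ for large $t$), and under the bijection $I \cup T \leftrightarrow T$ the set $A$ corresponds to an $(r-i)$-graph $A^* \subseteq [t-i]^{(r-i)}$, which a direct check shows inherits up-compression from $\overline G$. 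Since $\overline H$ will consist only of tuples containing $I$ and the transposition $\sigma$ swapping $1$ and $t-i$ fixes $I$ pointwise, the requirement that $1$ and $t-i$ be twins in $H$ is exactly the requirement that the reduced non-edge set be $\sigma$-invariant. Thus the whole problem reduces to producing a $\sigma$-invariant $(r-i)$-graph on $[t-i]$ with exactly $a$ edges containing all but $O(t^{r-i-1})$ edges of $A^*$. (The standing hypothesis $B \neq \emptyset$ is not actually needed for the construction; it is used later in the argument.)

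To locate the obstruction to $\sigma$-invariance I split $A^*$ by how it meets $\{1, t-i\}$: let $A_1$ (resp.\ $A_2$) consist of the tuples containing $1$ but not $t-i$ (resp.\ $t-i$ but not $1$), and let $A_0$ be the rest, all of which are $\sigma$-fixed. Up-compression sends $A_1$ into $A_2$ injectively via $\sigma$ (replacing $1$ by the larger vertex $t-i$), so $\sigma(A_1) \subseteq A_2$ and the only unpaired tuples form $U := A_2 \setminus \sigma(A_1)$. The key point is that every tuple of $A_2$ corresponds to a non-edge of $G$ containing the fixed $(i+1)$-set $I \cup \{t-i\}$ and avoiding $1$, so $|U| \le |A_2| \le \binom{t-i-2}{r-i-1} = O(t^{r-i-1})$.

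I then set the reduced non-edge set to be $\overline{H}^* := (A^* \setminus U) \cup D$. Here $A^* \setminus U = A_0 \cup A_1 \cup \sigma(A_1)$ is $\sigma$-invariant by construction, and $D$ is a $\sigma$-invariant family of $(r-i)$-subsets of $[t-i]$, disjoint from $A^* \setminus U$, of size $|B| + |U|$, so that $|\overline H^*| = a$. Such a $D$ exists: the number of available tuples is $\binom{t-i}{r-i} - |A^* \setminus U| \ge |B| + |U|$, and, away from the trivial extremal case $a = \binom{t-i}{r-i}$ (where $\overline H$ is forced to be every $I$-tuple and is automatically symmetric), there remain $\sigma$-fixed tuples to absorb any parity issue when assembling $D$ from fixed points and $\sigma$-orbits. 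Putting $\overline H := \{I \cup T : T \in \overline H^*\}$ and $H := [t]^{(r)} \setminus \overline H$ gives $|H| = |G|$, all non-edges of $H$ containing $I$, and $1, t-i$ twins by $\sigma$-invariance. Finally $A^* \setminus \overline H^* \subseteq U$, so the $I$-containing members of $E(H) \setminus E(G) = \overline G \setminus \overline H$ number at most $|U| = O(t^{r-i-1})$, giving the last requirement.

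The main obstacle is precisely the third property: bounding how many $I$-containing non-edges of $G$ must be opened up in order to symmetrize. Everything turns on the observation that up-compression makes $A^*$ fail $\sigma$-invariance only through $U \subseteq A_2$, and that $A_2$ lies inside the tuples through the fixed $(i+1)$-set $I \cup \{t-i\}$ and so has size $O(t^{r-i-1})$. The remaining ingredients — the passage to the reduced $(r-i)$-graph, the translation of the twin condition into $\sigma$-invariance, and the padding by $D$ — are essentially bookkeeping, with the only mild subtlety being the parity check in the choice of $D$.
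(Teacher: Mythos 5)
Your construction is, at heart, the same symmetrisation as the paper's: the paper also makes $1$ and $t-i$ twins by swapping, for each $(r-1)$-set whose extensions by $1$ and by $t-i$ disagree, one extension for the other (its families $\A$ and $\B$), and it pays only $r$-tuples containing $I$ together with one of $1, t-i$, of which there are $O(t^{r-i-1})$ --- exactly your bound on $U$ via the $(i+1)$-set $I \cup \{t-i\}$. Your complement-side packaging (reduce to an $(r-i)$-graph on $[t-i]$, translate the twin condition into $\sigma$-invariance) is clean, and your use of up-closure of $\overline{G}$ to get $\sigma(A_1) \subseteq A_2$ is a nice touch, though not essential: even without compression, $|A_1| + |A_2| = O(t^{r-i-1})$, since both families live in tuples through $I \cup \{1\}$ or $I \cup \{t-i\}$, which is all the paper uses.

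There is, however, a genuine gap in the parity step. You assert that whenever $a < \binom{t-i}{r-i}$ there remain available $\sigma$-fixed tuples to absorb an odd $|B| + |U|$; this is false, and nothing in your argument (which, like the paper's, uses only left-compression, not maximality) rules the bad configuration out. Concretely, take $i = r-1$ and $\overline{G} = \{I \cup \{v\} : 2 \le v \le t-i\}$, i.e.\ all $I$-containing $r$-sets except $I \cup \{1\}$; this is up-closed, so $G$ is left-compressed, and $a = t - r$ lies strictly inside the range of \Cref{thm:non-edges-big-range}. Here $B = \emptyset$, $A_1 = \emptyset$, $U = A_2 = \{\{t-i\}\}$, so $|B| + |U| = 1$ is odd, every $\sigma$-fixed tuple already lies in $A_0 \subseteq A^* \setminus U$, and the available set $V = \{\{1\}, \{t-i\}\}$ is a single $2$-orbit: no $\sigma$-invariant $D$ of size $1$ disjoint from $A^* \setminus U$ exists, so your prescribed $\overline{H}^*$ does not. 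The repair is one line --- in this situation $A_0$ contains \emph{every} $\sigma$-fixed tuple, so discard one $\sigma$-fixed tuple of $A_0$ from $\overline{H}^*$ as well and add one further $2$-orbit (possible since $|V| \ge |B| + |U| + 1$ whenever $a < \binom{t-i}{r-i}$), which changes the number of reopened $I$-containing non-edges by $1$ and preserves the $O(t^{r-i-1})$ bound --- but as written the step fails. Note the paper's more robust gadget: an $r$-set $e \supseteq I \cup \{1, t-i\}$ never affects the twin condition, so it can be swapped out if it is an edge of $F$ and swapped in otherwise, fixing the parity in both directions; you may want to adopt that device in place of your parenthetical justification.
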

		\begin{proof}
			Let $F$ be a graph obtained from $G$ by swapping each edge of $\overline{G}$ that does not contain $I$ with an edge of $G$ that does; note that such a graph exists by our assumption on the number of non-edges. So every edge of $\overline{F}$ contains $I$. 
			
			We will show that there exists a graph $H$ with $|H| = |F|$ in which the vertices $1$ and $t-i$ are twins, and $E(H) \triangle E(F)$ contains only $r$-tuples that contain $I$ and at least one of $1$ and $t-i$. This suffices to prove the claim as the condition on $E(H) \triangle E(F)$ ensures both that every edge of $\overline{H}$ contains $I$, and that all but $O(t^{r-i-1})$ $r$-tuples in $E(H) \setminus E(G)$ do not contain $I$ (as $\left|E(H) \triangle E(F)\right| = O(t^{r-i-1})$). 
			
			Let $\A$ and $\B$ be defined as follows.
			\begin{align*}
				& \A = \{A \subseteq [t], |A| = r-1 : A \cup \{1\} \in E(F) \text{ and } A \cup \{t-i\} \notin E(F) \} \\
				& \B = \{B \subseteq [t], |B| = r-1 : B \cup \{t-i\} \in E(F) \text{ and } B \cup \{1\} \notin E(F) \}.
			\end{align*}
			Denote $a = |\A|$ and $b = |\B|$. If $a$ and $b$ have the same parity, fix any subset $\S \subseteq \A \cup \B$ of size $(a+b)/2$, and define
				\begin{align*}
					& T_1:= \{A \cup \{1\}: A \in \A\} \cup \{B \cup \{t-i\}: B \in \B\},\\ 
					& T_2:= \{C \cup \{1\}: C \in \S\} \cup \{C \cup \{t-i\}: C \in \S\}.
				\end{align*}
				
			If $a$ and $b$ have different parities, we consider two cases. If there is an edge $e$ in $F$ that contains $I \cup \{1, t-i\}$, then let $\S$ be a subset of $\A \cup \B$ of size $(a+b+1)/2$, and define
				\begin{align*}
					& T_1:= \{A \cup \{1\}: A \in \A\}\cup \{B \cup \{t-i\}: B \in \B\} \cup \{e\},\\
					& T_2:= \{C \cup \{1\}: C \in \S\} \cup\{C \cup \{t-i\}: C \in \S\}.
				\end{align*}
			Otherwise, if there are no edges that contain $I \cup \{1, t-i\}$, then let $\S \subseteq \A \cup \B$ have size $(a+b-1)/2$ and let $e$ be any $r$-set containing $I \cup \{1, t-i\}$. Set
			\begin{align*}
				& T_1:= \{A \cup \{1\}: A \in \A\} \cup \{B \cup \{t-i\}: B \in \B\},\\
				& T_2:= \{C \cup \{1\}: C \in \S\} \cup \{C \cup \{t-i\}: C \in \S\} \cup \{e\}.
			\end{align*} 
			For each case, define $H$ to be the graph obtained from $F$ by replacing the edges $T_1$ by $T_2$. Note that $|T_1| = |T_2|$ and so $|H| = |F| = |G|$. By definition, the vertices $1$ and $t-i$ are twins in $H$, and every member of $E(H) \triangle E(F)$ contains $I$ and at least one of $\{1, t-i\}$. This completes the proof of the claim.
		\end{proof}

		We now bound the weight lost by replacing $G$ with $H$.
		
		\begin{claim} \label{cl:weight-lost-general-case}
			Let $\gamma:= 0.001$. Then $w(H) - w(G) = O(t^{-\gamma/r} \cdot \frac{a^2}{t^{2(r-1)}})$.
		\end{claim}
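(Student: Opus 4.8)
The plan is to track the weight through the two stages of the construction in \Cref{Htwins}, writing
\[
	w(H) - w(G) = \big(w(F) - w(G)\big) + \big(w(H) - w(F)\big),
\]
and to bound each bracket separately. For the first bracket I would exploit maximality: since $w$ is maximal for $G$ and $G$ has the largest Lagrangian among $r$-graphs with $m$ edges, $G$ must consist of the $m$ heaviest $r$-tuples under $w$ (otherwise a single swap would increase $w(G)=\lambda(G)$). Hence every edge weighs at least as much as every non-edge, and in particular $w(F)-w(G)\le 0$. Recall that $F$ arises from $G$ by swapping each non-edge of $G$ that avoids $I$ (which becomes an edge of $F$) for an edge of $G$ containing $I$; I would choose the latter to be the \emph{lightest} edges containing $I$. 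Writing $s$ for the number of non-edges of $G$ avoiding $I$, this is a union of $s$ swaps, so
\[
	w(G) - w(F) \ \le\ s\cdot\Big(\max_{g\supseteq I,\, g\in E(G)} w(g)\ -\ \min_{n\not\supseteq I,\, n\in\overline G} w(n)\Big).
\]
Both families lie at the weight threshold of $G$, and using \Cref{prop:prelims-old}\,\ref{itm:useful-w-x} and \Cref{prop:prelims-new}\,\ref{itm:useful-e-t} (which give $e(t)=\Theta(a)$ and hence $w(1)-w(t)=\Theta(a\,t^{-(r-1)})$) I would show that the weights of near-threshold $r$-tuples occupy a band of width $O\!\big(a\,t^{-2(r-1)}\big)$, the propagated cost of exchanging one top-index vertex for a low-index one.

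The crux is then the bound on $s$. Since $G$ is left-compressed, $\overline G$ is an up-set under replacing a vertex by a larger one, and $|\overline G|=a\le\binom{t-i}{r-i}$; bounding $s\le\sum_{j\in I}(a-e(j))$ and showing that the top $i$ degrees of $\overline G$ are nearly saturated should give $s=O\!\big(\binom{t-(i+1)}{r-(i+1)}\big)$. Combined with the hypothesis $a\ge t^{0.01}\binom{t-(i+1)}{r-(i+1)}$ this yields $s=O(a\,t^{-0.01})$, so that
\[
	w(G)-w(F) \ =\ O\!\left(a\,t^{-0.01}\cdot\frac{a}{t^{2(r-1)}}\right)\ =\ O\!\left(t^{-\gamma/r}\cdot\frac{a^2}{t^{2(r-1)}}\right),
\]
as $0.01>\gamma/r$. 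This is precisely where the saving $t^{-\gamma/r}$ enters, and I expect this degree-saturation/up-set counting step to be the main obstacle.

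For the second bracket, $w(H)-w(F)=w(T_2)-w(T_1)$, where by \Cref{Htwins} every tuple of $T_1\cup T_2$ contains $I$ and exactly one of $\{1,t-i\}$, so each relevant $(r-1)$-link contains $I$. Writing $D=\sum_{A\in\A}w(A)$, $E=\sum_{B\in\B}w(B)$ and $\Sigma=\sum_{C\in\S}w(C)$, one has $w(T_1)=w(1)D+w(t-i)E$ and $w(T_2)=(w(1)+w(t-i))\Sigma$, giving the convenient identity
\[
	w(H)-w(F) \ =\ w(t-i)\big(2\Sigma - D - E\big)\ +\ \big(w(1)-w(t-i)\big)\big(\Sigma - D\big).
\]
Because we may choose $\S\subseteq\A\cup\B$ freely and the link weights are nearly equal, I would pick $\S$ to balance the nearly-equal terms so that $2\Sigma\approx D+E$, controlling the first term; the second term is controlled by $w(1)-w(t-i)$, which is small since $1$ and $t-i$ have nearly equal weight. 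Estimating $w(1)-w(t-i)=\Theta\!\big(\tfrac{e(t-i)-e(1)}{t^{r-2}}w(t)\big)$ via \Cref{prop:prelims-old}\,\ref{itm:useful-missing-1}--\ref{itm:useful-w-x}, together with $e(t-i)=O\!\big(\binom{t-(i+1)}{r-(i+1)}\big)$ and $|\A\cup\B|=O(t^{r-i-1})$ (each link of weight $O(t^{-(r-1)})$), makes this bracket dominated by the target.

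Adding the two brackets gives the claimed bound. Beyond the counting step flagged above, the delicate remaining points are the parity bookkeeping of \Cref{Htwins} (the single correction edge $e$, and the degenerate case $i=r-1$ where $\A\cup\B$ is a single link) and verifying that the balancing error in the second bracket stays below $O\!\big(t^{-\gamma/r}a^2 t^{-2(r-1)}\big)$ uniformly over the whole range of $a$; these I would treat as routine once the weight estimates and the bound on $s$ are in place.
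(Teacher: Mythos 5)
Your decomposition $w(H)-w(G)=(w(F)-w(G))+(w(H)-w(F))$, the observation that maximality forces every edge of $G$ to weigh at least as much as every non-edge, the per-swap ``band width'' $O(a\,t^{-2(r-1)})$, and your balancing treatment of $w(T_2)-w(T_1)$ are all sound and consistent with (indeed slightly finer than) the paper's handling of the pairs that both contain $I$. But the step you yourself flag as the main obstacle is a genuine gap, and it is not a counting lemma waiting to be filled in: the claim that the top degrees of $\overline G$ are nearly saturated, i.e.\ $a-e(j)=O\big(\binom{t-(i+1)}{r-(i+1)}\big)$ for all $j\in I$, whence $s=O(a\,t^{-0.01})$, is essentially a quantitative form of the \emph{conclusion} of \Cref{thm:non-edges-big-range} and does not follow from left-compression or up-set structure. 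For instance, with $r=3$, $i=2$, the complement $\overline G=\{(x,t-1,t):x\le t-2\}\cup\{(x,t-2,t):x\le t-3\}$ is an up-set (so $G$ is left-compressed), yet roughly half of all non-edges avoid $I=\{t-1,t\}$, giving $s=\Theta(a)$ while your bound would force $s=O(1)$. Ruling out such configurations for Lagrangian maximisers is exactly the hard content of the proof; the available preliminaries give only $e(t)=\Omega(a)$ (\Cref{prop:prelims-new}\,\ref{itm:useful-e-t}) and, in part of the range, $e(t-1)=\Omega(a)$ (\Cref{prop:prelims-new}\,\ref{itm:useful-e-t-1}) --- far short of saturation. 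Had saturation been available, the whole theorem would be nearly immediate, which signals the circularity.

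The paper's proof gains the factor $t^{-\gamma/r}$ by a different mechanism: it never bounds the \emph{number} of swaps below $a$, but instead bounds the loss \emph{per swap} by $O\big(e(x_{r-i})\,t^{-2(r-1)}\big)$, where $x_{r-i}$ is the $(r-i)$-th smallest vertex of the swapped non-edge (your band-width estimate, but localised). It then runs a dichotomy: if $e(t-(i-1))\le t^{-\gamma/r}a$, monotonicity of $e(\cdot)$ makes every swap cheap; otherwise it introduces $S=\{x: e(x)\ge t^{-2\gamma/r}a\}$, of size $O(t^{2\gamma/r})$, and uses a weight-exchange argument (the computation \eqref{eq:annoying-calc}, exploiting maximality of $G$ exactly as in your first observation) to show every non-edge contains at least $i$ vertices of $S$. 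Consequently, for all but $O(|S|^{i+1}\binom{t-(i+1)}{r-(i+1)})=O(a\,t^{-8\gamma})$ non-edges one has $x_{r-i}\notin S$, so $e(x_{r-i})\le t^{-2\gamma/r}a$ and the swap is cheap even when $s=\Theta(a)$. To repair your argument you would need to replace the saturation/counting step by some version of this per-swap analysis; as written, the first bracket of your proof cannot be closed.
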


		\begin{proof}
			We pair the edges of $E(G) \setminus E(H)$ with those of $E(H) \setminus E(G)$, so that we consider $H$ as being obtained from $G$ by a series of swaps, such that in all but $O(t^{r-i-1})$ swaps an edge in $G$ that contains $I$ is swapped with a non-edge that does not contain $I$. Furthermore, the remaining swapped pairs consist of two $r$-tuples that contain $I$.

			Let $f := (x_1, \ldots, x_r)$ be a non-edge in $G$ (that may or may not contain $I$) and suppose that it is swapped with the edge $g := (y_1, \ldots, y_{r-i}, t-(i-1), \ldots, t)$ in $G$. Then
			\begin{align}\label{eq:edgebound}
				w(f) - w(g) \le \left(w(1)^{r-i} - w(x_{r-i})^{r-i}\right)\cdot \prod_{j=0}^{i-1}w(t-j) 
				=  O\left(\frac{e(x_{r-i})}{t^{2(r-1)}}\right).
			\end{align}
			Here, we used the fact that $w(1)^s - w(x)^s = O\left( (w(1) - w(x)) \cdot  t^{-(s-1)} \right) = O(\frac{e(x)}{t^{r+s-2}})$ for every $x \in [t]$ and $s \le r$, which follows from \Cref{prop:prelims-old} \ref{itm:useful-w-x} and the fact that $w(1) = O(t^{-1})$ (see \Cref{prop:prelims-new} \ref{itm:useful-w-1}). 

			First consider the contribution from pairs consisting of an edge and a non-edge that both contain $I$. Since there are $O(t^{r-i-1}) = O(t^{-0.01} a)$ of them, their contribution is $O(\frac{t^{-0.01}a^2}{t^{2(r-1)}})$.
			
			Next consider the remaining swapped pairs. We distinguish two cases.
			If $e(t-(i-1)) \le t^{-\gamma/r} \cdot a$, then $e(x_{r-i}) \le t^{-\gamma/r} \cdot a$, which implies that the loss from each swap is $O(t^{-\gamma/r}\cdot \frac{a}{t^{2(r-1)}})$, and $O(t^{-\gamma/r}\cdot \frac{a^2}{t^{2(r-1)}})$ in total, as required.
			
			Otherwise, i.e.\  when $e(t-(i-1)) \ge t^{-\gamma/r} \cdot a$, we define $S = \{x \in [t] : e(x) \ge t^{-2\gamma/r} \cdot a \}$. Then $|S| = O(t^{2\gamma/r})$, as the total number of missing edges is $a$. We show that every non-edge contains at least $i$ vertices of $S$. Suppose $x:=(x_1,\ldots,x_r)$ is a non-edge that contains at most $(i-1)$ vertices of $S$. By definition of $S$, each vertex $u \notin S$ satisfies $e(u) \le t^{-2\gamma/r} \cdot a \le e(t - (i-1))\cdot t^{-\gamma/r} $.   
			Then using \Cref{prop:prelims-old} \ref{itm:useful-missing-1} and \ref{itm:useful-w-x}, we have
			\begin{align}\label{eq:annoying-calc}
				\begin{split}
				w(x) 
				& \ge  w(t) \cdots w(t-(i-2)) \cdot \left(w(1) - O\left(\frac{e(t- (i-1))\cdot t^{-\gamma/r} }{t^{r-1}}\right)\right)^{r-(i-1)}\\
				& \ge  w(t) \cdots w(t-(i-2))\cdot \left(w(1) - o\left(\frac{e(t- (i-1)) }{t^{r-1}}\right)\right) \cdot w(1)^{r-(i-2)} \\
				&> w(t) \ldots w(t-(i-1))\cdot w(1)^{r-(i-2)}.
				\end{split}
			\end{align}
			So swapping $x$ with any edge of $G$ containing $I$ would increase the weight of $G$, a contradiction.				
			
			Given this, we consider two types of non-edges: those that contain exactly $i$ vertices of $S$, and those that contain at least $i+1$ vertices of $S$. The number of non-edges of the second type is at most 
			$$O(|S|^{i+1}) \cdot \binom{t-(i+1)}{r-(i+1)} = O(t^{(2\gamma/r) \cdot (i+1) + r - (i+1)}) = O(a \cdot t^{-8\gamma}),$$
			hence (using \eqref{eq:edgebound}) the loss from swaps involving non-edges of the second type is $O(\frac{t^{-8\gamma} \cdot a^2}{t^{2(r-1)}})$. The loss per swap that involves an edge of the first type is $O(\frac{t^{-2\gamma/r} \cdot a}{t^{2(r-1)}})$, hence (again using \eqref{eq:edgebound}) in total $O(\frac{t^{-2\gamma/r} \cdot a^2}{t^{2(r-1)}})$. The claim follows.
		\end{proof}
		
		We now show that by changing the weight $w$ of the vertices of $H$ slightly, we can obtain a graph whose weight is larger than the weight of $G$, thus reaching a contradiction to the choice of $G$ and $w$.
		
		\begin{claim} \label{cl:pick-two-vs}
			Let $\gamma = 0.001$. Then there exist two vertices $x, y \in [t]$ that are twins in $H$, such that $|N_H(x,y)| = \Omega(t^{r-2})$ and $w(x) - w(y) = \Omega(t^{-\gamma/10r} \cdot \frac{a}{t^{r-1}})$.
		\end{claim}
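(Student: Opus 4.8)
The plan is to extract the pair $x,y$ from the twin pairs that $H$ is guaranteed to contain: the pair $\{1,t-i\}$, which are twins by \Cref{Htwins}, and any pair inside $I=\{t-(i-1),\dots,t\}$, which are twins in $H$ because every non-edge of $H$ contains $I$ and so any two vertices of $I$ have the same link. For these pairs the common-neighbourhood bound is the easy part. A set $Z\in[t]^{(r-2)}$ fails to lie in $N_H(x,y)$ only when $Z\cup\{x,y\}$ is a non-edge, and every non-edge contains $I$; hence for $\{1,t-i\}$ the number of missing $Z$ is $O(t^{r-i-2})=o(t^{r-2})$ (as $I\cup\{1,t-i\}$ has $i+2$ vertices), giving $|N_H(1,t-i)|=\binom{t-2}{r-2}-o(t^{r-2})=\Theta(t^{r-2})$ in all cases, while a pair inside $I$ lies in all $a$ non-edges and so has $|N_H|=\binom{t-2}{r-2}-a$, which is $\Omega(t^{r-2})$ except when $i=2$ and $a$ is within $o(t^{r-2})$ of $\binom{t-2}{r-2}$.

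The weight gap is where the real work is. By \Cref{prop:prelims-old} \ref{itm:useful-w-x}, combined with $w(1),w(t)=\Theta(1/t)$ from \Cref{prop:prelims-new} \ref{itm:useful-w-1} and \ref{itm:useful-w-t-large}, the difference between the weights of two vertices satisfies $w(x)-w(y)=\Theta\big((e(y)-e(x))/t^{r-1}\big)$ (for the extremal comparisons one may instead invoke Frankl's identity, \Cref{lem:easy} \ref{itm:frankl}, exactly as in the proof of \Cref{prop:prelims-new} \ref{itm:useful-e-t-1}). So it is enough to find a twin pair whose $e(\cdot)$-values differ by at least $t^{-\gamma/10r}a$, up to a constant. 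I would do this by a dichotomy on how the non-edge degrees spread across $I$, using that left-compression gives $e(t)\ge e(t-1)\ge\dots\ge e(1)$ and that $e(t)=\Omega(a)$ by \Cref{prop:prelims-new} \ref{itm:useful-e-t}.

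In the first case the degrees are spread, $e(t)-e(t-(i-1))\ge t^{-\gamma/10r}a$, and then the within-$I$ pair $(t,\,t-(i-1))$ has both the required $e$-gap and, when $i\ge3$ or $a$ is bounded away from $\binom{t-2}{r-2}$, the required common neighbourhood. (The remaining sub-regime, $i=2$ with $a$ near $\binom{t-2}{r-2}$, is precisely the one governed by \Cref{prop:prelims-new} \ref{itm:useful-e-t-1}, which gives $e(t-1)=\Omega(a)$ and so excludes a large spread.) In the second case the degrees are flat on $I$: $e(v)\ge e(t)-t^{-\gamma/10r}a=\Omega(a)$ for every $v\in I$. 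Here the pair $\{1,t-i\}$ already has the right common neighbourhood, so it suffices to show $e(t-i)\ge t^{-\gamma/10r}a$.

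The flat case is the main obstacle. One cannot bound $e(t-i)$ below by counting alone, since for the colex graph $e(t-i)=\binom{t-i-1}{r-i-1}$, which is smaller than $a$ by as much as a factor $t^{0.01}$; the point must be that flatness on $I$ together with the assumed non-edge avoiding $I$ is incompatible with maximality of $G$ unless $e(t-i)$ is already large. Concretely, suppose for contradiction that also $e(t-i)<t^{-\gamma/10r}a$. Then every vertex outside $I$ has $e(\cdot)\le e(t-i)=o(a)$ and hence weight $w(1)-o(a/t^{r-1})$, strictly above $w(v_0)=w(1)-\Theta(a/t^{r-1})$ for any $v_0\in I$. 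So if $F$ is a non-edge of $G$ missing some $v_0\in I$, replacing a vertex $u\in F\setminus I$ by $v_0$ strictly lowers the weight of $F$; by maximality of $w$ (\Cref{lem:easy} \ref{itm:lag-w-nb}) the set $(F\setminus u)\cup v_0$ must itself be a non-edge. Propagating this with left-compression, and using the lower bound $a\ge t^{0.01}\binom{t-(i+1)}{r-(i+1)}$ to guarantee that enough of these forced non-edges pass through $t-i$, should either contradict maximality outright or force $e(t-i)\ge t^{-\gamma/10r}a$. Making this swapping argument quantitative — in particular handling the configurations where the swapped $r$-set lands on yet another non-edge — is the delicate step, and I expect it to be the crux of the claim.
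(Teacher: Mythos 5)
You have correctly identified the candidate twin pairs (a pair inside $I$, and $\{1,t-i\}$ from \Cref{Htwins}) and the general shape of the argument, but there are two genuine gaps. First, your dichotomy is on the \emph{spread} $e(t)-e(t-(i-1))$, whereas the paper's is on the \emph{magnitude} of $e(t-(i-1))$ itself, and this difference matters in exactly the sub-regime you try to patch. When $i=2$ and $a$ is close to $\binom{t-2}{r-2}$, the pair $(t,t-1)$ has $|N_H(t,t-1)|=\binom{t-2}{r-2}-a=o(t^{r-2})$, and your appeal to \Cref{prop:prelims-new} \ref{itm:useful-e-t-1} does not rescue it: that statement gives $e(t-1)\ge ca$ for some constant $c>0$, which is entirely compatible with a large spread (e.g.\ $e(t)=a$, $e(t-1)=a/2$), so your ``spread'' case is not excluded there and neither of your two cases covers it. In the paper's split, the case hypothesis $e(t-(i-1))\le t^{-\gamma/20r}a$ directly \emph{contradicts} $e(t-1)=\Omega(a)$, which forces $a\le\frac{7}{8}\binom{t-2}{r-2}$ and hence $|N_H(t,t-(i-1))|\ge\frac18\binom{t-2}{r-2}$; the weight gap then follows from Frankl's identity (\Cref{lem:easy} \ref{itm:frankl}) together with $e(t)=\Omega(a)$ from \Cref{prop:prelims-new} \ref{itm:useful-e-t}, since $e(t)-e(t-(i-1))=\Omega(a)$ in that case.

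Second, your ``flat'' case --- which you yourself flag as the crux --- is left unproven, and the propagation plan you sketch does not obviously close: the forced non-edges $(F\setminus u)\cup\{v_0\}$ contain one \emph{more} vertex of $I$, not the vertex $t-i$, so counting them gives no lower bound on $e(t-i)$. The missing ingredient is the standing hypothesis of the proof of \Cref{thm:non-edges-big-range}: the whole argument runs under the assumption that some non-edge of $G$ does \emph{not} contain $I$, i.e.\ contains at most $i-1$ vertices of $I$. With this in hand, no quantitative propagation is needed. Supposing $e(t-i)\le t^{-\gamma/10r}a\le e(t-(i-1))\,t^{-\gamma/20r}$, every vertex outside $I$ has $e(\cdot)\le e(t-i)$ and hence weight within $o\bigl(e(t-(i-1))\,t^{-(r-2)}\bigr)\cdot w(t)$ of $w(1)$, so by \Cref{prop:prelims-old} \ref{itm:useful-w-x} any non-edge with at most $i-1$ vertices of $I$ has weight strictly larger than that of any edge containing $I$; a single swap of these two $r$-tuples then increases the weight, contradicting the $\lambda$-maximality of $G$ in one shot. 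This refutes the smallness of $e(t-i)$, i.e.\ $e(t-i)=\Omega(t^{-\gamma/10r}a)$, after which the pair $x=1$, $y=t-i$ works: they are twins by \Cref{Htwins}, $|N_H(1,t-i)|=\binom{t-2}{r-2}-O(t^{r-i-2})$ since all non-edges of $H$ contain $I$, and $w(1)-w(t-i)=\Omega(t^{-\gamma/10r}a/t^{r-1})$ by \Cref{prop:prelims-old} \ref{itm:useful-missing-1} and \ref{itm:useful-w-x}. (A minor further caution: your blanket formula $w(x)-w(y)=\Theta\bigl((e(y)-e(x))/t^{r-1}\bigr)$ is not valid for arbitrary pairs, since it subtracts two $\Theta$-estimates with different implicit constants; your fallback to Frankl's identity is indeed what the paper uses.)
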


		\begin{proof}
			We consider two cases. First suppose that $e(t-(i-1)) \le t^{-\gamma/20r} \cdot a$. We will show that $x = t - (i-1)$ and $y = t$ satisfy the properties of the claim. First, note that by choice of $H$, these vertices are twins in $H$. Now observe that if $i \ge 3$, then $a \le \binom{t-3}{r-3} = O(t^{r-3})$. But when $i=2$, if $\frac{7}{8} \binom{t-2}{r-2} \le a$, then by hypothesis and \Cref{prop:prelims-new} \ref{itm:useful-e-t-1}, we would have $e(t-1) = \Omega(a)$, a contradiction. So $a \le \frac{7}{8} \binom{t-2}{r-2}$. So, as by definition of $H$ every edge in $\overline{H}$ contains $x$ and $y$ and $|\overline{H}| = a$, we have $|N_H(x,y)| \ge \frac{1}{8}\binom{t-2}{r-2}$.
			
			We will now show that $w(t-(i-1)) - w(t) = \Omega\left(\frac{a}{t^{r-1}}\right)$. Observe that $w(N_G(t-(i-1),t))= \Omega(1)$. Indeed, as $|N_G(t-(i-1),t)| \ge |N_H(t-(i-1),t)|$, at least $\frac{1}{8} \binom{t-2}{r-2}$ $(r-2)$-tuples are present in $N_G(t-(i-1),t)$. By \Cref{prop:prelims-new} \ref{itm:useful-w-t-large} (and using that $w$ is decreasing) each of them has weight $\Omega(t^{-(r-2)})$. 
			
			So using this and \Cref{lem:easy} \ref{itm:frankl} applied to $w$ as a weighting of $G$, we have 
				\begin{align*}
					w(t-(i-1)) - w(t) &= (w(N_t(t-(i-1))) - w(N_{t-(i-1)}(t)))/w(N_G(t-(i-1),t)) \\ &= \Omega\big(w(N_{t}(t-(i-1)) \setminus N_{t-(i-1)}(t))\big).
				\end{align*}
			As $G$ is left-compressed, 	$N(t) \subseteq N(t-(i-1))$ and so this is at least
				$$\Omega\left(\frac{e(t)-e(t-(i-1))}{t^{r-1}} \right) 
				=  \Omega\left(\frac{a}{t^{r-1}}\right),$$ 
			where we used $w(t) = \Omega(t^{-1})$ by \Cref{prop:prelims-new} \ref{itm:useful-w-t-large}, $e(t - (i-1)) \le t^{-\gamma/20r}\cdot a$ by assumption and \Cref{prop:prelims-new} \ref{itm:useful-e-t}.

			Now suppose that $e(t-(i-1)) \ge t^{-\gamma/20r} \cdot a$. Suppose, in order to obtain a contradiction, that $e(t-i)  \le t^{-\gamma/10r} \cdot a \le e(t - (i-1)) t^{-\gamma/10r}$. Then (similarly to the calculation of \eqref{eq:annoying-calc}), if $u = (u_1,\ldots,u_r)$ is a non-edge of $G$ containing at most $(i-1)$ vertices of $I$, then using \Cref{prop:prelims-old} \ref{itm:useful-missing-1} and \ref{itm:useful-w-t-large}  we have
				\begin{align*}
					w(u) &\ge w(t)w(t-1)\cdots w(t-(i-2))\left(w(1) - O\left(\frac{e(t-i) - e(1)}{t^{r-2}}\right)w(t)\right)^{r-(i-1)}\\
					&\ge w(t)w(t-1)\cdots w(t-(i-2))\left(w(1) - o\left(\frac{e(t-(i-1))}{t^{r-2}}\right)w(t)\right)w(1)^{r-(i-2)}\\
					&>  w(t)\cdots w(t-(i-1))w(1)^{r-(i-2)}.
				\end{align*}
			So replacing $u$ with any non-edge containing $I$ would increase the weight of $G$. Thus, all non-edges of $G$ contain $I$, a contradiction. So $e(t-i) = \Omega\left(t^{-\gamma/10r} \cdot a\right)$.
			
			Hence $w(1) - w(t-i) = \Omega(\frac{t^{-\gamma/10r}a}{t^{r-1}})$ by \Cref{prop:prelims-old} \ref{itm:useful-missing-1} and \ref{itm:useful-w-x}. Then we may take $x = 1, y = t-i$; indeed, by Claim~\ref{Htwins} we have that $1$ and $t-i$ are twins in $H$, and $|N_H(1, t-i)| = \binom{t-2}{r-2} - O(t^{r-i-2})$ (because all non-edges contain $I$).
		\end{proof}

		Let $x, y$ be as in \Cref{cl:pick-two-vs}. Define $w'$ as follows.
		\begin{align*}
			w'(z) = \left\{
				\begin{array}{ll}
					w(z) & z \neq x, y \\
					\frac{1}{2} \cdot \left(w(x)) + w(y)\right) & z \in \{x, y\}.
				\end{array}
			\right.
		\end{align*}
		Note that $w'$ is a legal weighting. Since $x$ and $y$ are twins in $H$, the contribution of edges that contain none or exactly one of them to the weight of $H$ is the same in $w$ and in $w'$. We thus have the following.
		\begin{align}\label{eqn:weight-gained-case-1}
		\begin{split}
			w'(H) - w(H) 
			& = w(N_H(x, y)) \cdot \left(\left(\frac{w(x) + w(y)}{2}\right)^2 - w(x) w(y)\right) \\
			& = w(N_H(x, y)) \cdot \left(\frac{w(x) - w(y)}{2}\right)^2 \\
			& = \Omega \left(\frac{t^{-\gamma/20r} a^2}{t^{2(r-1)}}\right),
		\end{split}
		\end{align}
		where the last equality follows as $w(N(x,y)) = \Omega(1)$ (as $|N(x,y)| = \Omega(t^{r-2})$), and each edge in $N(x,y)$ has weight $\Omega(t^{-(r-2)})$) and by the assumption on $x$ and $y$. It follows from \Cref{cl:weight-lost-general-case} that $w(G) < w'(H)$, contradicting the choice of $G$ and $w$. This completes the proof of \Cref{thm:non-edges-big-range}.
	\end{proof}

\section{Understanding the structure when $a$ is small}\label{sec:a-small}

	The main result of the section is the following lemma, which evaluates the Lagrangian of $G$ when $G$ is a left-compressed subgraph of $[t]^{(r)}$ with not too many non-edges.

	\begin{lem} \label{lem:eval-lag}
		Let $r \ge 3$, let $t \in \N$, and let $a \le \binom{t-2}{r-2}$. Suppose that $G$ is a subgraph of $[t]^{(r)}$ with $\binom{t}{r} - a$ edges and that $G$ is left-compressed. Then
		\begin{equation}
		\label{eq:lagsq}
			\lambda(G) = \mu_0 - \frac{a}{t^r} + \frac{1}{2\mu_2 t^{2(r-1)}} \cdot \sum_x e(x)^2 - \frac{r^2a^2}{2\mu_2 t^{2r-1}} + O\left(a^3 t^{-3r+4}\right),
		\end{equation}
		where $\mu_i = \binom{t-i}{r-i}\frac{1}{t^{r-i}}$ and $e(x)$ is the number of non-edges incident with vertex $x$.
	\end{lem}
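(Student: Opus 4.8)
The plan is to evaluate $\lambda(G)=w(G)$ for a maximal decreasing weighting $w$ by expanding around the uniform weighting. Write $w(x)=\tfrac{1}{t}(1+z_x)$, so that $\sum_x z_x=0$, and split $w(G)=w([t]^{(r)})-w(\overline G)$, where $\overline G=[t]^{(r)}\setminus G$ has $a$ edges. Expanding each product $\prod_{x}(1+z_x)$ and collecting by the number of $z$-factors gives
\[
  w([t]^{(r)})=\sum_{k=0}^{r}\mu_k t^{-k}\,e_k(z),\qquad
  w(\overline G)=t^{-r}\sum_{k=0}^{r}\sum_{|T|=k} e(T)\,\textstyle\prod_{x\in T} z_x,
\]
where $e_k(z)$ is the $k$-th elementary symmetric polynomial of $(z_x)_{x\in[t]}$ and $e(T)=|\{f\in\overline G:T\subseteq f\}|$ (so $e(\emptyset)=a$ and $e(\{x\})=e(x)$). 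Since $e_1(z)=0$ and $e_2(z)=-\tfrac12\sum_x z_x^2$, the terms of total degree $\le1$ contribute $\mu_0-at^{-r}$, and I isolate the quadratic/linear part $L(z):=-\tfrac{\mu_2}{2t^2}\sum_x z_x^2-\tfrac{1}{t^r}\sum_x e(x)z_x$, writing the exact identity $w(G)=\mu_0-\tfrac{a}{t^r}+L(z)+R(z)$, where $R(z)$ gathers the degree-$\ge3$ terms of $w([t]^{(r)})$ and the degree-$\ge2$ terms of $w(\overline G)$.

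Next I maximise the strictly concave $L$ over the hyperplane $\{\sum_x z_x=0\}$. A Lagrange-multiplier computation gives the unique maximiser $z^*_x=-\mu_2^{-1}t^{-(r-2)}\big(e(x)-\tfrac{ra}{t}\big)$ and optimal value
\[
  Q^*:=\max_{\sum_x z_x=0}L(z)=\frac{1}{2\mu_2 t^{2(r-1)}}\Big(\sum_x e(x)^2-\frac{r^2a^2}{t}\Big),
\]
which is exactly the pair of middle terms of \eqref{eq:lagsq}; note that both the quadratic term of $w([t]^{(r)})$ and the linear term of $w(\overline G)$ feed into $Q^*$. For the lower bound I evaluate at $z^*$: one checks $1+z^*_x>0$ for all $x$ (using $e(x)\le a\le\binom{t-2}{r-2}=\mu_2 t^{r-2}$), so $z^*$ is a legal weighting and $\lambda(G)\ge w(G)|_{z^*}=\mu_0-\tfrac{a}{t^r}+Q^*+R(z^*)$. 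For the upper bound I evaluate at the true maximiser $z^{\max}$ and invoke concavity, which gives $L(z^{\max})\le Q^*$ directly, whence $\lambda(G)=\mu_0-\tfrac{a}{t^r}+L(z^{\max})+R(z^{\max})\le\mu_0-\tfrac{a}{t^r}+Q^*+R(z^{\max})$. It remains only to prove $|R(z^*)|,|R(z^{\max})|=O(a^3t^{-3r+4})$.

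To bound the remainders I need pointwise control of the $z_x$. For $z^*$ this is explicit. For $z^{\max}$ I use left-compressedness together with \Cref{lem:easy}: the argument of \Cref{prop:prelims-new}~\ref{itm:useful-w-1} (which relies only on the uniform lower bound for $\lambda(G)$ and on $\lambda(G)=w(N(1))/r$) gives $z_1=O(at^{-(r-1)})$, and the stationarity identity $w(N(x))=w(N(1))$ from \Cref{lem:easy}~\ref{itm:lag-w-nb}, expanded to first order, yields $z_x=z_1-(1+o(1))\mu_2^{-1}t^{-(r-2)}(e(x)-e(1))$ exactly as in \Cref{prop:prelims-old}~\ref{itm:useful-w-x}. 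In both cases $|z_x|=O\big(e(x)t^{-(r-2)}+at^{-(r-1)}\big)$, so $\sum_x|z_x|=O(at^{-(r-2)})$. Feeding this into $R$ and using $\sum_x e(x)=ra$, $\sum_x e(x)^2=O(a^2)$ and $\sum_x e(x)^3=O(a^3)$ controls every term: the degree-$k$ part of $w([t]^{(r)})$ is $O\big((\sum_x|z_x|)^k t^{-k}\big)=O(a^kt^{-k(r-1)})$, equal to $O(a^3t^{-3r+3})$ for $k=3$ and smaller for $k\ge4$; the degree-$k$ part of $w(\overline G)$ is $O\big(t^{-r}\sum_x e(x)|z_x|^k\big)$, whose $k=2$ term is the dominant error of order exactly $a^3t^{-3r+4}$ (contributed through the heaviest vertices) and whose $k\ge3$ terms are smaller. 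Combining the two bounds pins $\lambda(G)$ down to within $O(a^3t^{-3r+4})$.

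I expect this last step to be the main obstacle. The delicate point is that the weights are only close to $1/t$ in an \emph{averaged} sense: an individual $z_x$ can be of constant order when $e(x)=\Theta(t^{r-2})$, so the truncation error is not governed by one small parameter but must be traced through the moment sums $\sum_x e(x)^k$. In particular the degree-two term of $w(\overline G)$ is genuinely of size $\Theta(a^3t^{-3r+4})$, so the stated error term is tight and no cruder bookkeeping will do.
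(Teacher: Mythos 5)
Your proposal is correct, and the computations check out: the expansion $w([t]^{(r)})=\sum_k\mu_k t^{-k}e_k(z)$ with $e_1(z)=0$, $e_2(z)=-\tfrac12\sum_x z_x^2$, the maximiser $z^*_x=-\mu_2^{-1}t^{-(r-2)}\bigl(e(x)-\tfrac{ra}{t}\bigr)$ of $L$ on the zero-sum hyperplane with value $Q^*=\frac{1}{2\mu_2 t^{2(r-1)}}\bigl(\sum_x e(x)^2-\frac{r^2a^2}{t}\bigr)$, the legality of $z^*$ (via $e(x)\le a\le\mu_2t^{r-2}$), and the remainder bookkeeping ($\sum_x|z_x|=O(at^{-(r-2)})$ kills the clique tail at order $a^3t^{-3r+3}$, while $t^{-r}\sum_x e(x)z_x^2=O(a^3t^{-3r+4})$ dominates the complement tail) all hold and reproduce \eqref{eq:lagsq}. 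However, your organisation genuinely differs from the paper's. The paper writes $\lambda(G)=\lambda(\cl{t}{r})-\bigl(\lambda(\cl{t}{r})-w(\cl{t}{r})\bigr)-w(\overline{G})$ and evaluates every piece at the \emph{actual} maximal weighting, which forces it to first prove the two-sided asymptotic $\delta(x)=\frac{e(x)}{\mu_2 t^{r-1}}\bigl(1+O(at^{-(r-2)})\bigr)$ from the Frankl--R\"odl identity (its Claim 4.2) and then substitute into both the clique penalty and $w(\overline{G})$. Your variational sandwich decouples the two jobs: the lower bound needs no information about the true maximiser at all (plug in the explicit trial weighting $z^*$), and the upper bound gets the main term $L(z^{\max})\le Q^*$ from concavity alone, so the stationarity-based analysis of $z^{\max}$ is needed only as a crude one-sided bound $|z_x|=O(e(x)t^{-(r-2)}+at^{-(r-1)})$ to control $R$, not with multiplicative $(1+O(at^{-(r-2)}))$ precision. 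That buys robustness in the main term and makes the tightness of the $a^3t^{-3r+4}$ error transparent, as you observe; the paper's route buys a single evaluation with no trial-weighting legality check. Two shared caveats, which you partially flag and which affect the paper equally: the stationarity identities of \Cref{lem:easy} require $w(x)>0$ for all $x$, which can degenerate at the endpoint $a=\binom{t-2}{r-2}$ (there a maximal weighting may vanish on vertex $t$), though at that endpoint the error term swallows the main terms so \eqref{eq:lagsq} is vacuously unaffected; and \Cref{prop:prelims-old} and \Cref{prop:prelims-new} are stated under the hypothesis that $G$ maximises the Lagrangian, which \Cref{lem:eval-lag} does not assume --- your explicit observation that the needed parts (the bound $e(1)\le ra/t$ from left-compressedness, the $w(1)$ estimate from the uniform lower bound plus $\lambda(G)=w(N(1))/r$, and the first-order expansion of $w(N(x))=w(N(1))$) survive without that hypothesis is correct and is a necessary repair that the paper leaves implicit.
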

	Note that this bound becomes effective when $a = o(t^{r-2})$, as then the error term is smaller than the third term.

	Before proving \Cref{lem:eval-lag} we will state some corollaries. It may be helpful to recall the definitions given in \Cref{subsec:sqs}. The main conclusion that we draw from \Cref{lem:eval-lag} is the following immediate corollary. Recall that $\overline{G} := [t]^{(r)} \setminus G$). 

	\begin{cor} \label{cor:complement-max-degrees-squared}
		Let $r \ge 3$, $t \in \N$ be large, $a < \binom{t-2}{r-2}$ and $m := \binom{t}{r} - a$.
		Suppose that $G$ maximises the Lagrangian among $r$-graphs with $m$ edges. Then, up to relabelling of the vertices, $G$ is a left-compressed subgraph of $\cl{t}{r}$, and
		\[
			P_2(\overline{G}) = (1 + O(a t^{-(r-2)})) \cdot P_2(r,a,t).
		\]
		In particular, if $a = o(t^{(r-2)/3})$ then $P_2(\overline{G}) = P_2(r,a,t).$ 
	\end{cor}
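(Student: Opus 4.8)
The plan is to feed the structural description of $G$ into \Cref{lem:eval-lag} and read off that, among left-compressed $r$-graphs of a fixed size, maximising $\lambda$ is equivalent---up to the error term---to maximising the sum of degrees squared of the complement. First I would invoke \Cref{cor:t-vs-compressed}, which applies since $\binom{t}{r} - \binom{t-2}{r-2} < m \le \binom{t}{r}$, to assume (after relabelling) that $G$ is a left-compressed subgraph of $\cl{t}{r}$. Observing that $\sum_x e(x)^2 = P_2(\overline{G})$, \Cref{lem:eval-lag} then gives
\[
    \lambda(G) = C(a) + \frac{P_2(\overline{G})}{2\mu_2 t^{2(r-1)}} + O\!\left(a^3 t^{-3r+4}\right),
\]
where $C(a) := \mu_0 - \tfrac{a}{t^r} - \tfrac{r^2 a^2}{2\mu_2 t^{2r-1}}$ depends only on $a$ (and on $r,t$), and $\mu_2 = \Theta(1)$. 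The upper bound $P_2(\overline{G}) \le P_2(r,a,t)$ is immediate, since $\overline{G} \subseteq [t]^{(r)}$ has exactly $a$ edges.

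For the matching lower bound I would produce a left-compressed comparison graph $G'$ with $m$ edges and $P_2(\overline{G'}) = P_2(r,a,t)$. Let $H^\ast \subseteq [t]^{(r)}$ with $|H^\ast| = a$ attain $P_2(r,a,t)$. The key point---and the step I expect to require the most care, since the Ahlswede--Katona counterexample quoted in the introduction warns that $P_2$-optimisers need not be lex or colex---is that one may take $H^\ast$ to be \emph{up-compressed} (closed under replacing a vertex by a larger one). A short direct computation shows that a single compression $C_{xy}$ changes $\sum_v d(v)^2$ by a nonnegative amount ($2k(q-o)$ in suitable notation, with $k \ge 0$ the number of edges relocated and $q \ge o \ge 0$ counting the edges through the smaller vertex and through both, respectively); applying this with the order of $[t]$ reversed lets me up-compress $H^\ast$ without decreasing $P_2$, hence without changing it. Then $G' := [t]^{(r)} \setminus H^\ast$ is the complement of an up-compressed family, so it is left-compressed, has $m$ edges, and satisfies $P_2(\overline{G'}) = P_2(r,a,t)$; in particular \Cref{lem:eval-lag} also applies to $G'$.

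Finally I would combine the two graphs. Since $G$ maximises the Lagrangian, $\lambda(G) \ge \lambda(G')$; subtracting the two instances of \Cref{lem:eval-lag} (the $C(a)$ terms cancel) yields
\[
    \frac{P_2(\overline{G}) - P_2(r,a,t)}{2\mu_2 t^{2(r-1)}} \ge -O\!\left(a^3 t^{-3r+4}\right),
\]
so $P_2(\overline{G}) \ge P_2(r,a,t) - O(a^3 t^{-r+2})$, using $\mu_2 t^{2(r-1)} = \Theta(t^{2r-2})$. To turn this additive error into the claimed multiplicative one, I would use the crude bound $P_2(r,a,t) \ge a^2$, obtained by routing all $a$ edges through a single vertex (valid as $a < \binom{t-2}{r-2} \le \binom{t-1}{r-1}$); then $O(a^3 t^{-r+2}) = O(a t^{-(r-2)}) \cdot a^2 \le O(a t^{-(r-2)}) \cdot P_2(r,a,t)$, which together with the upper bound gives $P_2(\overline{G}) = (1 + O(a t^{-(r-2)})) P_2(r,a,t)$. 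For the last sentence, when $a = o(t^{(r-2)/3})$ we have $a^3 t^{-r+2} = o(1)$, so the additive error drops below $1$; as $P_2$ is integer-valued and sandwiched between $P_2(r,a,t) - o(1)$ and $P_2(r,a,t)$, it must equal $P_2(r,a,t)$ for large $t$.
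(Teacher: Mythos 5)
Your proof is correct and follows essentially the same route as the paper's: reduce to a left-compressed $G \subseteq [t]^{(r)}$ via \Cref{cor:t-vs-compressed}, then read off from \Cref{lem:eval-lag} that Lagrangian-maximality forces $P_2(\overline{G})$ to be within $O(a^3 t^{-r+2})$ of $P_2(r,a,t)$. You are in fact more careful than the paper's one-line deduction (``\eqref{eq:lagsq} takes its maximal value''): your up-compression step showing that a $P_2$-maximiser may be taken with left-compressed complement (so that \Cref{lem:eval-lag} legitimately applies to the comparison graph $G'$), the bound $P_2(r,a,t) \ge a^2$ converting the additive error into the stated multiplicative one, and the integrality argument for the $a = o(t^{(r-2)/3})$ clause are all details the paper leaves implicit.
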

	
	\begin{proof}
		By \Cref{cor:t-vs-compressed}, $G$ is isomorphic to a left-compressed subgraph of $[t]^{(r)}$ with $m$ edges. Observe that $|E(\overline{G})| = a$, $\sum_{x \in G} e(x)^2 = P_2(\overline{G})$ and $P_2(\overline{G}) = O(a^2)$. As $G$ maximises the Lagrangian amongst $r$-graphs with $m$ edges, \Cref{eq:lagsq} takes its maximal value over all $r$-graphs with $m$-edges. This occurs when
			\[
				P_2(\overline{G}) = (1 + O(a t^{-(r-2)})) \cdot P_2(r,a,t).\qedhere
			\]
	\end{proof}
	Using this, we can complete the proof of \Cref{thm:range} via the following corollary.
	\begin{cor} 
		Let $r \ge 3$, let $t \in \N$ be large, let $a$ be such that $a \notin \{3, \ldots, r+1\}$ and $a = o(t^{1/3})$, and let $m := \binom{t}{r} - a$. Suppose that $G$ maximises the Lagrangian among $r$-graphs with $m$ edges. Then $G \simeq \col{|G|}{r}$.
	\end{cor}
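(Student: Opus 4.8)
The plan is to split the allowed values of $a$ into the two ranges $a \in \{0,1,2\}$ and $a \ge r+2$ (together these are exactly $a \notin \{3,\ldots,r+1\}$). The first range is immediate: \Cref{cor:very-small-a} already gives $G \simeq \col{|G|}{r}$. So I would concentrate on $a \ge r+2$, where the idea is to transfer the problem from maximising the Lagrangian of $G$ to maximising the sum of degrees squared of the complement $\overline{G} = \cl{t}{r} \setminus G$, and then to read off the structure of $\overline{G}$ from \Cref{prop:i-1-intersecting-families}. First I would invoke \Cref{cor:complement-max-degrees-squared}, which supplies two facts: after relabelling, $G$ is a left-compressed subgraph of $\cl{t}{r}$, so $\overline{G} \subseteq [t]^{(r)}$ has exactly $a$ edges; and, since $a = o(t^{1/3})$ forces $a = o(t^{(r-2)/3})$ for every $r \ge 3$, we have $P_2(\overline{G}) = P_2(r,a,t)$ exactly. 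Thus $\overline{G}$ maximises $P_2(\cdot)$ among $r$-graphs on $[t]$ with $a$ edges.

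The key bridging step is to upgrade this to $\overline{G}$ being a maximiser among \emph{all} $r$-graphs with $a$ edges, so that \Cref{prop:i-1-intersecting-families} applies. By that proposition, every global maximiser is either a subgraph of $\cl{r+1}{r}$ or has an $(r-1)$-set contained in all its edges; in either case it uses at most $\max\{r+1,\,a+r-1\}$ vertices, which is at most $t$ for large $t$ since $a = o(t^{1/3})$. Hence any global maximiser embeds into $[t]^{(r)}$, giving $P_2(r,a) = P_2(r,a,t)$, and so $\overline{G}$ — which attains $P_2(r,a,t)$ — is itself a global maximiser. I then apply \Cref{prop:i-1-intersecting-families} to $\overline{G}$: because $a \ge r+2 > r+1$, the clique alternative is impossible (a subgraph of $\cl{r+1}{r}$ has at most $r+1$ edges), so $\overline{G}$ must have a fixed set $S$ of $r-1$ vertices lying in every edge. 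That is, $\overline{G}$ is a star whose edges are $S \cup \{v\}$ over $a$ distinct leaves $v$.

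Finally I would convert this structural statement into the desired isomorphism. Since $a \le t-(r-1)$ for large $t$, the complement $\overline{\col{|G|}{r}}$ consists of the top $a$ sets in colex order, which all contain the $(r-1)$-set $\{t-r+2,\ldots,t\}$ and hence likewise form a star with $a$ edges in $[t]^{(r)}$. Any two stars with $a$ edges in $[t]^{(r)}$ are related by a permutation of $[t]$ (match the centres, then the leaves, then the remaining vertices), and such a permutation carries $\overline{G}$ to $\overline{\col{|G|}{r}}$, hence $G$ to $\col{|G|}{r}$, so $G \simeq \col{|G|}{r}$.

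I expect the main obstacle to be precisely the bridging step: \Cref{prop:i-1-intersecting-families} characterises maximisers of $P_2$ over \emph{all} $r$-graphs, whereas \Cref{cor:complement-max-degrees-squared} only tells us that $\overline{G}$ maximises $P_2$ over $r$-graphs supported on $[t]$. What makes this harmless is that for $a = o(t^{1/3})$ the optimal configurations are so small (at most $a+r-1$ vertices) that restricting to $[t]$ costs nothing, so the two optimisation problems share both their value and their maximisers. I would also note that the exclusion of $3 \le a \le r+1$ is exactly what is needed to rule out the clique alternative in \Cref{prop:i-1-intersecting-families}, which for those values of $a$ can tie with (or differ from) the star and need not correspond to a colex graph.
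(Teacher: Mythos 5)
Your proposal is correct and follows essentially the same route as the paper: reduce via \Cref{cor:t-vs-compressed} and \Cref{cor:complement-max-degrees-squared} to the statement that $\overline{G}$ attains $P_2(r,a,t) = P_2(r,a)$, then apply \Cref{prop:i-1-intersecting-families} to conclude that $\overline{G}$ is a star, which forces $G \simeq \col{|G|}{r}$. The differences are only presentational: you make explicit two points the paper leaves implicit (that a global $P_2$-maximiser with $a = o(t^{1/3})$ edges fits inside $[t]$, and that $a \ge r+2$ rules out the $\cl{r+1}{r}$ alternative), handle $a \in \{0,1,2\}$ separately via \Cref{cor:very-small-a}, and identify the star with the colex complement by a vertex permutation where the paper uses left-compressedness to pin it down exactly.
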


	\begin{proof}
		By \Cref{cor:t-vs-compressed}, we may assume that $G \subseteq [t]^{(r)}$ and that $G$ is left-compressed. Then using  \Cref{cor:complement-max-degrees-squared}  as $a = o(t^{1/3})$, we have that $P_2(\overline{G}) = P_2(r,a,t) = P_2(r,|G|)$. Now by \Cref{prop:i-1-intersecting-families}, $\overline{G}$ is a \emph{star}, i.e.\ all its edges contain a fixed set of $r-1$ vertices, namely $\{t-(r-2), \ldots, t\}$ (as $G$ is left-compressed). This determines $G$ uniquely and implies that $G \simeq \col{|G|}{r}$.
	\end{proof}
		The remainder of the section is devoted to proving \Cref{lem:eval-lag}. Our plan is as follows: we first obtain good estimates for $w(x)$ in terms of $e(x)$ (the number of edges of $\overline{G}$ incident with $x$), and then we estimate the weight of the non-edges (i.e.\ we estimate $w(\overline{G}) = w(\cl{t}{r}) - w(G)$), and also the difference between $w(\cl{t}{r})$ and $\lambda(\cl{t}{r})$. Putting these two estimates together, we obtain an estimate for $w(G)$.
		
	\begin{proof}[Proof of \Cref{lem:eval-lag}]
		Let $w$ be a decreasing weighting of $G$ such that $w(G) = \lambda(G)$. Let $\alpha$ and $\delta(x)$, for $x \in [t]$, be the unique values satisfying $w(x) = \alpha - \delta(x)$, and  
		\begin{align} \label{eqn:def-delta-1}
			\mu_i := \binom{t-i}{r-i}t^{-(r-i)} \qquad \qquad 
			\delta(1) := \frac{e(1)}{\mu_2 t^{r-1}}.
		\end{align}
		Note that
		$
			1 = \sum_x w(x) = \sum_x (\alpha - \delta(x)) = t \alpha - \sum_x \delta(x),
		$
		which implies that
		\begin{equation} \label{eqn:alpha}
			\alpha = \frac{1}{t}\cdot \left(1 + \sum_x \delta(x)\right).
		\end{equation}
		\begin{claim} \label{cl:eval-alpha-delta}
			The following estimates hold.
			\begin{enumerate}[]
				\item \label{itm:cl-alpha}
					$\alpha = t^{-1}\cdot\left(1 + O(a t^{-(r-1)})\right)$.
				\item \label{itm:cl-sum-delta}
					$\sum_x \delta(x) = O(a t^{-(r-1)})$.
				\item \label{itm:cl-w}
					$w(x_1) \cdots w(x_s) = t^{-s}\cdot \left(1 + O(a t^{-(r-2)})\right)$ for every $x_1, \ldots, x_s \in [t]$, where $s \le r$.
				\item \label{itm:cl-delta}
					$\delta(x) = \frac{e(x)}{\mu_2 t^{r-1}}\cdot \left(1 + O(a t^{-(r-2)})\right)$ for all $x \in [t]$.
			\end{enumerate}
		\end{claim}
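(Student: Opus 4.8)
The plan is to establish the four estimates in the order \ref{itm:cl-alpha}, \ref{itm:cl-sum-delta}, \ref{itm:cl-w}, \ref{itm:cl-delta}, getting the first three almost for free from the preliminary bounds and reserving the real work for \ref{itm:cl-delta}, where the exact constant $1/(\mu_2 t^{r-1})$ has to be pinned down. The unifying observation is that $w$ is decreasing, so $\delta(x) = \alpha - w(x)$ is nonnegative and nondecreasing in $x$, and that $e(x) \le a$ for every $x$ (each vertex lies in at most $a$ non-edges).

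First I would dispatch \ref{itm:cl-alpha} and \ref{itm:cl-sum-delta} together. Since $\alpha = w(1) + \delta(1)$, and $\delta(1) = e(1)/(\mu_2 t^{r-1})$ with $e(1) \le ra/t$ by \Cref{prop:prelims-old} \ref{itm:useful-missing-1} and $\mu_2 = \Theta(1)$, we get $\delta(1) = O(at^{-r})$; combined with $w(1) = t^{-1} + O(at^{-r})$ (\Cref{prop:prelims-new} \ref{itm:useful-w-1}) this gives $\alpha = t^{-1}\big(1 + O(at^{-(r-1)})\big)$, which is \ref{itm:cl-alpha}. Then \ref{itm:cl-sum-delta} is immediate from \eqref{eqn:alpha}, as $\sum_x \delta(x) = t\alpha - 1 = O(at^{-(r-1)})$.

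For \ref{itm:cl-w}, the key intermediate bound is $\delta(x) = O(at^{-(r-1)})$ for every $x \in [t]$. Since $\delta$ is nonnegative and nondecreasing it suffices to bound $\delta(x) - \delta(1) = w(1) - w(x)$. For $x < t$ this equals $\Theta\big((e(x)-e(1))t^{-(r-2)}w(t)\big)$ by \Cref{prop:prelims-old} \ref{itm:useful-w-x}, and using $e(x) \le a$ and $w(t) = O(t^{-1})$ (from $w(t) \le w(1)$) this is $O(at^{-(r-1)})$; the case $x = t$ is identical using the first formula of \Cref{prop:prelims-old} \ref{itm:useful-w-x}. Writing $w(x_i) = \alpha - \delta(x_i) = t^{-1}\big(1 + O(at^{-(r-2)})\big)$ and multiplying the $s \le r$ factors (a bounded product, since $at^{-(r-2)} = O(1)$) then yields \ref{itm:cl-w}.

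The heart of the argument is \ref{itm:cl-delta}. I would apply the Frankl--R\"odl identity \eqref{eq:frankl} to the pair $1, x$ (legitimate since every weight is positive, as $w$ is decreasing and $w(t) > 0$ by \Cref{prop:prelims-new} \ref{itm:useful-w-t-large}), obtaining $\delta(x) - \delta(1) = w(1) - w(x) = \big(w(N_x(1)) - w(N_1(x))\big)/w(N(1,x))$. Expanding the numerator as $\sum_T w(T)\big(\mathbbm{1}[T\cup\{1\}\in G] - \mathbbm{1}[T\cup\{x\}\in G]\big)$ over $(r-1)$-sets $T$ avoiding $1$ and $x$, left-compression eliminates every $-1$ term (if $T\cup\{x\}\in G$ then $T\cup\{1\} = C_{1x}(T\cup\{x\})\in G$), so the sum runs over $S_x := \{T : T\cup\{1\}\in G,\ T\cup\{x\}\notin G\}$. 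The crucial combinatorial point is $|S_x| = e(x) - e(1)$: writing $d$ for the number of non-edges containing both $1$ and $x$, the sets $T$ with $T\cup\{x\}\notin G$ number $e(x) - d$, and those among them with $T\cup\{1\}\notin G$ are exactly the non-edges through $1$ but not $x$ (left-compression forces $T\cup\{1\}\notin G \Rightarrow T\cup\{x\}\notin G$), of which there are $e(1) - d$; subtracting gives $e(x) - e(1)$. By \ref{itm:cl-w} with $s = r-1$ the numerator is therefore $(e(x)-e(1))\,t^{-(r-1)}\big(1 + O(at^{-(r-2)})\big)$. For the denominator, $N(1,x)$ omits only the at most $e(1) \le ra/t$ non-edges through both $1$ and $x$, so $|N(1,x)| = \binom{t-2}{r-2}\big(1 - O(at^{-(r-1)})\big)$ and hence $w(N(1,x)) = \mu_2\big(1 + O(at^{-(r-2)})\big)$, again by \ref{itm:cl-w}. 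Dividing, and adding back $\delta(1) = e(1)/(\mu_2 t^{r-1})$, yields $\delta(x) = e(x)/(\mu_2 t^{r-1})\cdot\big(1 + O(at^{-(r-2)})\big)$ since $e(x) - e(1) \le e(x)$. I expect this final step to be the main obstacle: obtaining the exact constant (rather than the mere $\Theta$ supplied by \Cref{prop:prelims-old} \ref{itm:useful-w-x}) forces careful relative-error bookkeeping in both numerator and denominator, and one must verify that the defect of $|N(1,x)|$ from $\binom{t-2}{r-2}$ contributes only an $O(at^{-(r-1)})$ relative error and not a spurious $O(t^{-1})$ floor.
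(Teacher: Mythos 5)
Your proposal is correct and follows essentially the same route as the paper: items \ref{itm:cl-alpha} and \ref{itm:cl-sum-delta} from \Cref{prop:prelims-new}~\ref{itm:useful-w-1}, \Cref{prop:prelims-old}~\ref{itm:useful-missing-1} and \eqref{eqn:alpha}; item \ref{itm:cl-w} by expanding the product after bounding each $\delta(x_j) = O(at^{-(r-1)})$; and item \ref{itm:cl-delta} via the Frankl--R\"odl identity for the pair $1,x$, with left-compression giving $w(N_x(1)) - w(N_1(x)) = (e(x)-e(1))\,t^{-(r-1)}\bigl(1+O(at^{-(r-2)})\bigr)$ and $w(N(1,x)) = \mu_2\bigl(1+O(at^{-(r-2)})\bigr)$, then adding back $\delta(1) = e(1)/(\mu_2 t^{r-1})$ exactly as the paper does. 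The only differences are cosmetic: you bound individual $\delta(x)$ through \Cref{prop:prelims-old}~\ref{itm:useful-w-x} rather than through nonnegativity of $\delta$ together with \ref{itm:cl-sum-delta}, you spell out the counting $|S_x| = e(x)-e(1)$ that the paper leaves implicit, and your signs in the Frankl--R\"odl step are in fact the correct ones (the paper's display has the numerator transposed).
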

		\begin{proof}
			By \Cref{prop:prelims-new} \ref{itm:useful-w-1},  we have $w(1) = t^{-1} + O(at^{-r})$, and, since $e(1) \le ar/t$ (by \Cref{prop:prelims-old} \ref{itm:useful-missing-1}), we have $\delta(1) = O(a t^{-r})$. As $\alpha = w(1) + \delta(1)$, it follows that $\alpha = t^{-1}\left(1 + O(a t^{-(r-1)})\right)$, as required for \ref{itm:cl-alpha}. 
			
			Thus, by \eqref{eqn:alpha}, $\sum_x \delta(x) = O(at^{-(r-1)})$, establishing \ref{itm:cl-sum-delta}. 

			Let $\{x_1, \ldots, x_s\}$ be a subset of $[t]$, where $s \le r$. Then 
			\begin{align*} 
			\begin{split}
				w(x_1) \cdots w(x_s) 
				& = (\alpha - \delta(x_1)) \cdots (\alpha - \delta(x_s)) \\
				& = \alpha^s - \alpha^{s-1} \sum_j \delta(x_j) + O\left( t^{-(s-2)} \left(a t^{-(r-1)}\right)^2 \right) \\
				& = t^{-s}\left(1 + O(a t^{-(r-2)})\right),
			\end{split}
			\end{align*}
			where for the second equality, we used \ref{itm:cl-sum-delta}. This establishes \ref{itm:cl-w}.
			
			For \ref{itm:cl-delta}, we first estimate $w(N(1,x))$ by calculating the weight of all $(r-2)$-tuples in $[t]\setminus \{1,x\}$ and then subtract the weight of the $(r-2)$-tuples $f$ such that $f \cup \{1,x\} \in \overline{G}$:
			\begin{align}\label{est}
				w(N(1,x)) 
				& = \binom{t-2}{r-2} \alpha^{r-2} + O\left(\sum_x \delta(x) \cdot \binom{t-3}{r-3} \alpha^{r-3}\right) + O\left(e(1,x) t^{-(r-2)}\right) \nonumber \\
				& = \mu_2 + O(at^{-(r-1)}),
			\end{align}
			where we use the definition of $\mu_2$ from \eqref{eqn:def-delta-1} and the fact that $e(1,x) \le e(1) \le ar/t$ (\Cref{prop:prelims-old} \ref{itm:useful-missing-1}).

			By \Cref{lem:easy} \ref{itm:frankl}, $\delta(x) - \delta(1) = w(1) - w(x) = \frac{w(N_1(x)) - w(N_x(1))}{w(N(1,x))}$. Since $G$ is left-compressed, it follows from \ref{itm:cl-w} that
			\[
				w(N_1(x)) - w(N_x(1)) 
				= w(N_1(x) \setminus N_x(1))
				= (e(x) - e(1)) \cdot t^{-(r-1)} \cdot \left(1 + O(a t^{-(r-2)})\right)
			\]
			Thus we obtain the required estimate for \ref{itm:cl-delta}:
			\begin{align*}
				\delta(x) 
				& = \frac{1}{w(N(1,x))} \cdot \frac{e(x) - e(1)}{t^{r-1}}\left(1 + O\left(a t^{-(r-2)}\right)\right) + \delta(1) \\
				& = \frac{e(x)}{\mu_2 t^{r-1}}\left(1 + O\left(a t^{-(r-2)}\right)\right). \qedhere
			\end{align*}
		\end{proof}
		
		To estimate $\lambda(G)$, we write $\lambda(G) = w(G) = w(\cl{t}{r}) - w(\overline{G}) = \left(w(\cl{t}{r}) - \lambda(\cl{t}{r}) \right) - w(\overline{G}) + \lambda(\cl{t}{r})$. We shall estimate each of these terms before putting everything together to complete the proof of \Cref{lem:eval-lag}, starting with $w(\overline{G})$.
		\begin{equation} \label{eqn:weight-comp}
		\begin{split}    
			w(\overline{G}) 
			=\, & \sum_{(x_1, \ldots, x_r) \in E(\overline{G})}w(x_1) \cdots w(x_r) \\
			=\, & \sum_{(x_1, \ldots, x_r) \in E(\overline{G})}\left( \alpha^r - \alpha^{r-1} \sum_{j \in [r]}\delta(x_j) + O\left(t^{-(r-2)} \left(a t^{-(r-1)}\right)^2\right) \right) \\
			=\, & \sum_{(x_1, \ldots, x_r) \in E(\overline{G})}\left( \frac{1}{t^r} + \frac{r^2 a}{\mu_2 t^{2r-1}} - \frac{1}{\mu_2t^{2(r-1)}}\sum_{j \in [r]}e(x_j) + O\left(a^2 t^{-3r + 4} \right)\right) \\
			=\, & \frac{a}{t^r} + \frac{r^2a^2}{\mu_2 t^{2r-1}} - \frac{1}{\mu_2 t^{2(r-1)}} \cdot \sum_x e(x)^2 + O\left( a^3 t^{-3r+4} \right).
		\end{split}
		\end{equation}
		Here we used \Cref{cl:eval-alpha-delta} \ref{itm:cl-sum-delta}-\ref{itm:cl-delta} and the following estimate, which uses \Cref{cl:eval-alpha-delta} \ref{itm:cl-alpha} and \ref{itm:cl-sum-delta}.
		\begin{align*}
			\alpha^s 
			=\, & \frac{1}{t^s}\left(1 + \sum_x \delta(x)\right)^s \\
			=\, & \frac{1}{t^s}\left(1 + s \cdot \sum_x \delta(x) + O\left(\left(a t^{-(r-1)}\right)^2\right)\right) \\
			=\, & \frac{1}{t^s}\left(1 + s \cdot \sum_x \frac{e(x)}{\mu_2 t^{r-1}}\right) + O\left(a^2 t^{-2r+3-s}\right) \\
			=\, & \frac{1}{t^s}\left(1 + \frac{s r a}{\mu_2 t^{r-1}}\right) + O\left(a^2 t^{-2r+3-s}\right).
		\end{align*}

		Next, we have the following estimate for $w(\cl{t}{r})$, using \Cref{cl:eval-alpha-delta} \ref{itm:cl-alpha}  and \ref{itm:cl-sum-delta}. 
		\begin{align*}
			w(\cl{t}{r})  
			& = \binom{t}{r}\alpha^r - \binom{t-1}{r-1}\alpha^{r-1} \sum_x \delta(x) + \binom{t-2}{r-2}\alpha^{r-2}\sum_{x<y}\delta(x) \delta(y) + O\left(\frac{a^3}{t^{3(r-1)}}\right).
		\end{align*}
		We now express $\lambda(\cl{t}{r})$ in a way that will make it easy to compare it with $w(G)$. Here we use the fact that the Lagrangian of the clique $\cl{t}{r}$ is the weight of the clique with respect to the uniform weighting, where every vertex has weight $\frac{1}{t} = \alpha - \frac{1}{t}\sum_x \delta(x)$. We have the following, again using \Cref{cl:eval-alpha-delta} \ref{itm:cl-sum-delta}.
		\begin{align*}
			\lambda(\cl{t}{r})
			& = \binom{t}{r}\left(\alpha - \frac{1}{t}\sum_x \delta(x)\right)^r \\
			& = \binom{t}{r}\alpha^r - \binom{t}{r}\cdot \frac{\alpha^{r-1}r}{t}\sum_x \delta(x) + \binom{t}{r}\alpha^{r-2}\binom{r}{2}\!\! \left(\frac{1}{t} \sum_x \delta(x)\right)^2\!\! + O\!\left(\frac{a^3}{t^{3(r-1)}}\right)\\
			& = \binom{t}{r}\alpha^r - \binom{t-1}{r-1}\alpha^{r-1}\! \sum_x\!\delta(x) + \frac{t-1}{2t}\binom{t-2}{r-2}\alpha^{r-2}\!\left(\!\sum_x\!\delta(x)\!\!\right)^2\!\! + O\!\left(\frac{a^3}{t^{3(r-1)}}\right).
		\end{align*}
		Now let us estimate the difference between the Lagrangian and the weight of $\cl{t}{r}$.
		\begin{align} \label{eqn:diff-weight-clique}
		\begin{split}
			& \lambda(\cl{t}{r}) - w(\cl{t}{r}) \\ 
			=\, & \binom{t-2}{r-2}\alpha^{r-2} \cdot \frac{1}{2}\left(\sum_x \delta(x)^2 - \frac{1}{t} \left(\sum_x \delta(x)\right)^2\right) + O\left(\left(a t^{-(r-1)}\right)^3\right) \\
			=\, & \binom{t-2}{r-2}\frac{1}{t^{r-2}} \cdot \frac{1}{2\mu_2^2 t^{2(r-1)}} \left(\sum_x e(x)^2 - \frac{1}{t}\left(\sum_x e(x)\right)^2\right) + O\left(a^3 t^{-3r+4}\right) \\
			=\, & \frac{1}{2\mu_2 t^{2(r-1)}} \left(\sum_x e(x)^2 - \frac{r^2a^2}{t}\right) + O\left(a^3 t^{-3r+4}\right),
		\end{split}
		\end{align}
		where we used \Cref{cl:eval-alpha-delta} \ref{itm:cl-alpha}, \ref{itm:cl-sum-delta} and \ref{itm:cl-delta}.

		By \eqref{eqn:weight-comp} and \eqref{eqn:diff-weight-clique}, we have that $\lambda(\cl{t}{r}) - \lambda(G)$ is
		\begin{align*}
			=\, & \lambda(\cl{t}{r}) - w(\cl{t}{r}) + w(\overline{G}) \\
			=\, & \frac{1}{2\mu_2 t^{2(r-1)}}\left( \sum_x e(x)^2 - \frac{r^2a^2}{t}\right)+ \left(\frac{a}{t^r} + \frac{r^2a^2}{\mu_2 t^{2r-1}} - \frac{1}{\mu_2 t^{2(r-1)}} \cdot \sum_x e(x)^2\right) + O\big(a^3 t^{-3r+4}\big) \\
			=\, & \frac{a}{t^r} - \frac{1}{2\mu_2 t^{2(r-1)}} \cdot \sum_x e(x)^2 + \frac{r^2a^2}{2\mu_2 t^{2r-1}} + O\big(a^3 t^{-3r+4}\big).
		\end{align*}
		\Cref{lem:eval-lag} follows, as $\lambda(\cl{t}{r}) = \mu_0$.
	\end{proof}

\section{Conclusion} \label{sec:conc}
	In this paper we proved that the Lagrangian of $3$-graphs on $m$ edges is maximised by an initial segment of colex, thus confirming the Frankl-F\"uredi conjecture for $r = 3$. For $r \ge 4$, we showed in \cite{Us1} that colex graphs are not always maximisers. Nevertheless, we obtain some information about the structure of a maximiser. In particular, we show that for most values of $m$ with $\binom{t}{r} - \binom{t-i}{r-i} \le m \le \binom{t}{r}$ with $2 \le i \le r-1$ any maximiser of the Lagrangian among $r$-graphs with $m$ edges is isomorphic to a subgraph of $\cl{t}{r}$ whose non-edges all contain a fixed set of size $i$. It seems plausible that this statement should hold for all such $m$. 
	We also show that for every $m$ with $\binom{t}{r} - o(t^{r-2}) \le m \le \binom{t}{r}$, every maximiser is a subgraph of $\cl{t}{r}$ whose complement is close to maximising the sum of degrees squared (among $r$-graphs with $\binom{t}{r}-m$ edges and $t$ vertices). It is plausible that for every maximiser of the Lagrangian, the complement maximises sum of degrees squared precisely, and not just asymptotically, and moreover that this holds for all $m$ with $\binom{t}{r} - \binom{t-2}{r-2} < m \le \binom{t}{r}$ (for other values of $m$ in $[\binom{t-1}{r}, \binom{t}{r}]$ we know that $\cl{t-1}{r}$ maximises the Lagrangian). We note that these problems are likely to be hard, and even if they were proven true, the task of characterising the maximisers of the Lagrangian for all $m$ and $r$ appears out of reach. As evidence, one could consider the question of determining the maximum sum of degrees squared, among $r$-graphs with given size and order, a question which is open for $r \ge 3$.

	\subsection*{Acknowledgements}
		This research was partially completed while the third author was visiting ETH Zurich. The third author would like to thank Benny Sudakov and the London Mathematical Society for making this visit possible. 
		We would like to thank Rob Morris and Imre Leader for their helpful comments and advice.

	\bibliography{lagra}
    \bibliographystyle{amsplain}

\appendix

\section{Proof of Nikiforov's Conjecture}\label{app:nik}
	\begin{thm}
		Let $\binom{t-1}{r} < m \le \binom{t}{r}$ and let $x$ be such that $\binom{x}{r} = m$ (so $x$ need not be an integer). For $t$ sufficiently large, we have $\Lambda(m, r) \le m x^{-r}$, with equality if and only if $x$ is an integer.
	\end{thm}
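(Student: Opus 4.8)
The plan is to work throughout with the function $f(y) := \binom{y}{r}y^{-r}$, so that, recalling $\lambda(\cl{t}{r}) = \binom{t}{r}t^{-r} = f(t) = \mu_0$ from \Cref{lem:easy}, the statement becomes $\Lambda(m,r) \le f(x)$ where $\binom{x}{r} = m$ and $x \in (t-1,t]$. Since $f$ is strictly increasing on $[r-1,\infty)$ and the only integer in $(t-1,t]$ is $t$, it suffices to prove the inequality is strict whenever $x < t$ and an equality when $x = t$. First I would reduce to a convenient maximiser: by \Cref{thm:t-vs} and \Cref{cor:t-vs-compressed}, and since left-compression does not decrease the Lagrangian, we may take $G$ to be a left-compressed subgraph of $\cl{t}{r}$ with a decreasing maximal weighting $w$. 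As $m > \binom{t-1}{r}$, no graph on $t-1$ vertices has $m$ edges, so $w$ has full support $[t]$. Write $a := \binom{t}{r} - m$, so $0 \le a < \binom{t-1}{r-1}$.

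The engine is the identity $\lambda(G) = w(G) = w(\cl{t}{r}) - w(\overline{G})$ together with Maclaurin's inequality, which applied to the $t$ nonnegative weights summing to $1$ gives $w(\cl{t}{r}) = \sum_{e \in \cl{t}{r}}\prod_{v \in e}w(v) \le \binom{t}{r}t^{-r} = f(t)$, with equality exactly when $w$ is uniform. Hence
\[
  f(t) - \lambda(G) \;=\; \big(f(t) - w(\cl{t}{r})\big) + w(\overline{G}),
\]
both summands being nonnegative, and it remains to show their sum is at least $f(t) - f(x)$. A short computation using $\binom{x}{r} = \binom{t}{r} - a$ and $f'(y) = \Theta(y^{-2})$ shows $t - x = \Theta(a\,t^{-(r-1)})$ and hence $f(t) - f(x) = \Theta(a\,t^{-(r+1)})$, a quantity never larger than $\Theta(t^{-2})$.

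For $x$ bounded away from $t-1$ this has ample room. Every non-edge of $G$ has weight at least $w(t)^r$, so $w(\overline{G}) \ge a\,w(t)^r$; as long as $w(t) = \Omega(1/t)$ this gives $w(\overline{G}) = \Omega(a\,t^{-r})$, beating $f(t)-f(x) = \Theta(a\,t^{-(r+1)})$ by a factor of order $t$, strictly so once $a \ge 1$. By \Cref{prop:prelims-new}~\ref{itm:useful-w-t-large} we have $w(t) = \Omega(1/t)$ whenever $a < \binom{t-2}{r-2}$, and a Frankl-type balancing argument (via \Cref{lem:easy}~\ref{itm:frankl}, using that $t$ still lies in a positive fraction of all $r$-sets) extends $w(t) = \Omega(1/t)$ to all $a \le (1-\varepsilon)\binom{t-1}{r-1}$. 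Alternatively, for $a = o(t^{r-2})$ one may simply invoke the explicit evaluation \Cref{lem:eval-lag} with the crude bound $\sum_x e(x)^2 = O(a^2)$, obtaining $\lambda(G) = f(t) - a\,t^{-r} + O(a^2 t^{-2(r-1)}) < f(x)$ directly; this is precisely the regime handled by the methods of \Cref{thm:range}.

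The main obstacle is the tight regime $x \to (t-1)^+$, i.e.\ $a \to \binom{t-1}{r-1}$, where $w(t) \to 0$ and the crude bound $w(\overline{G}) \ge a\,w(t)^r$ degenerates. Here I would instead exploit the Maclaurin deficiency. Writing $w(v) = \tfrac{1}{t} + \eta_v$ with $\sum_v \eta_v = 0$, a second-order expansion gives $f(t) - w(\cl{t}{r}) = \tfrac{\mu_2}{2}\sum_v \eta_v^2\,(1 + o(1))$; since $w$ is decreasing with $w(t) \to 0$, the sum is dominated by $\eta_t^2 \to 1/t^2$, so $f(t) - w(\cl{t}{r}) \to \tfrac{1}{2(r-2)!\,t^2}$, which matches $f(t) - f(t-1) \approx f'(t) = \tfrac{1}{2(r-2)!\,t^2}$ \emph{exactly}. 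The delicate point is that at the endpoint $x = t-1$ the target inequality is an equality ($w$ is uniform on $[t-1]$ and $G = \cl{t-1}{r}$), so one must track constants: setting $m' := \binom{t-1}{r-1} - a$ (the number of edges of $G$ through $t$, a small parameter) and expanding $f(t) - w(\cl{t}{r})$, $w(\overline{G})$ and $f(t) - f(x)$ to second order in $m'$ about the uniform-on-$[t-1]$ weighting, I would show the combined lower bound dominates with the correct sign, with equality only in the limit $m' \to \binom{t-1}{r-1}$, i.e.\ $x = t-1$, which is excluded from $(t-1,t]$. This matching-constants computation near $x = t-1$ is where the real work lies; combined with the bulk estimate it yields $\lambda(G) < f(x)$ for every $x \in (t-1,t)$ and $\lambda(G) = f(t)$ when $x = t$, which is the claim.
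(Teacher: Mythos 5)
Your bulk-regime argument (for $a := \binom{t}{r} - m < \binom{t-2}{r-2}$) is essentially the paper's proof: both compare $m x^{-r} \ge \lambda(\cl{t}{r}) - \Theta(a t^{-(r+1)})$ with $\lambda(G) = w(\cl{t}{r}) - w(\overline{G}) \le \lambda(\cl{t}{r}) - a\,w(t)^r$ and invoke $w(t) = \Omega(1/t)$ from \Cref{prop:prelims-new}. But your handling of the tight regime $\binom{t-2}{r-2} \le a \le \binom{t-1}{r-1}$ has a genuine gap, and part of it is simply false. You claim that a Frankl-type balancing argument extends $w(t) = \Omega(1/t)$ to all $a \le (1-\eps)\binom{t-1}{r-1}$. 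It cannot: by \Cref{thm:main1}, for $\binom{t-1}{r} \le m \le \binom{t}{r} - \binom{t-2}{r-2}$ the maximiser is the colex graph, whose Lagrangian equals $\lambda(\cl{t-1}{r})$ and is attained by the uniform weighting on $[t-1]$ --- so the maximal weighting has $w(t) = 0$ throughout the interior of that range. (Relatedly, your inference ``$m > \binom{t-1}{r}$, hence $w$ has full support $[t]$'' is unjustified: a maximal weighting may vanish on vertices of positive degree, and here it does.) Your fallback, the second-order ``matching-constants'' expansion of the Maclaurin deficiency near $x = t-1$, is exactly the hard part of your plan and is left unexecuted; moreover the structural tools you would need there (\Cref{cor:t-vs-compressed}, \Cref{prop:prelims-new}) are only proved for $a < \binom{t-2}{r-2}$, so they are not available in the regime where you would apply them. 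As it stands the constants in $f(t) - w(\cl{t}{r})$ and $f(t) - f(x)$ match to leading order at $x = t-1$, so the claimed strict inequality for $x \in (t-1, t)$ would hinge on precisely the lower-order terms you have not computed.

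The paper avoids this entirely with a one-line reduction that your proposal misses: if $a \ge \binom{t-2}{r-2}$, then \Cref{thm:main1} gives $\lambda(G) \le \lambda(\cl{t-1}{r}) = \binom{t-1}{r}(t-1)^{-r} = f(t-1) \le f(x)$, using only the monotonicity of $f(y) = \binom{y}{r}y^{-r}$ and $x \ge t-1$; no expansion near the endpoint is needed, and equality is ruled out since $m > \binom{t-1}{r}$ forces $x > t-1$. In other words, the delicate regime is exactly where the companion paper's colex result already pins down $\Lambda(m,r)$ exactly, and quoting it replaces your entire endpoint analysis. With that substitution your argument becomes the paper's proof; without it, the proposal is incomplete.
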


	\begin{proof}
		Let $G$ be a maximiser of the Lagrangian among $r$-graphs with $m$ edges. Our aim is to show that $\lambda(G) \le m x^{-r}$. By \Cref{thm:t-vs}, we may assume that the vertex set of $G$ is $[t]$. Write $m = \binom{t}{r} - a$. By \Cref{thm:main1}, if $a \ge \binom{t-2}{r-2}$, then 
		\begin{equation*}
			\lambda(G) \le \lambda(\cl{t-1}{r}) \le \binom{t-1}{r} \left( \frac{1}{t-1} \right)^r \le m x^{-r},
		\end{equation*}
		as $m \ge \binom{t-1}{r}$ and so $x \ge t-1$. We may thus assume that $a < \binom{t-2}{r-2}$. Again by \Cref{thm:main1}, we may assume that $a > 0$.

		Now, let us look more closely at the expression $m x^{-r}$. Write $x = t - \eps$, then clearly $0 \le \eps < 1$. Let $f$ be the function defined by $f(s) = \binom{s}{r}$ for $s \ge r$. Then
		\begin{align} \label{eqn:a-eps}
			a 
			= \binom{t}{r} - m 
			= \binom{t}{r} - \binom{x}{r} 
			= f(t) - f(x) 
			\le (t - x) f'(t) 
			\le \eps \binom{t}{r-1}.
		\end{align}
		Here we used the fact that $f'(s)$ is increasing and that $f'(t) \le \binom{t}{r-1}$. We now obtain a lower bound on $m x^{-r}$. 
		\begin{align*}
			m x^{-r} 
			& = m t^{-r} \left( \frac{1}{1 - \eps/t} \right)^r \\
			& \ge m t^{-r} ( 1 + r \eps/t) \\
			& \ge m t^{-r} \left( 1 + \frac{ra}{\binom{t}{r-1} t} \right) \\
			& = \binom{t}{r}t^{-r} - a t^{-r} \left( 1 - \frac{\binom{t}{r} r}{\binom{t}{r-1}t} + \frac{ra}{\binom{t}{r-1}t} \right) \\
			& \ge \lambda(\cl{t}{r}) - a \cdot t^{-(r+1)} (r-1) (1 - r/t),
		\end{align*}
		where we used \eqref{eqn:a-eps} and the upper bound $a \le \binom{t-2}{r-2}$.
		In contrast, recall that by \Cref{prop:prelims-new} \ref{itm:useful-w-t-large} $w(t) = \Omega(t^{-1})$, hence the weight of every non-edge of $G$ is at least $\Omega(t^{-r})$. We thus have
		\begin{align*}
			w(G) = w(\cl{t}{r}) - w(\overline{G}) \le \lambda(\cl{t}{r}) - \Omega(a t^{-r}) \le m x^{-r},
		\end{align*}
		as required. Note that we get equality if and only if $a = 0$, that is, when $x = t$.
	\end{proof}

\end{document}